\documentclass[onecolumn,secnumarabic,nobibnotes,superscriptaddress,aps]{revtex4-2}

\usepackage{times}
\usepackage{color, xcolor, colortbl}
\usepackage{graphicx}
\usepackage{epstopdf}
\ifpdf
\DeclareGraphicsExtensions{.eps,.pdf,.png,.jpg}
\else
\DeclareGraphicsExtensions{.eps}
\fi

\usepackage{geometry}
\usepackage{amsmath,amssymb, amsthm}
\usepackage{algorithm}
\usepackage{algorithmic}
\usepackage{bm}
\usepackage[caption=false]{subfig}
\usepackage[title]{appendix}
\usepackage{multirow}
\usepackage{braket}
\usepackage[english]{babel}
\usepackage{hyperref}
\usepackage[capitalize]{cleveref}
\usepackage{natbib}
\setcitestyle{square,numbers}
\usepackage{adjustbox}
\usepackage{xspace}
\usepackage{comment}
\usepackage{booktabs}
\usepackage{makecell}
\usepackage{threeparttable}

\newcommand{\bvec}[1]{\mathbf{#1}}

\newcommand{\valpha}{ {\bm{\alpha}} }
\newcommand{\vbeta}{ {\bm{\beta}} }

\newcommand{\vk}{\bvec{k}}

\newcommand{\vn}{\bvec{n}}

\newcommand{\vq}{\bvec{q}}
\newcommand{\vrr}{\bvec{r}}     

\newcommand{\vv}{\bvec{v}}

\newcommand{\vx}{\bvec{x}}
\newcommand{\vy}{\bvec{y}}
\newcommand{\vz}{\bvec{z}}

\newcommand{\vG}{\bvec{G}}

\newcommand{\vR}{\bvec{R}}

\newcommand{\diag}{\operatorname{diag}}

\newcommand{\I}{\mathrm{i}}

\newcommand{\wt}[1]{\widetilde{#1}}

\newcommand{\abs}[1]{\left\lvert#1\right\rvert}

\newcommand{\ud}{\,\mathrm{d}}
\newcommand{\Or}{\mathcal{O}}

\newcommand{\RR}{\mathbb{R}}

\newcommand{\ZZ}{\mathbb{Z}}

\newtheorem{thm}{\protect\theoremname}
\theoremstyle{plain}
\newtheorem{lem}[thm]{\protect\lemmaname}
\theoremstyle{plain}
\newtheorem{rem}[thm]{\protect\remarkname}
\theoremstyle{plain}

\theoremstyle{plain}
\newtheorem{cor}[thm]{\protect\corollaryname}

\newtheorem{defn}[thm]{\protect\definitionname}

\providecommand{\definitionname}{Definition}
\providecommand{\assumptionname}{Assumption}
\providecommand{\corollaryname}{Corollary}
\providecommand{\lemmaname}{Lemma}
\providecommand{\propositionname}{Proposition}
\providecommand{\remarkname}{Remark}
\providecommand{\theoremname}{Theorem}
\providecommand{\problemname}{Problem}
\crefname{lem}{Lemma}{Lemmas} 

\numberwithin{figure}{section}
\numberwithin{table}{section}

\newcommand{\xsum}{\mathop{\sum\nolimits'}}

\crefname{equation}{Eq.}{Eqs.}
\crefformat{equation}{Eq.~#2(#1)#3}

\newcommand{\XX}[1]{\textcolor{blue}{[XX: #1]}}

\newcommand{\DeptMath}{Department of Mathematics, University of California, Berkeley, CA 94720, USA}
\newcommand{\LBLMath}{Applied Mathematics and Computational Research Division, Lawrence Berkeley National Laboratory, Berkeley, CA 94720, USA}

\begin{document}

\title{Finite-size effects in periodic coupled cluster calculations}
\author{Xin Xing}
\email{xxing@berkeley.edu}
\affiliation{\DeptMath}
\author{Lin Lin}
\email{linlin@math.berkeley.edu}
\affiliation{\DeptMath}
\affiliation{\LBLMath}

\begin{abstract}
We provide the first rigorous study of the finite-size error in the simplest and representative coupled cluster theory, namely the coupled cluster doubles (CCD) 
theory, for gapped periodic systems. 
Assuming that the CCD equations are solved using exact Hartree-Fock orbitals and orbital energies, we prove that the convergence rate of finite-size error scales as $\mathcal{O}(N_\mathbf{k}^{-\frac13})$, where $N_{\mathbf{k}}$ is the number of discretization point in the Brillouin zone and characterizes the system size.
Our analysis shows that the dominant error lies in the coupled cluster amplitude calculation, and the convergence of the finite-size error in energy calculations can be boosted to $\mathcal{O}(N_\mathbf{k}^{-1})$ with accurate amplitudes. 
This also provides the first proof of the scaling of the finite-size error in the third order M{\o}ller-Plesset perturbation theory (MP3) for periodic systems.
\end{abstract}

\maketitle

\tableofcontents

\maketitle 
\newpage

\section{Introduction}

In 1990, Kenneth Wilson wrote ``\textit{ab initio} quantum chemistry is an emerging computational area that is fifty years ahead of lattice gauge theory, a principal competitor for supercomputer time, and a rich source of new ideas and new approaches to the computation of many fermion systems.''~\cite{Wilson1990}
Coupled cluster (CC) theory is one of the most advanced \textit{ab initio} quantum chemistry methods, and the coupled cluster singles and doubles with perturbative triples (CCSD(T)) is often referred to as the ``gold standard'' of molecular quantum chemistry.
Compared to the success in molecular systems, applications of the CC theory to periodic systems (i.e., bulk solids and other extended systems)~\cite{HirataPodeszwaTobitaEtAl2004,BartlettMusial2007,ZhangGruneis2019} have been much more limited. Nonetheless, thanks to the combined improvements of computational power and numerical algorithms in the past few years,  periodic CC calculations have been increasingly performed routinely for ground state and excited state properties of condensed matter systems~\cite{ZhangGruneis2019, GruberLiaoTsatsoulisEtAl2018,McClainSunChanEtAl2017}. 

Unlike CC calculations for molecular systems, properties of periodic systems need to be evaluated in the thermodynamic limit (TDL). The TDL can be approached by employing a large supercell containing $N_\vk$ unit cells, but this approach does not take advantage of the translational symmetry and is thus increasingly inefficient as the system size grows. A more efficient approach is to discretize the Brillouin zone (BZ) of one unit cell into $N_\vk$ grid points, and the most widely used discretization method is a uniform grid called the Monkhorst-Pack grid~\cite{MonkhorstPack1976}.  The result in the TDL is given by the limit $N_{\vk}\to \infty$.  Due to the singularity caused by the long range Coulomb interaction, the convergence of the energy and other physical properties towards the TDL is often slow and follows a low-order power law. 
It is therefore important to understand the precise scaling of the finite-size effects in periodic CC calculations. Finite-size scaling analysis is the foundation for the power law extrapolation to calculate properties in the TDL, as well as  for more advanced finite-size error correction schemes ~\cite{ChiesaCeperleyMartinEtAl2006,LiaoGrueneis2016,GruberLiaoTsatsoulisEtAl2018,MihmMcIsaacShepherd2019}.

To our knowledge, this work is the first mathematical study of finite-size errors of periodic CC theories. We interpret the finite-size error as numerical quadrature error of a trapezoidal rule applied to certain singular integrals. Thus the main body of this work is (1) to analyze the singularity structure of quantities in CC theories, and (2) to bound the trapezoidal quadrature errors associated with these singular integrands. At first glace, the task (2) seems to be a classical problem in numerical analysis. We therefore emphasize that standard quadrature error analysis for smooth integrands (see e.g., ~\cite{Trefethen2019Approximation}) may produce an overly pessimistic upper bound of the finite-size error that does not decrease at all as $N_{\vk}$ increases. 
Our  quadrature error analysis adapts the Poisson summation formula in a new setting and is related to a recently developed trapezoidal rule-based quadrature analysis for certain singular integrals \cite{Izzo2022}. 
This provides tighter estimates and is more generally applicable than a previous quadrature analysis based on the partial Euler-Maclaurin formula for finite-size error studies \cite{XingLiLin2021}.

To simplify the presentation and the analysis, we focus on ground-state energy calculations in three-dimensional (3D) insulating systems using the simplest and representative CC theory, i.e., the coupled cluster doubles (CCD) theory. The Brillouin zone is discretized using a Monkhorst-Pack grid of size $N_{\vk}^{\frac13}\times N_{\vk}^{\frac13} \times N_{\vk}^{\frac13}$. The core of CC theories is the amplitude equation, which is a nonlinear system often solved by iterative methods. In particular, when the amplitude equation is solved using a fixed point iteration with a zero initial guess, it can systematically generate a set of perturbative terms as in the M{\o}ller-Plesset perturbation theory~\cite{ShavittBartlett2009} that can be represented using Feynman diagrams.
In practice, the number of iterations needs to be truncated at some number $n$, and we refer to the resulting scheme as CCD($n$).
We assume that the amplitude equations are solved with exact (or in practice, sufficiently accurate) Hartree-Fock orbitals and orbital energies in the TDL (see~\cref{appendix:mp3ccd}).
The main result of this paper is that under such assumptions, the convergence rate of CCD$(n)$ (for any fixed $n$) is $\Or\left(N_{\vk}^{-\frac13}\right)$ (\cref{thm:error_ccd}). 

It is worth noting that previous numerical studies~\cite{LiaoGrueneis2016,GruberLiaoTsatsoulisEtAl2018} have suggested that under different assumptions, the finite-size error of the CCD energy calculation can scale as  $\Or(N_\vk^{-1})$. 
A possible origin of this discrepancy will be discussed at the end of the manuscript in \cref{sec:discuss}. 
The restriction of discussion to CCD$(n)$ is a technical one. 
Under additional assumptions, the same convergence rate can be established for the converged solution of CCD with $n\rightarrow\infty$ (\cref{thm:error_ccd_converge}). 
Many finite-size error correction methods work under the assumption that the error in the CC double amplitude is small, and the finite-size error mainly comes from the evaluation of the total energy using the CC double amplitude~\cite{GruberLiaoTsatsoulisEtAl2018}. 
Our analysis reveals that the opposite is true in general: most of the finite-size error is in fact in the CC double amplitude, which is responsible for the $\Or\left(N_{\vk}^{-\frac13}\right)$ convergence rate. On the other hand, with accurate CC double amplitudes, the convergence rate of energy calculations could be improved to $\Or\left(N_{\vk}^{-1}\right)$ without any further finite-size corrections. 

The finite-size error analysis of many quantum chemistry methods can be reduced to certain quadrature error analysis. This perspective has recently provided the first unified finite-size error analysis for the periodic Hartree-Fock and the second order M{\o}ller-Plesset perturbation theory (MP2)~\cite{XingLiLin2021}, and similar analysis can be carried out for more complex theories such as the random phase approximation (RPA) and the second order screened exchange (SOSEX)~\cite{XingLin2022}. 
The commonality of these theories (beyond the Hartree-Fock level) is that they only include certain perturbative terms (referred to as the ``particle-hole'' Feynman diagrams, see the main text for the explanation), and the associated integrand singularities are relatively weak. As a result, for ground-state energy calculations in 3D insulating systems,  
the finite-size errors of MP2, RPA, SOSEX all scale as $\Or\left(N_{\vk}^{-1}\right)$. 
Starting from the third order M{\o}ller-Plesset perturbation theory (MP3), other perturbative terms (referred to as ``particle-particle'' and ``hole-hole'' diagrams) must be taken into account, and the singularities in these terms are much stronger. Our new techniques can be used to analyze the quadrature error associated with these terms.
Since the MP3 diagrams form a subset of the CCD diagrams, our result also gives the first proof that the finite-size error of the MP3 energy scales as $\Or\left(N_{\vk}^{-\frac13}\right)$ (\cref{thm:error_mp3}). 

\section{Background}
Denote a unit cell as $\Omega$ and its Bravais lattice as $\mathbb{L}$.  
Denote the corresponding reciprocal Brillouin zone and lattice as $\Omega^*$ and $\mathbb{L}^*$. 
Consider a mean-field (Hartree-Fock) calculation with a Monkhorst-Pack mesh $\mathcal{K}$ which is a uniform mesh of size $N_\vk$ discretizing $\Omega^*$. 
Each molecular orbital from the calculation can be represented as
\begin{equation*}
        \psi_{n\vk}(\vrr) = \dfrac{1}{\sqrt{N_\vk}} e^{\I \vk\cdot\vrr} u_{n\vk}(\vrr)=
        \frac{1}{\abs{\Omega}\sqrt{N_{\vk}}} \sum_{\mathbf{G}\in\mathbb{L}^*} \hat{u}_{n\vk}(\mathbf{G}) e^{\I (\vk+\mathbf{G}) \cdot \mathbf{r}},
\end{equation*}
where $n$ is a band index and $u_{n\vk}$ is periodic with respect to the unit cell. 
As is common in the chemistry literature, $n\in\{i,j\}$ refers to an occupied (or ``hole'') orbital and $n\in\{a,b\}$ refers to an unoccupied 
(or ``particle'') orbital. 
Throughout this paper, we use the following \textit{normalized} electron repulsion integral (ERI):
\begin{equation}\label{eqn:eri}
        \braket{n_1\vk_1,n_2\vk_2|n_3\vk_3,n_4\vk_4}
        = 
        \frac{4\pi}{\abs{\Omega}} \xsum_{\vG\in\mathbb{L}^*}
        \frac{1}{\abs{\vq+\vG}^2}  
        \hat{\varrho}_{n_1\vk_1,n_3(\vk_1 + \vq)}(\mathbf{G}) \hat{\varrho}_{n_2\vk_2,n_4(\vk_2 - \vq)}(-\mathbf{G}),
\end{equation}
where $\vq = \vk_3 - \vk_1$ is the momentum transfer vector, the crystal momentum conservation $\vk_1 + \vk_2 -\vk_3 - \vk_4 \in \mathbb{L}^*$ is assumed implicitly, 
and $\hat{\varrho}_{n'\vk', n\vk}(\vG) = \braket{\psi_{n'\vk'} | e^{\I(\vk' - \vk - \vG)\cdot \vrr} | \psi_{n\vk}}$ is the pair product.
This normalized ERI (and the normalized amplitude below) is used mainly for better illustration of the connection between various calculations in 
the finite and the TDL cases, and will introduce extra $\frac{1}{N_\vk}$ factors in the energy and amplitude formulations compared to the standard ones in the literature. 

In this paper, we consider insulating systems with a direct gap between occupied and virtual bands.
In addition, we assume that the orbitals and orbital energies can be exactly evaluated at any $\vk$ point and the virtual bands are truncated (i.e., only a finite 
number of virtual bands are included in the calculations) which corresponds to calculations using a fixed basis set. 
Lastly, we assume that with a proper gauge, both $\psi_{n\vk}(\vrr)$ and $\varepsilon_{n\vk}$ are smooth and periodic with respect to $\vk\in\Omega^*$. For systems free of topological obstructions~\cite{BrouderPanatiCalandraEtAl2007,MonacoPanatiPisanteEtAl2018}, these conditions may be replaced by weaker conditions using techniques based on Green's functions. However, such a treatment can introduce a considerable amount of overhead to the presentation, and therefore we adopt the stronger but simpler assumptions on the orbitals and orbital energies as stated above. 

All the finite order energy diagrams in CCD share the common  form
\begin{align}
        E_\#^{N_\vk} 
        & = \frac{1}{N_{\vk}^3}
        \sum_{\vk_i\vk_j \vk_a\in\mathcal{K}}\sum_{i j a b}
        \left(
        2\braket{i\vk_i,j \vk_j |a\vk_a, b\vk_b}
        -
        \braket{i\vk_i,j \vk_j |b\vk_b, a\vk_a}
        \right)
        T_{ijab}^{\#,N_\vk}(\vk_i, \vk_j, \vk_a)
        \nonumber \\
        & = \frac{1}{N_{\vk}^3}
        \sum_{\vk_i\vk_j \vk_a\in\mathcal{K}}\sum_{i j a b}
        W_{ijab}(\vk_i, \vk_j, \vk_a)
        T_{ijab}^{\#,N_\vk}(\vk_i, \vk_j, \vk_a),
        \label{eqn:energy_finite}
\end{align}
where $W_{ijab}(\vk_i, \vk_j, \vk_a)$ is the antisymmetrized ERI and the \textit{normalized} double amplitude $T_{ijab}^{\#,N_\vk}(\vk_i, \vk_j, \vk_a)$ is different in each term (annotated by $\#$).
For example, the double amplitudes in MP2 and MP3-4h2p (reads ``4 hole 2 particle'', because $i,j,k,l$ are hole indices and $a,b$ are particle indices) energies are defined as 
\begin{align}
        T_{ijab}^{\text{MP2},N_\vk}(\vk_i, \vk_j, \vk_a) 
        & = \dfrac{1}{\varepsilon_{i\vk_i, j\vk_j}^{a\vk_a, b\vk_b}}\braket{a\vk_a, b\vk_b | i\vk_i, j\vk_j},
        \label{eqn:amplitude_mp2}
        \\
        T_{ijab}^{\text{MP3-4h2p},N_\vk}(\vk_i, \vk_j, \vk_a)
        &       =       
        \dfrac{1}{N_\vk}\sum_{\vk_k\in\mathcal{K}}
        \sum_{kl}
        \dfrac{1}{\varepsilon_{i\vk_i,j\vk_j}^{a\vk_a,b\vk_b}}
        \braket{k\vk_k, l\vk_l| i\vk_i,j \vk_j}
        \dfrac{\braket{a\vk_a, b\vk_b|k\vk_k,l \vk_l}}{\varepsilon_{k\vk_k,l\vk_l}^{a\vk_a,b\vk_b}}. 
        \label{eqn:amplitude_mp3_4h2p}
\end{align}
Unlike the MP2 energy which only involves interactions between particle-hole pairs $(i\vk_i,a\vk_a)$ and $(j\vk_j,b\vk_b)$, the MP3-4h2p energy 
involves interactions between hole-hole pairs $(i\vk_i,k\vk_k)$ and $(j\vk_j,l\vk_l)$. 
It is such interaction terms involving particle-particle or hole-hole pairs in MP3 and CCD that lead to considerable difficulties 
in the finite-size error analysis, compared to the existing analysis for MP2.

The CCD correlation energy is also defined as \cref{eqn:energy_finite} with the double amplitude being an infinite sum of the double amplitudes 
from a subset of finite order perturbation energies. 
Specifically, the double amplitude in CCD calculation is defined as the solution of a nonlinear amplitude equation which consists of constant, 
linear, and quadratic terms. The exact definition of the amplitude equation is provided in \cref{eqn:amplitude_ccd}.

A common practice in CCD calculation is to solve the amplitude equation approximately using fixed point iterations with a zero initial guess which
is equivalent to a quasi-Newton method \cite{Schneider2009}.
After $n$ iterations, we refer to the approximate amplitude as the CCD$(n)$ amplitude and the resulting approximate energy as the CCD$(n)$
energy.  
At the $(n+1)$th iteration, plugging the CCD$(n)$ amplitude from the previous iteration into the right hand side of the amplitude equation in 
\cref{eqn:amplitude_ccd} gives the CCD$(n+1)$ amplitude. 
If the mean-field calculation gives a good reference wavefunction and the direct gap between occupied and virtual bands is sufficiently large, 
this iteration converges and CCD$(n)$ converges to CCD \cite{Schneider2009}. 
The CCD($n$) scheme is directly related to the M{\o}ller-Plesset perturbation theory~\cite{ShavittBartlett2009}. 
For example, MP2 can be identified with CCD$(1)$, and CCD$(2)$ contains all terms in MP2 and MP3, as well as a subset of terms in MP4. 

In the TDL with $\mathcal{K}$ converging to $\Omega^*$, the correlation energy in \cref{eqn:energy_finite} converges to an integral as 
\begin{equation}\label{eqn:energy_tdl}
        E_\#^\text{TDL} =       \frac{1}{|\Omega^*|^3}
        \int_{(\Omega^*)^{\times 3}}\ud\vk_i \ud\vk_j \ud\vk_a
        \sum_{i j a b}
        W_{ijab}(\vk_i, \vk_j, \vk_a)
        T_{ijab}^{\#,\text{TDL}}(\vk_i, \vk_j, \vk_a),
\end{equation}
where we note that the double amplitude is converged as well (indicated by its superscript ``TDL''). 
For each finite order perturbation energy in CCD except MP2, its double amplitude also converges to an integral in the TDL. 
For example, the double amplitude \cref{eqn:amplitude_mp3_4h2p} in MP3-4h2p term converges to 
\begin{align}
        \label{eqn:amplitude_mp3_4h2p_tdl}
        T_{ijab}^{\text{MP3-4h2p},\text{TDL}}(\vk_i, \vk_j, \vk_a)
        &       =       
        \dfrac{1}{|\Omega^*|}\int_{\Omega^*}\ud\vk_k \sum_{kl}
        \dfrac{1}{\varepsilon_{i\vk_i,j\vk_j}^{a\vk_a,b\vk_b}}
        \braket{k\vk_k, l\vk_l| i\vk_i,j \vk_j}
        \dfrac{\braket{a\vk_a, b\vk_b|k\vk_k,l \vk_l}}{\varepsilon_{k\vk_k,l\vk_l}^{a\vk_a,b\vk_b}}. 
\end{align}
Since CCD$(n)$ consists of a finite number of perturbation terms, its double amplitude converges to a sum of many integral terms in 
the TDL. For more background information, we refer readers to \cref{appendix:mp3ccd}.  

\section{Statement of main results}\label{sec:main_result}
Comparing $E_\#^{N_\vk}$ in \cref{eqn:energy_finite} and $E_\#^\text{TDL}$ in \cref{eqn:energy_tdl}, we could split the finite-size error of any term in CCD 
calculation as 
\begin{align}
        &E_\#^\text{TDL} - E_\#^{N_\vk} 
         = \left(\frac{1}{|\Omega^*|^3}\int_{(\Omega^*)^{\times 3}}\ud\vk_i\ud\vk_j\ud\vk_a -       \frac{1}{N_{\vk}^3}
        \sum_{\vk_i\vk_j \vk_a\in\mathcal{K}}\right) \sum_{ijab} 
        \left(
        W_{ijab}T_{ijab}^{\#,\text{TDL}}\right)(\vk_i,\vk_j,\vk_a)
        \nonumber \\
        &\hspace{5em} +         \frac{1}{N_{\vk}^3}
        \sum_{\vk_i\vk_j \vk_a\in\mathcal{K}} \sum_{ijab}W_{ijab}(\vk_i,\vk_j,\vk_a)
        \left(
        T^{\#,\text{TDL}}_{ijab}(\vk_i, \vk_j, \vk_a) - T^{\#, N_\vk}_{ijab}(\vk_i, \vk_j, \vk_a)
        \right),
        \label{eqn:error_splitting}
\end{align}
where the two parts can be interpreted respectively as the finite-size errors in the \textbf{energy calculation using exact amplitudes} and in the \textbf{amplitude calculations}.
Analyzing these two parts separately, we provide a rigorous estimate of the finite-size error in CCD$(n)$ calculations.

\begin{thm}[Error of CCD($n$)]\label{thm:error_ccd}
        In CCD$(n)$ calculation with any $n>0$, the finite-size errors in energy calculation using exact amplitudes and in amplitude calculations
        can be estimated as 
        \begin{align*}
                \left|
                \left(\frac{1}{|\Omega^*|^3}\int_{(\Omega^*)^{\times 3}}\ud\vk_i\ud\vk_j\ud\vk_a -       \frac{1}{N_{\vk}^3}
                \sum_{\vk_i\vk_j \vk_a\in\mathcal{K}}\right) \sum_{ijab} 
                \left(
                W_{ijab}T_{ijab}^{\text{CCD}(n),\text{TDL}}\right)(\vk_i,\vk_j,\vk_a)
                \right|
                & = \Or(N_\vk^{-1}),
                \\
                \max_{ijab, \vk_i\vk_j\vk_a\in\mathcal{K}}
                \left|
                T^{{\text{CCD}(n)}, \text{TDL}}_{ijab}(\vk_i, \vk_j, \vk_a) - T^{{\text{CCD}(n)}, N_\vk}_{ijab}(\vk_i, \vk_j, \vk_a)
                \right|
                & = \Or(N_\vk^{-\frac13}).
        \end{align*}
        Combining these two estimates with \cref{eqn:error_splitting}, the overall finite-size error in CCD($n$) energy calculation is 
        \[
        \left|E_{\text{CCD}(n)}^\text{TDL} -    E_{\text{CCD}(n)}^{N_\vk}\right| = \Or(N_\vk^{-\frac13}).
        \]
\end{thm}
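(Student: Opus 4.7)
The plan is to bound the two contributions on the right-hand side of \cref{eqn:error_splitting} separately, viewing each as a trapezoidal-rule quadrature error for an integrand carrying Coulomb-type singularities on $(\Omega^*)^{\times 3}$. The underlying quadrature tool, adapted from the Poisson summation approach alluded to in the introduction, distinguishes two regimes according to how the $1/\abs{\vq+\vG}^2$ Coulomb factor is modulated by the pair products $\hat{\varrho}$. When the pair product vanishes at $\vq=\vG=0$, as happens for a particle--hole pair $(i,a)$ at $\vk_i=\vk_a$ by the orthogonality $\hat{\varrho}_{i\vk,a\vk}(0)=\braket{\psi_{i\vk}|\psi_{a\vk}}=0$, the effective singularity is integrable and the quadrature error in $3$D decays as $\Or(N_{\vk}^{-1})$. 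When the pair product does not vanish, as for hole--hole or particle--particle pairs where $\hat{\varrho}_{n\vk,n'\vk}(0)=\delta_{nn'}\neq 0$, the $1/\abs{\vq}^2$ singularity survives and only $\Or(N_{\vk}^{-1/3})$ is attainable.

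For the first (energy-with-exact-amplitudes) contribution, the outer integrand $\sum_{ijab}W_{ijab}\,T^{\text{CCD}(n),\text{TDL}}_{ijab}$ has its singularities in the quadrature variables $(\vk_i,\vk_j,\vk_a)$ arising only from the particle--hole momentum transfer $\vq=\vk_a-\vk_i$ of $W_{ijab}$, which is regularized by the orthogonality argument above. Granted sufficient smoothness of $T^{\text{CCD}(n),\text{TDL}}$ in its $\vk$-arguments away from the singular set (which must be propagated through the iterations alongside the error estimate itself), the first regime of the quadrature tool yields the $\Or(N_{\vk}^{-1})$ bound. This piece parallels the MP2 analysis in \cite{XingLiLin2021}.

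For the second (amplitude) contribution, I would argue by induction on $n$. The base case is $n=1$: the MP2 amplitude \cref{eqn:amplitude_mp2} is pointwise explicit and involves no $\vk$-quadrature, so $T^{\text{MP2},\text{TDL}}$ and $T^{\text{MP2},N_{\vk}}$ coincide and the amplitude error is identically zero. For the inductive step, one sweep of the CCD fixed-point iteration expresses $T^{\text{CCD}(n+1)}$ as a finite sum of terms of the form ``outer ERI and denominator'' times ``inner $\vk$-sums of ERIs and at most two factors of $T^{\text{CCD}(n)}$''. The difference $T^{\text{CCD}(n+1),\text{TDL}}-T^{\text{CCD}(n+1),N_{\vk}}$ then splits into (i) the quadrature error of each inner $\vk$-integral with $T^{\text{CCD}(n),\text{TDL}}$ held fixed, and (ii) the error propagated from the inductive bound on $T^{\text{CCD}(n),\text{TDL}}-T^{\text{CCD}(n),N_{\vk}}$ through the (multi)linear structure of the inner sums. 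Piece (i) is the decisive one: the inner summations couple either two hole orbitals (as in the MP3-4h2p example of \cref{eqn:amplitude_mp3_4h2p}) or two particle orbitals, so the Coulomb singularity is not regularized and the second regime of the quadrature tool contributes $\Or(N_{\vk}^{-1/3})$. Piece (ii) is absorbed by the uniform inductive hypothesis together with uniform boundedness of the inner Riemann sums, which in turn rests on the direct-gap assumption bounding the energy denominators away from zero. Taking the maximum over $(ijab,\vk_i\vk_j\vk_a\in\mathcal{K})$ preserves the $\Or(N_{\vk}^{-1/3})$ rate through the finitely many iterations.

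Combining the two estimates in \cref{eqn:error_splitting} and using that the discrete average $\frac{1}{N_{\vk}^3}\sum_{\vk_i\vk_j\vk_a\in\mathcal{K}}\sum_{ijab}\abs{W_{ijab}}$ is uniformly bounded in $N_{\vk}$ (itself a consequence of applying the trapezoidal rule to the regularized singularity of $W$), the amplitude-error term controls the energy error at the rate $\Or(N_{\vk}^{-1/3})$ and dominates the $\Or(N_{\vk}^{-1})$ piece. The main obstacle I anticipate is the singularity bookkeeping in the inductive step: one must verify term-by-term through every linear and quadratic piece of the CCD iteration \cref{eqn:amplitude_ccd} that no hidden cancellation of the strong hole--hole or particle--particle $1/\abs{\vq}^2$ factor occurs inside the inner $\vk$-sums, and that successive iterations do not compound into higher-order singularities beyond what the quadrature tool can handle, so that each sweep costs only a single factor of $\Or(N_{\vk}^{-1/3})$. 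A parallel but secondary hurdle is propagating enough smoothness of $T^{\text{CCD}(n),\text{TDL}}$ through each iteration to license the outer quadrature estimate.
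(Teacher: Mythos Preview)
Your proposal is correct and takes essentially the same approach as the paper: the same error splitting, the same recursive argument for the amplitude error with your pieces (i) and (ii) matching the paper's single-iteration amplitude-error lemma and Lipschitz-continuity lemma respectively, and the same identification of the two key obstacles. The paper resolves the singularity bookkeeping and smoothness propagation by introducing a function space $\mathbb{T}(\Omega^*)$ of periodic functions with order-$0$ algebraic singularity at $\vk_a=\vk_i$ and proving the TDL iteration map $\mathcal{F}_{\text{TDL}}$ preserves it; note also that the $\Or(N_{\vk}^{-1/3})$ contribution arises not only from the pure hole--hole and particle--particle terms you singled out but from every linear term whose ERI can have fully matching band indices, including two of the $3h3p$ ladder terms.
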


We remark that in MP2, there is no finite-size error in its amplitude calculation, i.e., 
$T^{\text{MP2},\text{TDL}}_{ijab} = T^{\text{MP2},N_\vk}_{ijab}$. 
As a result, $\left|E_\text{MP2}^\text{TDL} - E_{\text{MP2}}^{N_\vk}\right| = \Or(N_\vk^{-1})$ which recovers the result in~\cite{XingLiLin2021}.
Since all terms in MP3 energy are a subset of CCD($2$), the above results on CCD$(n)$ also provide a finite-size error analysis for MP3 energy 
calculation. 
\begin{cor}\label{thm:error_mp3}
        The finite-size error in MP2 calculations is $\Or\left(N_{\vk}^{-1}\right)$, and the finite-size error in MP3 calculations is $\Or\left(N_{\vk}^{-\frac13}\right)$.
\end{cor}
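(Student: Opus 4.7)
The corollary is essentially a specialization of Theorem~\ref{thm:error_ccd} to small values of $n$, combined with two structural observations about the MP2 and MP3 amplitudes. The plan is to reduce each claim to the CCD$(n)$ splitting in \cref{eqn:error_splitting} and then show that certain terms vanish or can be controlled more tightly than in the general CCD$(n)$ bound.

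For the MP2 half, I would first identify MP2 with CCD$(1)$: starting the fixed-point iteration for the CCD amplitude equation with zero initial guess, one step returns precisely the amplitude in \cref{eqn:amplitude_mp2}. The crucial structural point is that $T^{\mathrm{MP2},N_\vk}_{ijab}(\vk_i,\vk_j,\vk_a)$ has no internal $\vk$-summation: it is a closed-form rational expression in the orbitals and orbital energies evaluated at the external momenta only. Consequently $T^{\mathrm{MP2},\mathrm{TDL}}_{ijab}(\vk_i,\vk_j,\vk_a) = T^{\mathrm{MP2},N_\vk}_{ijab}(\vk_i,\vk_j,\vk_a)$ pointwise on the Monkhorst--Pack grid, so the second line of \cref{eqn:error_splitting} is identically zero. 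The first line is a trapezoidal quadrature error for the smooth/integrable integrand $\sum_{ijab} W_{ijab} T^{\mathrm{MP2},\mathrm{TDL}}_{ijab}$; by the exact-amplitude bound in Theorem~\ref{thm:error_ccd} (applied at $n=1$) this quadrature error is $\Or(N_\vk^{-1})$, giving the MP2 claim.

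For the MP3 half, the plan is to write the MP3 energy as the sum of its diagrammatic contributions (particle--hole, hole--hole like \cref{eqn:amplitude_mp3_4h2p}, particle--particle, and their exchange counterparts). Each such diagram is one of the first-iteration corrections produced by plugging the CCD$(1)$ amplitude into the linear part of the amplitude equation; equivalently, each MP3 diagram is one of the individual terms built into the CCD$(2)$ amplitude. For each MP3 diagram I apply the splitting \cref{eqn:error_splitting}: the exact-amplitude quadrature part is $\Or(N_\vk^{-1})$ by the first bound in Theorem~\ref{thm:error_ccd}, while the amplitude part is bounded by the trapezoidal quadrature error of the single internal $\vk$-integral inside the MP3 amplitude. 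The key point is that the hole--hole and particle--particle MP3 diagrams contain exactly the strongly singular integrand structure that is the source of the $\Or(N_\vk^{-1/3})$ rate in Theorem~\ref{thm:error_ccd}; applying the amplitude bound of Theorem~\ref{thm:error_ccd} to these individual diagrams (which is what the proof of that theorem in fact establishes, diagram by diagram) yields a uniform $\Or(N_\vk^{-1/3})$ bound on the per-point amplitude error. Summing over the finitely many MP3 diagrams and inserting into \cref{eqn:energy_finite} produces an energy error of $\Or(N_\vk^{-1/3})$, which dominates the $\Or(N_\vk^{-1})$ quadrature part.

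The only nontrivial step is the last one: asserting that the amplitude bound in Theorem~\ref{thm:error_ccd} holds diagram-by-diagram rather than only for the aggregated CCD$(n)$ amplitude. I expect this to be the main (though mild) obstacle, and I would address it by appealing to the structure of the proof of Theorem~\ref{thm:error_ccd}: since CCD$(n)$ with zero initial guess is the finite sum of its perturbative diagrammatic contributions and the theorem's estimate is obtained by bounding each contribution via the same singular-trapezoidal-rule toolkit, restricting the sum to the MP3 subset preserves the bound. With that observation in hand, both statements of the corollary follow directly from Theorem~\ref{thm:error_ccd}.
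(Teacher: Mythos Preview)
Your proposal is correct and matches the paper's argument essentially line for line: MP2 is identified with CCD$(1)$ whose amplitude has no internal $\vk$-sum so only the $\Or(N_\vk^{-1})$ energy quadrature term in \cref{eqn:error_splitting} survives, while MP3 is the constant-plus-linear subset of the CCD$(2)$ amplitude and inherits the $\Or(N_\vk^{-\frac13})$ bound because the proof of \cref{thm:error_ccd} (specifically \cref{lem:thm1_amplitude_exact} and \cref{tab:amplitude_error}) bounds each linear term separately. Your flagged ``nontrivial step'' is not an obstacle: the term-by-term structure you anticipated is exactly how the paper proceeds.
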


\cref{thm:error_ccd} provides the finite-size error estimates for CCD$(n)$ calculations that consist 
of finite number of perturbative terms (Feynman diagrams) in CCD, but not for the converged CCD calculation.
These estimates holds for any fixed number of iterations $n$ even when the iteration does not converge 
as $n\rightarrow \infty$, and the prefactors in these estimates depend on $n$. 
Under additional assumptions that can control the regularities of the iterates and guarantee the convergence of the fixed point iterations in the finite and the TDL cases, 
we show that the convergence rate of the finite-size error for the converged CCD energy calculation matches that of the CCD$(n)$ energy calculations.

\begin{cor}[Error of CCD]\label{thm:error_ccd_converge}
        Under additional assumptions, the finite-size error in the CCD energy calculation is $\Or(N_\vk^{-\frac13})$.
\end{cor}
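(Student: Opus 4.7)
The plan is to combine the CCD$(n)$ estimates of \cref{thm:error_ccd} with a uniform-in-$N_\vk$ contraction hypothesis on the CCD amplitude equation, so that the converged CCD amplitude can be controlled by a CCD$(n)$ iterate for $n$ large. Abstractly, write the CCD amplitude equation as $T = F^{N_\vk}(T)$ in the finite case and $T = F^{\text{TDL}}(T)$ in the TDL case, with $F^\bullet$ consisting of a constant driver plus linear and quadratic terms in $T$ (see \cref{eqn:amplitude_ccd}). The additional assumptions will include that both $F^{N_\vk}$ and $F^{\text{TDL}}$ are contractions, with a common constant $\rho<1$, in a suitable supremum-type norm on amplitudes, uniformly in $N_\vk$, on a ball containing all iterates. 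This is the standard regime in which CCD$(n)$ converges geometrically to CCD~\cite{Schneider2009}, and it simultaneously yields existence and uniqueness of $T^{\text{CCD},\bullet}$ in both settings.

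First I would bound the amplitude error at the fixed point. Using the two fixed-point equations,
\begin{align*}
T^{\text{CCD},\text{TDL}} - T^{\text{CCD},N_\vk}
&= \left(F^{\text{TDL}}(T^{\text{CCD},\text{TDL}}) - F^{N_\vk}(T^{\text{CCD},\text{TDL}})\right) \\
&\quad + \left(F^{N_\vk}(T^{\text{CCD},\text{TDL}}) - F^{N_\vk}(T^{\text{CCD},N_\vk})\right).
\end{align*}
The second parenthesis is bounded by $\rho\,\norm{T^{\text{CCD},\text{TDL}} - T^{\text{CCD},N_\vk}}$ by the contraction assumption. The first parenthesis is a pure trapezoidal-quadrature error: both versions of $F$ are evaluated at the \emph{same} smooth TDL amplitude, so the particle-particle and hole-hole ladder singularity analysis used in \cref{thm:error_ccd} for the amplitude part applies directly and gives an $\Or(N_\vk^{-1/3})$ bound. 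Rearranging produces $\norm{T^{\text{CCD},\text{TDL}} - T^{\text{CCD},N_\vk}} = \Or((1-\rho)^{-1} N_\vk^{-1/3})$.

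Next I would handle the energy-quadrature term in \cref{eqn:error_splitting} evaluated at the converged amplitude $T^{\text{CCD},\text{TDL}}$. The key observation is that $T^{\text{CCD},\text{TDL}}$ inherits its singularity structure from the driver and from the particle-hole, particle-particle, and hole-hole kernels appearing in $F^{\text{TDL}}$: the fixed-point equation exposes it as a geometrically convergent Neumann-type series of functions of the same singularity template as the CCD$(n)$ iterates. With this regularity in hand, the Poisson-summation-based quadrature analysis developed in the paper yields the $\Or(N_\vk^{-1})$ bound exactly as in the CCD$(n)$ energy-with-exact-amplitudes estimate, and now with a prefactor independent of $n$. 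Plugging these two bounds into \cref{eqn:error_splitting} via $\sum_{ijab} W_{ijab} \in L^\infty$ and a triangle inequality produces the claimed $\Or(N_\vk^{-1/3})$ rate for the total CCD energy error.

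The hard part will be identifying the right function space in which to set up the contraction. Its norm must simultaneously (i) dominate the pointwise norm on amplitudes needed to close the estimate above, (ii) be closed under the quadratic operations defining $F$, and (iii) control the precise singular behavior required by the quadrature lemmas, all uniformly in $N_\vk$. Once such a space is secured, the contraction constant $\rho$ propagates both the amplitude error and the singularity structure from each iterate to the limit, and the rest of the argument is bookkeeping built on top of \cref{thm:error_ccd}.
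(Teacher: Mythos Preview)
Your proposal is correct and takes essentially the same route as the paper, with one small but noteworthy variation in how the amplitude error is handled. The paper bounds $\|\mathcal{M}_{\mathcal{K}}t_n - T_n^{N_\vk}\|_\infty$ recursively in $n$ using the splitting \cref{eqn:amplitude_splitting}, obtains a bound $C\frac{1-L^n}{1-L}N_\vk^{-1/3}$ with $L<1$ the contraction constant, and then lets $n\to\infty$; this requires as an additional assumption that the single-iteration quadrature error estimate (\cref{lem:thm1_amplitude_exact}) holds with a constant uniform over the sequence $\{t_n\}$. You instead work directly at the fixed point via the standard perturbation-of-fixed-points identity, applying the contraction once and the single-iteration quadrature estimate only at $t_*$. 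Your version is slightly cleaner and replaces the paper's uniform-in-$n$ assumption by the requirement that $t_*\in\mathbb{T}(\Omega^*)$ with controlled singularity prefactors, which you correctly flag as the substantive point. The energy-quadrature part (your second step) is identical to the paper's use of \cref{lem:thm1_energy} at $t_*$. Both arguments share the same list of hypotheses in spirit: a common contraction constant for $\mathcal{F}_{N_\vk}$ and $\mathcal{F}_\text{TDL}$, compatibility of the contraction domains under $\mathcal{M}_\mathcal{K}$, and enough regularity of the TDL amplitude(s) for the quadrature lemmas to apply.
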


\section{Proof for Theorem \ref{thm:error_ccd}}\label{sec:proof}
\subsection{Setup}
In CCD$(n)$ and all its included perturbation terms (e.g., MP2 and MP3), the double amplitudes computed in the finite case 
can be viewed as tensors 
\[
\left\{
T_{ijab}(\vk_i, \vk_j, \vk_a)
\right\}_{ijab, \vk_i\vk_j\vk_a\in\mathcal{K}} 
\in 
\mathbb{C}^{n_\text{occ}\times n_\text{occ} \times n_\text{vir} \times n_\text{vir}\times N_\vk\times N_\vk \times N_\vk},
\]
where we assume $n_\text{occ}$ occupied and $n_\text{vir}$ virtual bands. 
Meanwhile, the exact double amplitudes in the TDL can be viewed as a set of functions of $\vk_i,\vk_j,\vk_a \in \Omega^*$ indexed by band 
indices $i,j,a,b$. 
As shown later in \cref{lem:thm1_nonsmooth}, all these functions are in a function space $\mathbb{T}(\Omega^*)$ with special smoothness 
properties and the exact double amplitude can be described as
\[
\left\{
T_{ijab}(\vk_i, \vk_j, \vk_a)
\right\}_{ijab} 
\in 
\mathbb{T}(\Omega^*)^{n_\text{occ}\times n_\text{occ} \times n_\text{vir} \times n_\text{vir}}. 
\]
In CCD$(n)$ calculations with a finite $N_{\vk}$, the computed amplitude approximates the exact amplitude 
at $\mathcal{K}\times\mathcal{K}\times\mathcal{K}$.  
We define a map that evaluates the exact amplitude at this discrete mesh as
\begin{align*}
\mathcal{M}_{\mathcal{K}}:&\  
\mathbb{T}(\Omega^*)^{n_\text{occ}\times n_\text{occ} \times n_\text{vir} \times n_\text{vir}}
&& \longrightarrow
& \mathbb{C}^{n_\text{occ}\times n_\text{occ} \times n_\text{vir} \times n_\text{vir}\times N_\vk\times N_\vk \times N_\vk}
\\
& 
\left\{T_{ijab}(\vk_i, \vk_j, \vk_a)\right\}_{ijab}
&& \longrightarrow
& 
\left\{T_{ijab}(\vk_i, \vk_j, \vk_a)\right\}_{ijab, \vk_i\vk_j\vk_a\in\mathcal{K}}.
\end{align*}
In the following discussions, we use $T$ to refer to amplitude tensors in the finite case and $t$ to refer to amplitude functions 
in the TDL case. 
We focus on estimating the error in the amplitude calculation between $T$ and $t$ using the (entrywise) max norm: 
\begin{equation}
        \|T - \mathcal{M}_{\mathcal{K}}t\|_\infty = \max_{ijab, \vk_i\vk_j\vk_a\in\mathcal{K}} |T_{ijab}(\vk_i, \vk_j, \vk_a) - t_{ijab}(\vk_i, \vk_j, \vk_a)|.        
\end{equation}
Similarly, define $\|t\|_\infty = \max_{ijab} \|t_{ijab}\|_{L^{\infty}(\Omega^*\times\Omega^*\times\Omega^*)}$ and $\|\mathcal{M}_{\mathcal{K}}t\|_\infty \leqslant \|t\|_\infty$.

Define the two linear functionals that compute the correlation energy with a given double amplitude in the finite and the TDL cases, respectively, 
as 
\begin{align*}
        \mathcal{G}_{N_\vk}(T)
        & = 
        \frac{1}{N_{\vk}^3}
        \sum_{\vk_i\vk_j \vk_a\in\mathcal{K}}\sum_{i j a b}
        W_{ijab}(\vk_i, \vk_j, \vk_a)
        T_{ijab}(\vk_i, \vk_j, \vk_a),
        \\
        \mathcal{G}_\text{TDL}(t)
        & = 
        \frac{1}{|\Omega^*|^3}
        \int_{(\Omega^*)^{\times 3}}\ud\vk_i \ud\vk_j \ud\vk_a
        \sum_{i j a b}
        W_{ijab}(\vk_i, \vk_j, \vk_a)
        t_{ijab}(\vk_i, \vk_j, \vk_a).
\end{align*}
Furthermore, we denote the two mappings that define the fixed point iterations over the amplitude equations in the finite and 
the TDL cases, respectively, as 
\begin{align*}
        \mathcal{F}_{N_\vk}(T)
        &:
        \mathbb{C}^{n_\text{occ}\times n_\text{occ} \times n_\text{vir} \times n_\text{vir}\times N_\vk\times N_\vk \times N_\vk}
        \rightarrow
        \mathbb{C}^{n_\text{occ}\times n_\text{occ} \times n_\text{vir} \times n_\text{vir}\times N_\vk\times N_\vk \times N_\vk},
        \\
        \mathcal{F}_\text{TDL}(t)
        &: 
        \mathbb{T}(\Omega^*)^{n_\text{occ}\times n_\text{occ} \times n_\text{vir} \times n_\text{vir}}
        \rightarrow
        \mathbb{T}(\Omega^*)^{n_\text{occ}\times n_\text{occ} \times n_\text{vir} \times n_\text{vir}},
\end{align*}
which correspond to the right hand sides of \cref{eqn:amplitude_ccd} and \cref{eqn:amplitude_ccd_tdl} with all the concerned
$i,j,a,b,\vk_i,\vk_j,\vk_a$. 
One main technical result of this paper is to prove that the image of $\mathcal{F}_\text{TDL}$ is in $\mathbb{T}(\Omega^*)^{n_\text{occ}\times n_\text{occ} \times n_\text{vir} \times n_\text{vir}}$ (see \cref{lem:thm1_nonsmooth}).

Using these notations, the CCD$(n)$ energy calculation in the finite case can be formulated as 
\begin{equation}\label{eqn:fixed_point_finite}
        E_{\text{CCD}(n)}^{N_\vk} = \mathcal{G}_{N_\vk}(T_n)
        \quad 
        \text{with} \quad  
        \begin{array}{ll}
                T_m & = \mathcal{F}_{N_\vk}(T_{m-1}), m =1, 2, \ldots
                \\
                T_0 & = \bm{0} \in 
                \mathbb{C}^{n_\text{occ}\times n_\text{occ} \times n_\text{vir} \times n_\text{vir}\times N_\vk\times N_\vk \times N_\vk}
        \end{array},
\end{equation}
and in the TDL case can be formulated as 
\begin{equation}\label{eqn:fixed_point_tdl}
        E_{\text{CCD}(n)}^\text{TDL} = \mathcal{G}_\text{TDL}(t_n)
        \quad 
        \text{with}
        \quad
        \begin{array}{ll}
                t_m & = \mathcal{F}_\text{TDL}(t_{m-1}), m = 1, 2, \ldots
                \\
                t_0 & = \bm{0} \in 
                \mathbb{T}(\Omega^*)^{n_\text{occ}\times n_\text{occ} \times n_\text{vir} \times n_\text{vir}}
        \end{array}.
\end{equation}
To connect to the previous notations in \cref{sec:main_result}, we have 
\[
T_n = \{T_{ijab}^{\text{CCD}(n), N_\vk}(\vk_i,\vk_j,\vk_a)\}_{ijab,\vk_i,\vk_j,\vk_a\in\mathcal{K}}
\quad \text{and} \quad t_n = \{T_{ijab}^{\text{CCD}(n), \text{TDL}}(\vk_i,\vk_j,\vk_a)\}_{ijab}. 
\]
When the two fixed point iterations converge with respect to $n$, the corresponding CCD$(n)$ energies converge to the CCD energies in the finite and the TDL cases, respectively.

Lastly, we introduce the notations for the trapezoidal quadrature rule that will be used in the proof. 
Given an $n$-dimensional cubic domain $V$, we construct a uniform mesh $\mathcal{X}$ in $V$ by first partitioning $V$ into subdomains uniformly and 
then sampling one point in each subdomain with the same offset. 
The (generalized) trapezoidal rule for an integrand $f(\vx)$ in $V$ using $\mathcal{X}$ is defined as
\[
\mathcal{Q}_V(f, \mathcal{X}) = \dfrac{|V|}{|\mathcal{X}|}\sum_{\vx_i \in \mathcal{X}} f(\vx_i).
\]
Further denote the targeted exact integral and the corresponding quadrature error as 
\begin{equation*}
        \mathcal{I}_V(f) = \int_V f(\vx)\ud\vx, \qquad \mathcal{E}_{V}(f, \mathcal{X}) = \mathcal{I}_V(f) - \mathcal{Q}_V(f, \mathcal{X}). 
\end{equation*}
In the following analysis, we abuse the notation ``$C$'' to denote a generic constant that is independent of any concerned 
terms in the context unless otherwise specified. In other words, $f\leqslant C g$ is equivalent to $|f| = \Or(|g|)$. 

\subsection{Proof Outline}
In CCD$(n)$ calculation, the splitting of the finite-size error shown in \cref{eqn:error_splitting} can be written as 
\begin{align}
        \left|
                E_{\text{CCD}(n)}^\text{TDL} - E_{\text{CCD}(n)}^{N_\vk}
        \right|
        & = 
        \left|
                \mathcal{G}_\text{TDL}(t_n) - \mathcal{G}_{N_\vk}(T_n)
        \right|
        \nonumber\\
        & \leqslant 
        \left|
                \mathcal{G}_\text{TDL}(t_n) - \mathcal{G}_{N_\vk}(\mathcal{M}_{\mathcal{K}}t_n)
        \right|
        +
        \left|
                \mathcal{G}_{N_\vk}(\mathcal{M}_{\mathcal{K}}t_n) - \mathcal{G}_{N_\vk}(T_n)
        \right|
        \nonumber\\
        & \leqslant 
        \left|
                \mathcal{G}_\text{TDL}(t_n) - \mathcal{G}_{N_\vk}(\mathcal{M}_{\mathcal{K}}t_n)
        \right|
        +
        C \left\|\mathcal{M}_{\mathcal{K}}t_n - T_n\right\|_\infty,
        \label{eqn:energy_splitting}
\end{align}
where the last two terms can be interpreted as the error in the energy calculation 
using the exact CCD$(n)$ amplitude $t_n$ and the error in the amplitude calculation, respectively.
The last inequality uses the boundedness of the linear operator $\mathcal{G}_{N_\vk}$, i.e., 
\begin{align*}
        \left| 
                \mathcal{G}_{N_\vk}(T)
        \right|
        & \leqslant
        \frac{1}{N_{\vk}^3}
        \sum_{\vk_i\vk_j \vk_a\in\mathcal{K}}\sum_{i j a b}
        \left| 
                W_{ijab}(\vk_i, \vk_j, \vk_a)
                T_{ijab}(\vk_i, \vk_j, \vk_a)
        \right|
        \\
        & \leqslant 
        \frac{1}{N_{\vk}^3}
        \sum_{\vk_i\vk_j \vk_a\in\mathcal{K}}\sum_{i j a b}
        C
        \max_{ijab,\vk_i\vk_j\vk_c\in\mathcal{K}}
        \left| 
                T_{ijab}(\vk_i, \vk_j, \vk_a)
        \right|
        \leqslant C\|T\|_\infty.
\end{align*}
This uses the fact that $W_{ijab}(\vk_i,\vk_j,\vk_a) = \Or(1)$.

The error in the energy calculation using exact amplitude can be interpreted as a quadrature error 
\begin{equation}\label{eqn:error_energy_exact_t}
        \left|
                \mathcal{G}_\text{TDL}(t_n) - \mathcal{G}_{N_\vk}(\mathcal{M}_{\mathcal{K}}t_n)
        \right|
        = 
        \left|
                \dfrac{1}{|\Omega^*|^3}
                \mathcal{E}_{\Omega^*\times\Omega^*\times\Omega^*}
                \left(
                \sum_{ijab}
                \left(
                W_{ijab}[t_n]_{ijab}
                \right)(\vk_i, \vk_j, \vk_a),
                \mathcal{K}\times\mathcal{K}\times\mathcal{K}
                \right)
        \right|.
\end{equation}
The defined integrand is periodic with respect to $\vk_i,\vk_j,\vk_a\in\Omega^*$. 
As a result, the dominant quadrature error is determined by the smoothness properties of the integrand. 
The antisymmetrized ERI, $W_{ijab}$, is made of two ERIs.
The ERI $\braket{i\vk_i, j\vk_j | a\vk_a, b\vk_b}$  is singular (or slightly more accurately, nonsmooth) 
only at zero momentum transfer $\vq = \vk_a - \vk_i = \bm{0}$ (and its periodic images) due to the fraction term in its definition \cref{eqn:eri}, 
\[
        \frac{4\pi}{\abs{\Omega}} 
        \frac{
        \hat{\varrho}_{i\vk_i, a(\vk_i + \vq)}(\bm{0}) \hat{\varrho}_{j\vk_j, b(\vk_j - \vq)}(\bm{0})        
        }{|\vq|^2}.
\]
In this term, the numerator is smooth with respect to $\vk_i,\vk_j,\vq$ and the singularity solely comes from the denominator that only depends on $\vq$.
The other ERI $\braket{i\vk_i, j\vk_j | b\vk_b, a\vk_a}$ in $W_{ijab}$ has similar singularity structure at its zero momentum transfer point.

We characterize the singularity structure of CC amplitudes and ERIs in terms of the algebraic singularity of certain orders (see \cref{sub:fractional}). Our first main technical result is that the singularity structure of all the exact CCD$(n)$ amplitude entries, $[t_n]_{ijab}(\vk_i,\vk_j,\vk_a)$, is the same as that of the ERI 
 $\braket{i\vk_i, j\vk_j | a\vk_a, b\vk_b}$ (or equivalently the MP2/CCD(1) amplitude entries).
\begin{lem}[Singularity structure of the amplitude]
\label{lem:thm1_nonsmooth}
In CCD$(n)$ calculation with $n > 0$, each entry of the exact double amplitude $t_n = \{T_{ijab}^{\text{CCD}(n), \text{TDL}}\}_{ijab}$ belongs to the following function space
\begin{align*}
        \mathbb{T}(\Omega^*) = \big\{
                f(\vk_i, \vk_j, \vk_a):\  
                & f \text{ is periodic with respect to } \vk_i, \vk_j, \vk_a \in \Omega^*,\\
                & f \text{ is smooth everywhere except at } \vk_a = \vk_i 
                \text{ with algebraic singularity of order } 0,\\
                & f \text{ is smooth with respect to } \vk_i, \vk_j \text{ at nonsmooth point }\vk_a = \vk_i
        \big\}.
\end{align*}
\end{lem}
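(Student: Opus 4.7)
The plan is to proceed by induction on the iteration count $n$. The base case is CCD$(1)$, whose amplitude coincides with the MP2 amplitude in \cref{eqn:amplitude_mp2}, namely the antisymmetrized ERI divided by the denominator $\varepsilon_{i\vk_i,j\vk_j}^{a\vk_a,b\vk_b}$. For the inductive step, assuming $t_{n-1}\in\mathbb{T}(\Omega^*)^{n_\text{occ}^2 n_\text{vir}^2}$, I would show that $t_n = \mathcal{F}_\text{TDL}(t_{n-1})$ remains in the same space.

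For the base case, the energy denominator is uniformly bounded away from zero (direct-gap assumption) and smooth in all momentum arguments, so the singularity is inherited entirely from the ERI. In the lattice sum \cref{eqn:eri}, only the $\vG=\vec{0}$ term at $\vq=\vk_a-\vk_i=\vec{0}$ (and its periodic images) is nonsmooth. Near such a point, $1/|\vq|^2$ is multiplied by the pair products $\hat{\varrho}_{i\vk_i,a(\vk_i+\vq)}(\vec{0})\hat{\varrho}_{j\vk_j,b(\vk_j-\vq)}(\vec{0})$, each of which vanishes at $\vq=\vec{0}$ because $\braket{\psi_{i\vk_i}|\psi_{a\vk_i}}=0$ by orbital orthogonality between the occupied and virtual bands. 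A Taylor expansion of each pair product in $\vq$ exhibits the $\vG=\vec{0}$ contribution as a bounded but angularly nonsmooth function of $\vq$, which matches the definition of algebraic singularity of order $0$ (to be introduced in \cref{sub:fractional}). The dependence on $\vk_i$ and $\vk_j$ at $\vq=\vec{0}$ is smooth because the orbitals, the orbital energies, and hence the pair products and the denominator all depend smoothly on $\vk_i, \vk_j$ under the stated gauge assumption; the singularity lives purely in the $\vq$ direction.

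For the inductive step, I would decompose $\mathcal{F}_\text{TDL}$ into the overall prefactor $1/\varepsilon_{i\vk_i,j\vk_j}^{a\vk_a,b\vk_b}$ times its constant term (the driver ERI), its linear terms (particle-particle ladder, hole-hole ladder, and particle-hole ring/crossing), and its quadratic terms in the amplitude, as laid out in \cref{eqn:amplitude_ccd_tdl}. In each term, the external momenta are $(\vk_i,\vk_j,\vk_a)$, and intermediate momenta $\vk_k,\vk_c,\ldots$ are integrated over $\Omega^*$. Any internal ERI carries an order-$0$ singularity only at its own momentum transfer, and any inserted copy of $t_{n-1}$ carries such a singularity only on its own external particle--hole locus by the inductive hypothesis. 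The claim to verify is that, after imposing crystal momentum conservation and integrating over intermediate momenta, the resulting function of $(\vk_i,\vk_j,\vk_a)$ is smooth except possibly at $\vk_a=\vk_i$, is at worst order-$0$ singular there, and is smooth in $\vk_i,\vk_j$ on that locus.

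The main obstacle will be the pp-ladder, hh-ladder, and quadratic contributions, in which two (or more) order-$0$ singular factors are multiplied and contracted over a shared intermediate momentum. For instance, in the pp-ladder term the integration over $\vk_c$ pairs the ERI singularity at $\vk_c=\vk_a$ against the $t_{n-1}$ singularity at $\vk_c=\vk_i$. I would control such products by a partition of unity on the $\vk_c$ integration domain that separates a neighborhood of $\vk_c=\vk_a$, a neighborhood of $\vk_c=\vk_i$, and a region bounded away from both; on each piece one factor is smooth and the other has only an order-$0$ singularity, both of which are integrable in three dimensions and, after integration, depend only on the external momentum transfer $\vq=\vk_a-\vk_i$. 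This shows that integration over an internal momentum smooths the intermediate singularities while transferring, at worst, an order-$0$ singularity in $\vq$ to the external variables; the smoothness in $\vk_i,\vk_j$ on the locus $\vk_a=\vk_i$ follows in each term from the fact that the surviving nonsmoothness depends on $(\vk_i,\vk_j,\vk_a)$ only through $\vq$. The ph-ring/crossing and quadratic terms follow by the same recipe after tracking which external momentum transfer each singular factor contributes to and verifying that no other singular locus in $(\vk_i,\vk_j,\vk_a)$ is produced.
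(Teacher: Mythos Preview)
Your induction framework and base-case analysis match the paper's approach, and the decomposition of $\mathcal{F}_{\text{TDL}}$ into constant, linear, and quadratic pieces is also the right organization. The gap is in the inductive step.

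You assert that ``any internal ERI carries an order-$0$ singularity only at its own momentum transfer,'' but this fails for the ladder contractions. In the hh-ladder $\braket{k\vk_k,l\vk_l|i\vk_i,j\vk_j}$ with $k=i$, $l=j$ (and analogously in the pp-ladder $\braket{a\vk_a,b\vk_b|c\vk_c,d\vk_d}$ with $c=a$, $d=b$), the pair products $\hat{\varrho}_{i\vk_k,i(\vk_k+\vq_1)}(\mathbf{0})$ and $\hat{\varrho}_{j\vk_l,j(\vk_l-\vq_1)}(\mathbf{0})$ equal $1$ at $\vq_1=\mathbf{0}$, not $0$; the ERI therefore has algebraic singularity of order $-2$, not $0$. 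This is exactly what distinguishes the ladder diagrams from the ring diagrams in the paper and is the mechanism behind the $N_\vk^{-1/3}$ rate.

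With one factor of order $-2$ under the integral, your partition-of-unity sketch no longer closes as written. The substantive claim you need is that integrating the product of an order-$(-2)$ factor (singular at $\vk_k=\vk_i$) against an order-$0$ factor (singular at $\vk_k=\vk_a$) produces a function of $\vq=\vk_a-\vk_i$ with singularity of order $\max(-2,0)=0$, together with the full hierarchy of derivative bounds in \cref{def:fractional2}. A partition into fixed-size neighborhoods cannot establish this because the two singular points collide as $\vq\to\mathbf{0}$; one needs the neighborhood radii to scale with $|\vq|$ and to propagate the argument through all $\partial_\vq^\valpha$. The paper isolates this step as \cref{lem:nonsmooth_integral}, whose proof uses precisely a $|\vq|$-scaled ball and a recursive derivative estimate; your sketch is in the right spirit but omits both the adaptive scale and the derivative bookkeeping, which is where the work lies.
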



Our second technical result is the estimate of the quadrature error in the energy calculation using an arbitrary amplitude $t$ whose each entry lies in 
$\mathbb{T}(\Omega^*)$ (which covers the exact CCD$(n)$ amplitude). 
\begin{lem}[Energy error with exact amplitude]
\label{lem:thm1_energy}
        For an arbitrary amplitude $t\in \mathbb{T}(\Omega^*)^{n_\text{occ}\times n_\text{occ}\times n_\text{vir}\times n_\text{vir}}$, 
        the finite-size error in energy calculation using $t$ can be estimated as  
        \[
        \left|
                \mathcal{G}_\text{TDL}(t) - \mathcal{G}_{N_\vk}(\mathcal{M}_\mathcal{K}t)
        \right|
        \leqslant C N_\vk^{-1}.
        \]
\end{lem}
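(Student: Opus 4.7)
The plan is to start from the identification (\ref{eqn:error_energy_exact_t}), which already rewrites the quantity to be bounded as a trapezoidal quadrature error on the $9$-dimensional torus $(\Omega^*)^{\times 3}$ applied to the periodic integrand $F(\vk_i,\vk_j,\vk_a)=\sum_{ijab}W_{ijab}(\vk_i,\vk_j,\vk_a)[t]_{ijab}(\vk_i,\vk_j,\vk_a)$. By the ERI singularity analysis recalled after (\ref{eqn:error_energy_exact_t}) together with the defining properties of $\mathbb{T}(\Omega^*)$, the integrand is globally bounded and $C^\infty$ away from the two codimension-$3$ subvarieties $\{\vk_a=\vk_i\}$ and $\{\vk_a=\vk_j\}$ coming from the two ERIs in $W_{ijab}$, and on each of these subvarieties it carries an algebraic singularity of order $0$, i.e., bounded with angular nonsmoothness transverse to the subvariety. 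Here the first-order vanishing of the pair products $\hat{\varrho}_{i\vk,a(\vk+\vq)}(\mathbf{0})$ at $\vq=\mathbf{0}$ is essential: it reduces the $|\vq|^{-2}$ Coulomb factor to an order-$0$ singularity rather than a divergent one, and it is this cancellation that places both $W$ and the amplitudes in the appropriate regularity class in the first place.

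Next I would introduce a smooth periodic partition of unity $1=\chi_0+\chi_1+\chi_2$ on $(\Omega^*)^{\times 3}$, with $\chi_0$ supported strictly away from both singular subvarieties and $\chi_\ell$ supported in a fixed-size tubular neighborhood of the $\ell$-th subvariety. On $\chi_0 F$ the integrand is $C^\infty$ and periodic on the $9$D torus, so the classical Poisson-summation estimate for smooth periodic integrands gives super-algebraic decay of the quadrature error, which sits well below $O(N_\vk^{-1})$. By the symmetry $(i,a)\leftrightarrow(j,b)$ it suffices to bound the contribution of $\chi_1 F$. I would then perform the change of variables $\vq=\vk_a-\vk_i$, which preserves both $\Omega^*$ and the Monkhorst--Pack mesh $\mathcal{K}$ by uniformity and translation invariance. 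After this change the integrand $G(\vk_i,\vk_j,\vq):=(\chi_1 F)(\vk_i,\vk_j,\vk_i+\vq)$ is $C^\infty$ and periodic in $(\vk_i,\vk_j)$ \emph{uniformly in $\vq$} — using the ``smoothness in $\vk_i,\vk_j$ at the nonsmooth point'' clause of $\mathbb{T}(\Omega^*)$ together with the smoothness of the pair products $\hat{\varrho}$ — and has an isolated order-$0$ algebraic singularity in the three-dimensional variable $\vq$ at $\vq=\mathbf{0}$. Splitting the $9$D quadrature error by the tensor-product identity $\mathcal I_6\mathcal I_3-\mathcal Q_6\mathcal Q_3=(\mathcal I_6-\mathcal Q_6)\mathcal I_3+\mathcal Q_6(\mathcal I_3-\mathcal Q_3)$, with $\mathcal I_6,\mathcal Q_6$ acting in $(\vk_i,\vk_j)$ and $\mathcal I_3,\mathcal Q_3$ in $\vq$, the first term is super-algebraically small (smooth periodic $6$D integrand with bounds uniform in $\vq$), while the second is controlled, via the uniform boundedness of $\mathcal Q_6$ as an averaging operator, by the pointwise bound $\sup_{\vk_i,\vk_j}|\mathcal E_{\Omega^*}(G(\vk_i,\vk_j,\cdot),\mathcal K)|\leq C N_\vk^{-1}$.

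The main obstacle — and the genuine content of the lemma — is this final three-dimensional estimate: for a periodic integrand on $\Omega^*$ that is smooth away from the origin and has an order-$0$ algebraic singularity there, the $N_\vk$-point trapezoidal rule incurs a quadrature error of size $O(N_\vk^{-1})$. Heuristically, the single grid cell of volume $N_\vk^{-1}$ around $\vq=\mathbf{0}$ carries the full nonsmoothness on an otherwise bounded integrand and so contributes at most $O(N_\vk^{-1})$, while cells away from the singularity contribute super-algebraically small errors. Making this rigorous, however, requires more than naive Fourier-decay estimates, because the aliasing sum for a bounded-but-discontinuous function in three dimensions does not converge absolutely; I would therefore invoke the Poisson-summation quadrature analysis in the spirit of Izzo \cite{Izzo2022} adapted to the $\mathbb{T}(\Omega^*)$ setting, which exploits the homogeneous-degree-$0$ angular cancellation structure of $G$ near the origin. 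It is this quadrature estimate, rather than the localization and change-of-variable reductions of the first two steps, that is the technical heart of the proof.
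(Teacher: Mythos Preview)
Your reduction for the direct part of $W_{ijab}$ is essentially the paper's: after $\vq=\vk_a-\vk_i$, both $\braket{IJ|AB}$ and $t_{ijab}$ carry their only singularity at $\vq=\bm{0}$, the integrand is genuinely smooth in $(\vk_i,\vk_j)$, and your tensor-product splitting together with the single-point order-$0$ quadrature lemma (the paper's \cref{lem:quaderror1}) gives $O(N_\vk^{-1})$.

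The gap is in the exchange ERI $\braket{IJ|BA}$, whose momentum transfer is $\vk_b-\vk_i=\vk_j-\vk_a$. Its singular set $\{\vk_a=\vk_j\}$ \emph{intersects} $\{\vk_a=\vk_i\}$ along the codimension-$6$ locus $\{\vk_i=\vk_j=\vk_a\}$, so a tubular neighborhood $\chi_1$ of $\{\vk_a=\vk_i\}$ cannot be chosen disjoint from $\{\vk_a=\vk_j\}$. Consequently $G(\vk_i,\vk_j,\vq)=(\chi_1 F)(\vk_i,\vk_j,\vk_i+\vq)$ is \emph{not} $C^\infty$ in $(\vk_i,\vk_j)$: for every small $\vq$ the exchange factor is nonsmooth along $\{\vk_j=\vk_i+\vq\}$, so the term $(\mathcal I_6-\mathcal Q_6)\mathcal I_3$ in your splitting is not super-algebraically small as claimed. (The symmetry $(i,a)\leftrightarrow(j,b)$ you invoke also does not swap the two subvarieties; by crystal momentum conservation it maps each to itself.)

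The paper avoids a spatial partition of unity altogether. It splits $W_{ijab}$ algebraically into direct and exchange pieces, and for the exchange piece introduces \emph{two} momentum-transfer variables via $\vk_j\to\vk_a+\vq_1$, $\vk_i\to\vk_a-\vq_2$. In these coordinates the integrand has the product form $f_1(\vq_1,\vq_2)f_2(\vq_1,\vq_2)$ with $f_s$ singular only at $\vq_s=\bm{0}$ of order $0$ and smooth in the remaining variable $\vk_a$. A dedicated two-singularity quadrature estimate (\cref{lem:quaderror3}, type~4 in \cref{tab:class_integral}) then handles both singularities simultaneously to yield $O(N_\vk^{-1})$. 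Your proposal is missing either this product-form lemma or an equivalent device that copes with the intersection of the two singular loci; the single-singularity estimate you cite at the end does not suffice for the exchange contribution.
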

The above two lemmas together prove that the finite-size error in the energy calculation using the exact CCD$(n)$ amplitude in \cref{eqn:error_energy_exact_t} scales as $\Or(N_\vk^{-1})$. As will be seen later, this can be much faster than the convergence rate using the numerically computed amplitudes. 

Similar to the finite-size error splitting in \cref{eqn:energy_splitting}, the error in amplitude calculation can be split into two terms using
the recursive definitions of the CCD$(n)$ amplitudes in the finite and the TDL cases
\begin{align}
        \left\|
                \mathcal{M}_{\mathcal{K}}t_{n} - T_{n} 
        \right\|_\infty
        & = 
        \left\|
                \mathcal{M}_{\mathcal{K}}\mathcal{F}_\text{TDL}(t_{n-1}) - \mathcal{F}_{N_\vk}(T_{n-1})
        \right\|_\infty
        \nonumber \\
        & \leqslant 
        \left\|
                \mathcal{M}_{\mathcal{K}}\mathcal{F}_\text{TDL}(t_{n-1}) - 
                \mathcal{F}_{N_\vk}(\mathcal{M}_{\mathcal{K}}t_{n-1})
        \right\|_\infty
        +
        \left\|
                \mathcal{F}_{N_\vk}( \mathcal{M}_{\mathcal{K}}t_{n-1})
                -
                \mathcal{F}_{N_\vk}(T_{n-1})
        \right\|_\infty.
        \label{eqn:amplitude_splitting}
\end{align}
The first term is the error between the exact CCD$(n)$ amplitude and the one computed using the exact CCD$(n-1)$ amplitude $\mathcal{M}_\mathcal{K}t_{n-1}$. 
The second term is the error between the amplitude computed using the exact CCD$(n-1)$ amplitude, $\mathcal{M}_\mathcal{K}t_{n-1}$, and the one using the 
amplitude CCD$(n-1)$ amplitude, $T_{n-1}$.
The latter can be interpreted as the error accumulation from the CCD$(n-1)$ amplitude calculation.

To estimate the first error term in the \cref{eqn:amplitude_splitting}, the error between each exact and approximate amplitude entry using $t_{n-1}$, i.e., 
\[
\left[
\mathcal{M}_{\mathcal{K}}\mathcal{F}_\text{TDL}(t_{n-1})
-
\mathcal{F}_{N_\vk}(\mathcal{M}_{\mathcal{K}}t_{n-1})
\right]_{ijab,\vk_i\vk_j\vk_a}, 
\quad 
\forall i,j,a,b,\ \forall \vk_i,\vk_j,\vk_a\in\mathcal{K},
\]
can be decomposed into the summation of a series of quadrature errors that are associated with the calculation of different linear 
and quadratic terms in the amplitude equation. 

One third  technical result is the estimate of these quadrature errors, and it suggests that this first error term is of scale $\Or(N_\vk^{-\frac13})$. 

\begin{lem}[Amplitude error in a single iteration]
\label{lem:thm1_amplitude_exact}
        For an arbitrary amplitude $t\in \mathbb{T}(\Omega^*)^{n_\text{occ}\times n_\text{occ}\times n_\text{vir}\times n_\text{vir}}$, 
        the finite-size error in the next iteration amplitude calculation when using $t$ can be estimated as  
        \begin{equation}\label{eqn:thm1_amplitude_exact}
                \left\|
                        \mathcal{M}_{\mathcal{K}}\mathcal{F}_\text{TDL}(t) - \mathcal{F}_{N_\vk}(\mathcal{M}_{\mathcal{K}}t)
                \right\|_\infty
                \leqslant 
                C N_\vk^{-\frac13}.
        \end{equation}
\end{lem}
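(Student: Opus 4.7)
The plan is to fix external indices $i,j,a,b$ and mesh points $\vk_i,\vk_j,\vk_a \in \mathcal{K}$ and interpret each entry of $\mathcal{M}_\mathcal{K}\mathcal{F}_\text{TDL}(t) - \mathcal{F}_{N_\vk}(\mathcal{M}_\mathcal{K}t)$ as a trapezoidal quadrature error on $\Omega^*$ applied to an integrand in the remaining internal momentum (call it $\vk_k$). Concretely, expanding the amplitude equation \cref{eqn:amplitude_ccd} as a constant term plus linear and quadratic terms in the input amplitude, the constant term is a pure ERI at the external momenta and therefore contributes no error, while every other term contributes an error of the form
\[
\frac{1}{|\Omega^*|}\,\mathcal{E}_{\Omega^*}\!\left(g_{ijab,\vk_i\vk_j\vk_a}(\vk_k),\ \mathcal{K}\right),
\]
where $g_{ijab,\vk_i\vk_j\vk_a}(\vk_k)$ is an $\Omega^*$-periodic function built from ERIs, the orbital-energy denominator, and one or two evaluations of $t$ along momentum-conserving loops. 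The lemma thus reduces to a uniform (in external labels) bound on this single quadrature error.

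I would next analyze the singularity structure of $g$ as a function of $\vk_k$. All nonsmooth behavior comes either from an ERI, which by \cref{eqn:eri} is smooth in its arguments except at the zero-momentum-transfer locus where it has an algebraic singularity of order $-2$ in the transfer variable, or from the input amplitude $t$, which by the assumption $t \in \mathbb{T}(\Omega^*)^{n_\text{occ}\times n_\text{occ}\times n_\text{vir}\times n_\text{vir}}$ and \cref{lem:thm1_nonsmooth} carries only a mild order-$0$ singularity at the coincidence of its particle and hole momenta. For every diagram appearing in $\mathcal{F}$, at most one ERI factor uses $\vk_k$ as its transfer variable, so that $g$ is $\Omega^*$-periodic, smooth away from finitely many isolated points in $\Omega^*$, and has algebraic singularity of order at most $-2$ in $\vk_k$ at those points, with the locations and prefactors depending smoothly on the external parameters $\vk_i,\vk_j,\vk_a$.

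I would then invoke the singular trapezoidal-rule estimate alluded to after \cref{eqn:energy_finite} (the Poisson-summation based analysis of the type in~\cite{Izzo2022}): for a periodic integrand on a three-dimensional torus with isolated algebraic singularities of order $-2$ and otherwise smooth, the trapezoidal rule on a Monkhorst--Pack grid of $N_\vk$ points incurs quadrature error $\Or(N_\vk^{-1/3})$. Because the singularity orders and the smooth prefactors are uniformly controlled in the external indices and mesh points, taking a maximum yields precisely \cref{eqn:thm1_amplitude_exact}.

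The main obstacle is the singularity bookkeeping for the quadratic diagrams, which contain two ERI factors and two amplitude factors whose singular loci in $\vk_k$ may collide. The hard part will be verifying that, in every such collision, the combined singularity order remains strictly greater than $-3$ (so that the integral is locally absolutely convergent and the $\Or(N_\vk^{-1/3})$ singular quadrature estimate is applicable) and is in fact no worse than $-2$. This is where the order-$0$ bound on $t$ from \cref{lem:thm1_nonsmooth} is essential: it prevents the amplitude from contributing any extra negative power to the combined singularity and reduces the analysis to counting ERI transfer-momentum poles diagram by diagram (particle--hole ring, particle--particle ladder, and hole--hole ladder), with external-parameter dependence treated through the smoothness of the $\mathbb{T}(\Omega^*)$-regular part of $t$.
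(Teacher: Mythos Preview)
Your high-level strategy is the paper's: fix external labels, write each entry of $\mathcal{M}_\mathcal{K}\mathcal{F}_\text{TDL}(t)-\mathcal{F}_{N_\vk}(\mathcal{M}_\mathcal{K}t)$ as a trapezoidal quadrature error, classify integrand singularities coming from ERIs and from $t$, and invoke a Poisson-summation estimate to get $\Or(N_\vk^{-1/3})$ from the order-$(-2)$ ERI pole. Two structural points in your plan would fail as written, though.

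First, the quadratic terms are not single-variable quadratures and do not contain two ERIs. Each quadratic contribution in \cref{eqn:amplitude_ccd_tdl} has \emph{one} ERI (always of the form $\braket{k\vk_k,l\vk_l|c\vk_c,d\vk_d}$, hence non-matching occupied/virtual bands and order-$0$ singularity) multiplied by \emph{two} amplitude entries, and is integrated over \emph{two} internal momenta. So the relevant object is $\mathcal{E}_{\Omega^*\times\Omega^*}(\cdot,\mathcal{K}\times\mathcal{K})$, not $\mathcal{E}_{\Omega^*}(\cdot,\mathcal{K})$. After a change of variables to the momentum transfers $\vq_1,\vq_2$ of the ERI and one amplitude, the integrand factors as $f_1(\vq_1,\vq_2)f_2(\vq_1,\vq_2)$ (or with an additional $f_3(\vq_1,\vq_2\pm\vq_1)$), where $f_i$ is singular only at $\vq_i=\bm{0}$ with order $0$. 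A one-variable singular trapezoidal estimate does not cover this; the paper proves separate bivariate lemmas (types 4 and 5 in \cref{tab:class_integral}) showing these contribute $\Or(N_\vk^{-1})$, not $\Or(N_\vk^{-1/3})$. Your ``collision of singular loci'' worry is therefore aimed at the wrong place: the quadratic ERI never carries an order-$(-2)$ pole, so no collision pushes the order below $-2$.

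Second, even among the \emph{linear} terms your single-point estimate is not enough. For the ladder-type linear terms (e.g.\ the 4h2p term $\braket{KL|IJ}t_{KL}^{AB}$), the ERI is singular at $\vk_k=\vk_i$ while $t_{klab}$ is singular at $\vk_k=\vk_a$, so the univariate integrand has \emph{two} distinct singular points of orders $-2$ and $0$. The paper handles this with a dedicated product lemma (type 3 in \cref{tab:class_integral}) for $f_1f_2$ with singularities at different mesh points, which is where the $\Or(N_\vk^{-1/3})$ actually enters; the other linear classes (zero or one singular point in $\vk_k$) contribute super-algebraically or $\Or(N_\vk^{-1})$. Without this case split and the matching band-index bookkeeping ($k=i,l=j$ versus partial or no match), you cannot conclude uniformity of the constant in the external momenta.
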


To address the second error term in \cref{eqn:amplitude_splitting}, the goal is to show that the application of $\mathcal{F}_{N_\vk}$ propagates the error in the 
CCD$(n-1)$ amplitude calculation in a controlled way.  
Specifically, noting that $\mathcal{F}_{N_\vk}(T)$ consists of constant, linear and quadratic terms of $T$ (see \cref{eqn:amplitude_ccd} for details), we can use 
its explicit definition to show that the second error term can be bounded by the error in the CCD$(n-1)$ amplitude calculation.

\begin{lem}[Lipschitz continuity of the finite CCD iteration mapping]
\label{lem:thm1_amplitude_prev}
For two arbitrary amplitude tensors $T, S \in 
\mathbb{C}^{n_\text{occ}\times n_\text{occ} \times n_\text{vir} \times n_\text{vir}\times N_\vk\times N_\vk \times N_\vk}$, 
the iteration map $\mathcal{F}_{N_\vk}$ in the finite case satisfies 
\begin{equation}\label{eqn:thm1_amplitude_prev}
        \|\mathcal{F}_{N_\vk}(T) - \mathcal{F}_{N_\vk}(S)\|_\infty \leqslant C \left(1 + \|T\|_\infty + \|S\|_\infty \right) \| T - S\|_\infty. 
\end{equation}
\end{lem}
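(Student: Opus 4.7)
The plan is to exploit the explicit polynomial structure of $\mathcal{F}_{N_\vk}$ as the right-hand side of the discrete CCD amplitude equation~\cref{eqn:amplitude_ccd}, namely to write
\[
\mathcal{F}_{N_\vk}(T) \;=\; C^{N_\vk} \;+\; \mathcal{L}^{N_\vk}(T) \;+\; \mathcal{Q}^{N_\vk}(T,T),
\]
where $C^{N_\vk}$ is the $T$-independent MP2-like source term, $\mathcal{L}^{N_\vk}$ is a linear map, and $\mathcal{Q}^{N_\vk}$ is a (symmetric) bilinear map. The constant $C^{N_\vk}$ cancels in the difference $\mathcal{F}_{N_\vk}(T) - \mathcal{F}_{N_\vk}(S)$. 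For the linear piece I use $\mathcal{L}^{N_\vk}(T) - \mathcal{L}^{N_\vk}(S) = \mathcal{L}^{N_\vk}(T-S)$ and aim for $\|\mathcal{L}^{N_\vk}(U)\|_\infty \leqslant C\|U\|_\infty$. For the quadratic piece I use the standard bilinear telescoping
\[
\mathcal{Q}^{N_\vk}(T,T) - \mathcal{Q}^{N_\vk}(S,S) \;=\; \mathcal{Q}^{N_\vk}(T-S,\,T) + \mathcal{Q}^{N_\vk}(S,\,T-S),
\]
together with a bilinear bound $\|\mathcal{Q}^{N_\vk}(U,V)\|_\infty \leqslant C\|U\|_\infty\|V\|_\infty$. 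Combining the three contributions gives the stated prefactor $C(1 + \|T\|_\infty + \|S\|_\infty)\|T-S\|_\infty$.

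The quantitative core is therefore to establish that every linear and every bilinear diagram appearing in $\mathcal{F}_{N_\vk}$ defines a map on amplitude tensors whose norm in the entrywise $\|\cdot\|_\infty$ is bounded uniformly in $N_\vk$. Each such term has the schematic form
\[
\bigl[\mathcal{L}^{N_\vk}(U)\bigr]_{ijab}(\vk_i,\vk_j,\vk_a) \;=\; \frac{1}{\varepsilon^{a\vk_a b\vk_b}_{i\vk_i j\vk_j}}\,\cdot\,\frac{1}{N_\vk^{p}} \sum_{\substack{\vk\text{'s in }\mathcal{K}\\ \text{contracted bands}}}\!\!(\text{product of ERIs})\,\cdot\, U_{\bullet\bullet\bullet\bullet}(\bullet,\bullet,\bullet),
\]
and analogously for $\mathcal{Q}^{N_\vk}$ with a product of two amplitude tensors. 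I would pull the amplitude entries out of the sum using $\|U\|_\infty$ (respectively $\|U\|_\infty \|V\|_\infty$), use the direct-gap assumption to bound the outer energy denominator $1/\varepsilon^{a\vk_a b\vk_b}_{i\vk_i j\vk_j}$ by a constant uniformly in $\vk_i, \vk_j, \vk_a$, note that the band-index sums run over the fixed finite set $\{1,\ldots,n_\text{occ}\}$ or $\{1,\ldots,n_\text{vir}\}$, and reduce the remaining task to uniform-in-$N_\vk$ bounds on normalized sums of ERI products over Brillouin-zone momenta.

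For that last ingredient I would use that although a single ERI carries a Coulomb factor $1/|\vq+\vG|^2$ that can be large at small momentum transfer, the pair products $\hat{\varrho}$ are smooth by assumption and the resulting singularity is integrable in three dimensions. Consequently the normalized sums $\frac{1}{N_\vk}\sum_{\vk}$ of such ERIs against bounded coefficients are uniformly bounded by the corresponding Brillouin-zone integrals, with an $O(1)$ discretization error controlled exactly as in the MP2 analysis of~\cite{XingLiLin2021}. For the bilinear diagrams with two contracted ERIs, the same reasoning applies after observing that a product of two integrable singularities at distinct momentum transfers remains integrable, so the normalization $\frac{1}{N_\vk^2}$ absorbs the full double sum.

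The main obstacle is essentially bookkeeping: the CCD amplitude equation contains a modest zoo of distinct linear and bilinear diagrams, distinguished by which particle/hole lines are contracted and which energy denominators appear, and each must be inspected individually to confirm that, after the amplitude factors are pulled out in the max norm, the residual ERI/denominator structure yields a bounded normalized momentum sum. No new technical tools are needed beyond the direct-gap lower bound on energy denominators, the smoothness of the Bloch pair products, and the three-dimensional integrability of the Coulomb kernel; the lemma makes no claim about a rate in $N_\vk$, only about the Lipschitz constant, so the delicate trapezoidal-rule singular-quadrature arguments underlying \cref{lem:thm1_energy} and~\cref{lem:thm1_amplitude_exact} are not required here.
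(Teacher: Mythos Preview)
Your proposal is correct and follows essentially the same approach as the paper: decompose $\mathcal{F}_{N_\vk}$ into constant, linear, and quadratic parts, cancel the constants, use linearity and the bilinear telescoping identity, then pull out the amplitude factors in $\|\cdot\|_\infty$ and bound the residual normalized ERI sums by the corresponding Brillouin-zone integrals using the direct-gap bound on $1/\varepsilon_{IJ}^{AB}$ and the three-dimensional integrability of the Coulomb kernel. One small inaccuracy in your schematic: each linear or quadratic term in the CCD amplitude equation carries exactly \emph{one} ERI (not a product of ERIs), so after extracting the amplitudes only a single Coulomb singularity remains in the momentum sum, which is precisely what the paper bounds in its treatment of the 4h2p linear and quadratic examples.
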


Substitute $t = t_{n-1}$ in \cref{eqn:thm1_amplitude_exact} and $T = T_n, S= \mathcal{M}_\mathcal{K}t_{n-1}$ in \cref{eqn:thm1_amplitude_prev}, 
the error splitting in \cref{eqn:amplitude_splitting} can then be further estimated as 
\begin{equation}\label{eqn:amplitude_splitting2}
        \left\|
                \mathcal{M}_{\mathcal{K}}t_{n} - T_n
        \right\|_\infty
        \leqslant 
        C 
        \left( 
                1 + \|T_{n-1}\|_\infty + \|t_{n-1}\|_\infty
        \right)
        \left\|
                \mathcal{M}_\mathcal{K}t_{n-1} - T_{n-1}
        \right\|_\infty
        +
        C N_\vk^{-\frac13},
\end{equation}
where the second constant $C$ depends on the exact CCD$(n-1)$ amplitude $t_{n-1}$. 
Combining this recursive relation and the fact that $\mathcal{M}_\mathcal{K}t_0 = T_0$, the finite-size error in the amplitude calculation can be estimated as 
\[
        \left\|
                \mathcal{M}_{\mathcal{K}}t_{n} - T_n
        \right\|_\infty       
        \leqslant 
        C N_\vk^{-\frac13},
        \qquad 
        \forall n > 0,
\]
where constant $C$ depends on $t_m$ with $m=0,1,2,\ldots, n-1$. 
Lastly, we finish our proof of \cref{thm:error_ccd} by combining this estimate, \cref{lem:thm1_energy} with $t = t_{n-1}$, and the error splitting in \cref{eqn:energy_splitting}.

\section{Main technical tools}
Our main idea is to interpret the finite size errors in the CCD$(n)$ energy and amplitude calculations as numerical quadrature errors of trapezoidal
rules over certain singular integrals. 
Specifically, all the averaged summations over $\mathcal{K}$ in $\mathcal{G}_{N_\vk}$ and $\mathcal{F}_{N_\vk}$
are trapezoidal rules that approximate corresponding integrations over $\Omega^*$ in $\mathcal{G}_\text{TDL}$ and $\mathcal{F}_\text{TDL}$.
The problem is thus reduced to estimating the quadrature errors of trapezoidal rules over the integrands 
defined in $\mathcal{G}_\text{TDL}$ and $\mathcal{F}_\text{TDL}$ which consist of ERIs and exact amplitudes. 

In general, the asymptotic error of a trapezoidal rule depends on boundary condition and smoothness property of the integrand.
In the ideal case when an integrand is periodic and smooth, the quadrature error decays super-algebraically, i.e., faster than $N_\vk^{-l}$ with any $l > 0$, according to the 
standard Euler-Maclaurin formula.
(See \cref{lem:quaderror0} with a simple Fourier analysis explanation.) 
All the integrands defined in $\mathcal{G}_\text{TDL}$ and $\mathcal{F}_\text{TDL}$ turn out be periodic but have point singularities.
Therefore it is important to characterize the singularity structure of ERIs and exact amplitudes that constitute all these concerned integrands.

The proof of \cref{thm:error_ccd} involves two main technical challenges: describing the singularity structure of exact amplitudes in \cref{lem:thm1_nonsmooth}, and the quadrature error estimates for integrands defined in energy and amplitude calculations using exact amplitudes in \cref{lem:thm1_energy} and \cref{lem:thm1_amplitude_exact}.
For the first challenge, we define a class of functions with algebraic singularity of certain orders in \cref{sub:fractional}. 
For the second challenge, we summarize five general integral forms from the energy and amplitude calculations in \cref{sub:quaderror} and provide tight quadrature error estimates based on 
Poisson summation formula.

\subsection{Algebraic singularity}\label{sub:fractional}


Consider an ERI $\braket{n_1\vk_1,n_2\vk_2|n_3\vk_3,n_4\vk_4}$ as a periodic function of $\vk_1, \vk_2, \vq = \vk_3 - \vk_1$ over $\Omega^*$ 
while $\vk_4 = \vk_2 - \vq$ using the crystal momentum conservation.
By its definition, this ERI can be split as
\begin{equation}\label{eqn:eri_expansion}
        \braket{n_1\vk_1,n_2\vk_2|n_3\vk_3,n_4\vk_4}  
        = 
        \frac{4\pi}{\abs{\Omega}} 
        \frac{
        \hat{\varrho}_{n_1\vk_1,n_3(\vk_1 + \vq)}(\bm{0}) \hat{\varrho}_{n_2\vk_2,n_4(\vk_2 - \vq)}(\bm{0})
        }{
                |\vq|^2
        }
        + 
        \frac{4\pi}{\abs{\Omega}}
        \sum_{\vG\in\mathbb{L}^*\setminus \{\bm{0}\}}
        \dfrac{\cdots}{|\vq+\vG|^2},
\end{equation}
where all the terms with $\vG\neq \bm{0}$ are smooth with respect to $\vk_1, \vk_2, \vq \in \Omega^*$ and the singularity of the ERI only comes from the first fraction term at $\vq = \bm{0}$. 
The numerator of this fraction is smooth with respect to $\vk_1, \vk_2, \vq \in \Omega^*$ (note the assumption that $\psi_{n\vk}(\vrr)$ is smooth with respect to $\vk$)
and is of scale $\Or(|\vq|^a)$ with $a \in \{0, 1, 2\}$ near $\vq = \bm{0}$ using orbital orthogonality.  
The exact value of $a$ depends on the relation between band indices $(n_1, n_2)$ and $(n_3, n_4)$. 
As can be verified by direct calculation, this fraction and its derivatives over $\vq$ with any fixed $\vk_1, \vk_2$ satisfy the following 
characterization. 
\begin{defn}[Algebraic singularity for univariate functions]\label{def:fractional}
        A function $f(\vx)$ has \textbf{algebraic singularity of order} $\gamma\in\RR$ at $\vx_0 \in \mathbb{R}^d$  if
        there exists $\delta > 0$ such that 
        \[
        \left|
        \dfrac{\partial^\valpha}{\partial \vx^{\valpha}} f(\vx)  
        \right|
        \leqslant 
        C_\valpha |\vx - \vx_0|^{\gamma - |\valpha|},
        \qquad \forall 0 < |\vx - \vx_0| < \delta, \ \forall \valpha \geqslant 0,
        \]
        where constant $C_\valpha$ depends on $\delta$ and the non-negative $d$-dimensional derivative multi-index $\valpha$.  
        For brevity, $f$ is also said to be singular (or nonsmooth) at $\vx_0$ with order $\gamma$.
\end{defn}
\begin{rem}
        In integral equations, algebraic singularity is commonly used to describe the asymptotic behavior of a kernel function near a singular point.
        The above definition slightly generalizes this concept to additionally include the asymptotic behaviors of all the derivatives. 
        Note that when $\gamma > 0$, $f(\vx)$ is continuous but nonsmooth at $\vx_0$ since its derivatives can be singular at this point.
        In this case, we slightly abuse the name and still refer to $\vx_0$ as a point of algebraic singularity.
\end{rem}

A simple example of such nonsmooth functions is $p(\vx)/|\vx|^2$ where $p(\vx)$ is smooth and of scale $\Or(|\vx|^{\gamma+2})$ 
near $\vx = \bm{0}$.
Using this concept, the leading fraction term in \cref{eqn:eri_expansion} is nonsmooth at $\vq = \bm{0}$ with order $\gamma\in\{-2,-1,0\}$. 
Since the smooth terms in \cref{eqn:eri_expansion} do not change the inequalities in \cref{def:fractional} qualitatively, the ERI example is also nonsmooth at 
$\vq = \bm{0}$ with order $\gamma$. 
In addition, to connect the algebraic singularities of the ERI example with varying $\vk_1, \vk_2 \in \Omega^*$, we further introduce the algebraic singularity with respect 
to one variable for a multivariate function. 
\begin{defn}[Algebraic singularity for multivariate functions]\label{def:fractional2}
        A function $f(\vx, \vy)$ is smooth with respect to $\vy \in V_Y\subset \mathbb{R}^{d_y}$ for any fixed $\vx$ and has algebraic singularity of order $\gamma$ with respect to $\vx$ at $\vx_0 \in \mathbb{R}^{d_x}$ 
        if there exists $\delta > 0$ such that 
        \[
                \left|
                \dfrac{\partial^\valpha}{\partial \vx^{\valpha}} \left( \dfrac{\partial^\vbeta}{\partial\vy^\vbeta}f(\vx, \vy)  \right)
                \right|
                \leqslant 
                C_{\valpha,\vbeta} |\vx - \vx_0|^{\gamma - |\valpha|},  \quad \forall 0<|\vx - \vx_0|< \delta, \forall \vy \in V_Y, \forall \valpha,\vbeta\geqslant 0,
        \]
        where constant $C_{\valpha,\vbeta}$ depends on $\delta$, $\valpha$ and $\vbeta$.
        Compared to the univariate case in \cref{def:fractional}, the key additions are the shared algebraic singularity of partial derivatives over $\vy$ at $\vx = \vx_0$ 
        with order $\gamma$ and the independence of $C_{\valpha, \vbeta}$ on $\vy\in V_Y$. 
\end{defn}

A simple example of such nonsmooth functions is $p(\vx,\vy)/|\vx|^2$ where $p(\vx, \vy)$ is smooth and of scale $\Or(|\vx|^{\gamma+2})$ near 
$\vx = \bm{0}$.  
The first fraction term in \cref{eqn:eri_expansion} is of this form with $\vx = \vq$, $\vy = (\vk_i, \vk_j)$, and $\gamma \in \{-2,-1,0\}$. 
Therefore the ERI example is smooth everywhere with respect to $\vk_i, \vk_j, \vq\in\Omega^*$ except at $\vq = \bm{0}$ with order $\gamma\in\{-2,-1,0\}$. 
If treating the ERI as a function of $\vk_1,\vk_2,\vk_3$, then the ERI is smooth everywhere except at $\vk_3 = \vk_1$ with order $\gamma$. 
This defines the algebraic singularity used in $\mathbb{T}(\Omega^*)$ in \cref{lem:thm1_nonsmooth}. 

\cref{lem:thm1_nonsmooth} states that all entries of the exact CCD$(n)$ amplitude, $[t_n]_{ijab}(\vk_i, \vk_j, \vk_a)$, are 
smooth everywhere in $\Omega^*\times\Omega^*\times\Omega^*$ except at $\vq = \vk_a - \vk_i = \bm{0}$ with order $0$ (as specified in $\mathbb{T}(\Omega^*)$). 
Due to similar singularity structures between CC amplitudes and ERIs, we also refer to $\vq$ as the \textit{momentum transfer of the amplitude}. 
Recall that the exact CCD$(1)$ amplitude $t_1 =  \left\{\braket{a\vk_a, b\vk_b | i\vk_i, j\vk_j}/\varepsilon_{i\vk_i,j\vk_j}^{a\vk_a, b\vk_b}\right\}_{ijab}$ 
has entries in $\mathbb{T}(\Omega^*)$ and $t_n$ is defined by recursively applying $\mathcal{F}_\text{TDL}$ to $t_1$. 
It is thus sufficient to prove that
\[
        \mathcal{F}_\text{TDL}(t)\in \mathbb{T}(\Omega^*)^{n_\text{occ}\times n_\text{occ}\times n_\text{vir}\times n_\text{vir}}, \quad 
        \forall t \in \mathbb{T}(\Omega^*)^{n_\text{occ}\times n_\text{occ}\times n_\text{vir}\times n_\text{vir}}.
\]

For each set of $(i,j,a,b)$, $[\mathcal{F}_\text{TDL}(t)]_{ijab}(\vk_i, \vk_j, \vk_a)$ consists of constant, linear, and quadratic terms of $t$ (see \cref{eqn:amplitude_ccd_tdl}). 
It turns out that each of these terms as a function of $\vk_i, \vk_j, \vq = \vk_a - \vk_i$ is smooth everywhere except at $\vq = \bm{0}$ with order $0$. 
The constant term is an MP2 amplitude entry $[t_1]_{ijab}$ and can be verified directly.
All the linear terms are of integral form with integrands being the product of an ERI and an amplitude entry, and can be categorized into three classes 
with representative examples (ignoring orbital energy fraction and constant prefactor) as
\begin{align}
        & \int_{\Omega^*}\ud\vk_k \sum_{kc}\braket{a\vk_a, k\vk_k | i\vk_i, c\vk_c} t_{kjcb}(\vk_k, \vk_j, \vk_c),
        \\
        & \int_{\Omega^*}\ud\vk_k \sum_{kc}\braket{a\vk_a, k\vk_k | i\vk_i, c\vk_c} t_{kjbc}(\vk_k, \vk_j, \vk_b),
        \\
        & \int_{\Omega^*}\ud\vk_k \sum_{kl}\braket{k\vk_k, l\vk_l | i\vk_i, j\vk_j} t_{klab}(\vk_k, \vk_l, \vk_a),
        \label{eqn:integral_4h2p_linear}
\end{align}
The difference among the three classes is the singularity structure of the integrand  with respect to the integration variable, e.g., $\vk_k$ in the above examples. 
For any fixed $\vk_i,\vk_j,\vq\neq\bm{0}$, the singular points of integrands in the three examples above with respect to $\vk_k$ are: (1) none, (2) $\vk_k = \vk_b$, 
(3) $\vk_k = \vk_i$ and $\vk_k = \vk_a$. 
Hence the three integrands are nonsmooth with respect to $\vk_k$ at zero, one, and two points, respectively.
The goal is to show that derivatives of these integrals with respect to $\vk_i, \vk_j, \vq \in \Omega^*$
exist except at $\vq = \bm{0}$ and satisfy the algebraic singularity condition in \cref{def:fractional2} with order $\gamma = 0$. 
The first two classes can be addressed using the Leibniz integral rule for the differentiation under integral sign 
and direct derivative estimates after the interchange of differentiation and integration operations. 
The third class requires more involved analysis based on an additional technical lemma in \cref{app:nonsmooth_integral}.
\Cref{fig:diagram} plots diagrams of all the linear amplitude terms where the first class contains (a), the second contains (b) and (c), and the third contains (d), (e) and (f). 

\begin{figure}
        \subfloat[${\sum_{KC}}\braket{AK|IC}t_{KJ}^{CB}$]{
                \includegraphics[height=0.15\textheight]{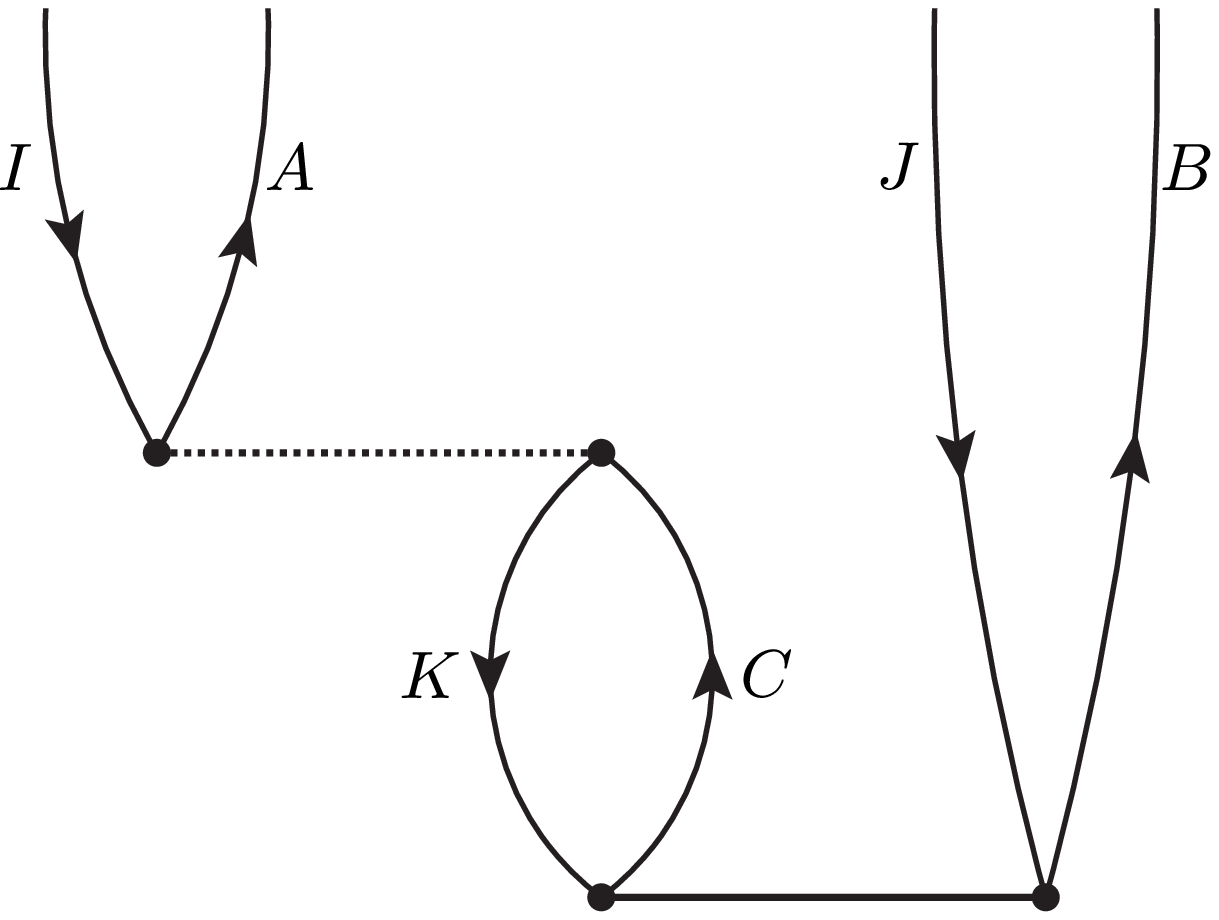}
        }
        \hspace{3em}
        \subfloat[${\sum_{KC}}\braket{AK|CI}t_{KJ}^{CB}$]{
                \includegraphics[height=0.15\textheight]{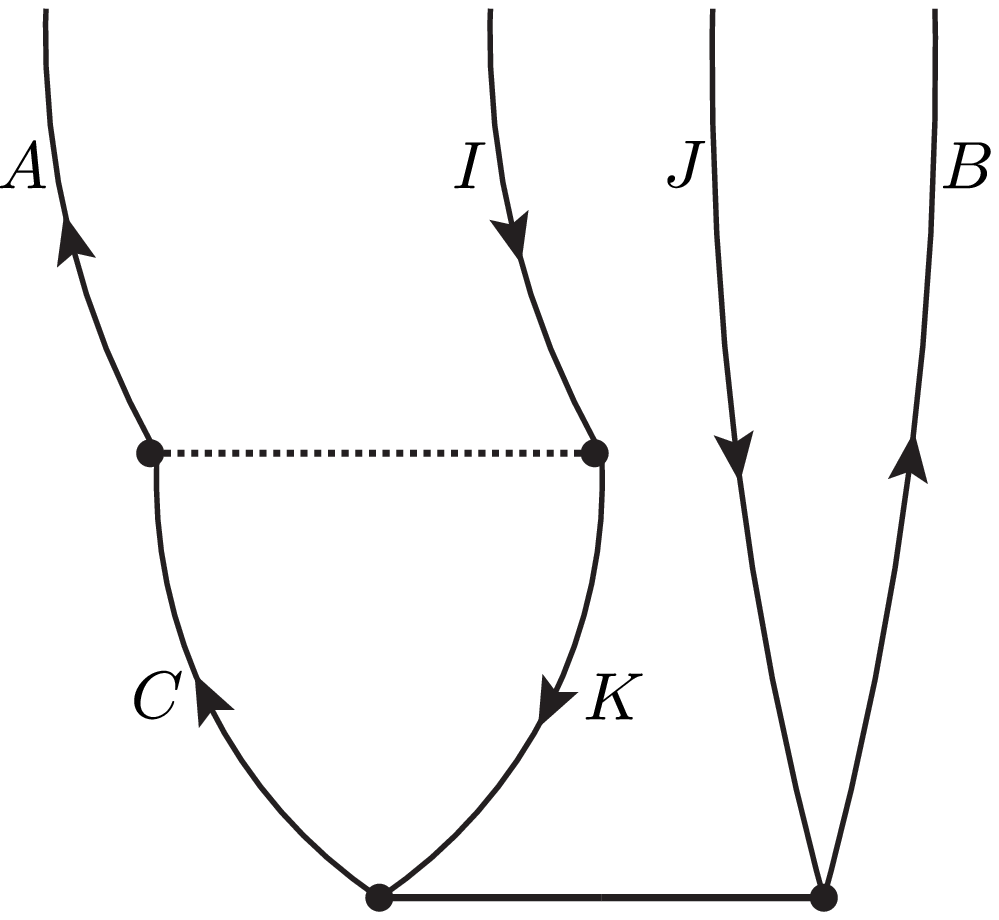}
        }
        \hspace{3em}
        \subfloat[${ \sum_{KC}}\braket{AK|IC}t_{KJ}^{BC}$]{
                \includegraphics[height=0.15\textheight]{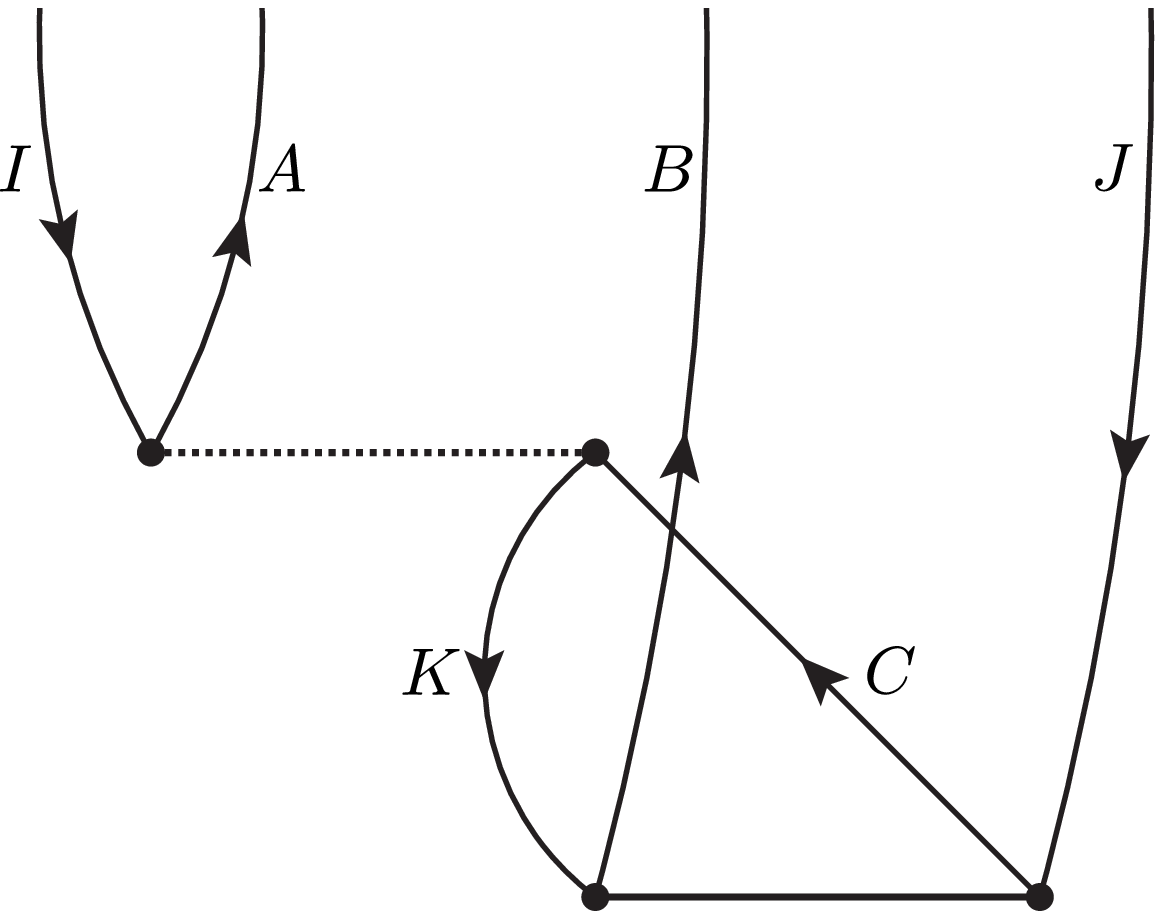}
        }
        \\
        \subfloat[${ \sum_{KC}}\braket{AK|CJ}t_{IK}^{CB}$]{
                \hspace{0.5em}
                \includegraphics[height=0.15\textheight]{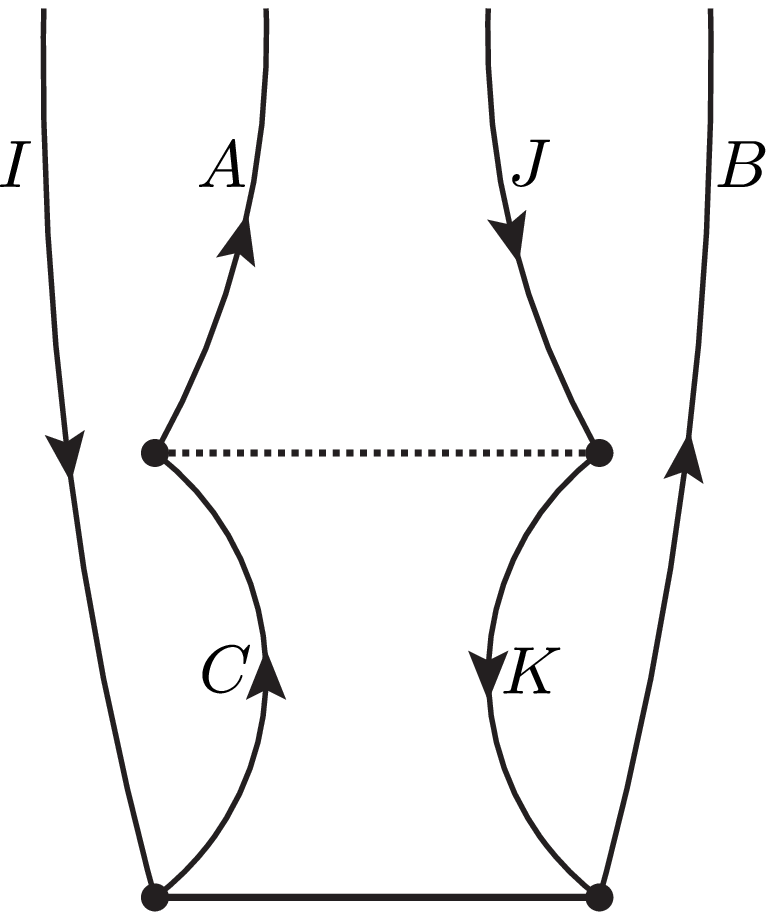}
                \hspace{0.5em}
        }
        \hspace{4em}
        \subfloat[${ \sum_{KL}}\braket{KL|IJ}t_{KL}^{AB}$]{
                \hspace{0.5em}
                \includegraphics[height=0.15\textheight]{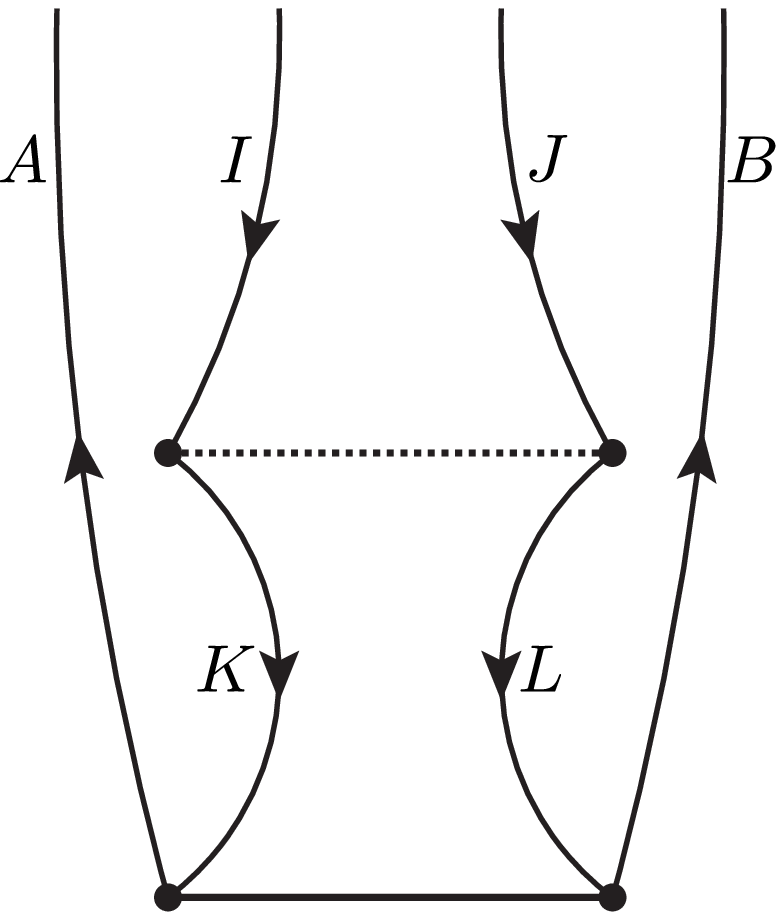}
                \hspace{0.5em}
        }
        \hspace{4em}
        \subfloat[${ \sum_{CD}}\braket{AB|CD}t_{IJ}^{CD}$]{
                \hspace{0.5em}
                \includegraphics[height=0.15\textheight]{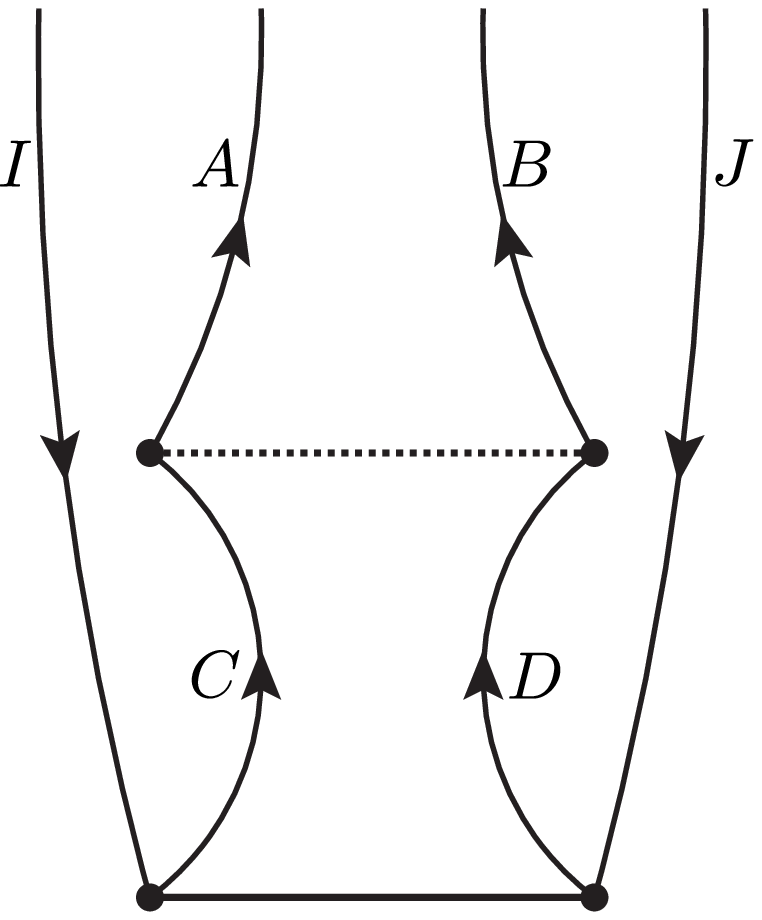}
                \hspace{0.5em}
        }

        \caption{Diagrams of all the linear terms in the amplitude calculation. The dashed horizontal line denotes the ERI, and the solid horizontal line denotes the $t$ amplitude.
        The permuted amplitudes from (a), (b), (c), and (d) are not plotted.
        A capital letter $P$ refers to an orbital index $(p, \vk_p)$.
        The amplitude calculation in (f) can be formulated as an integral over momentum vector $\vk_c$, and in all other subplots as integrals over momentum vector $\vk_k$.
        }
        \label{fig:diagram}
\end{figure}

All the quadratic terms are also of integral form with integrands being the product of an ERI and two amplitude entries, and 
the analysis of their smoothness property can be decomposed into two subproblems similar to those for the linear terms above.  
For example, the 4h2p quadratic term is defined as 
\begin{equation}
        \dfrac{1}{|\Omega^*|^2}\int_{\Omega^*}\ud\vk_k\
        \left(
                \int_{\Omega^*}\ud\vk_c \sum_{klcd} \braket{k\vk_k, l\vk_l | c\vk_c, d\vk_d}
                t_{ijcd}(\vk_i, \vk_j, \vk_c)
        \right) t_{klab}(\vk_k, \vk_l, \vk_a).
\end{equation}
The  term in the parenthesis is a linear term as a function of $\vk_i,\vk_j, \vq, \vk_k$ and is singular at $\vk_k = \vk_i$ with order $0$. 
In other words, its singularity structure is the same as the ERI $\braket{k\vk_k, l\vk_l | i\vk_i, j\vk_j}$ in the 4h2p linear term in \cref{eqn:integral_4h2p_linear}. 
The overall 4h2p quadratic term thus resembles the 4h2p linear term, simply with the ERI replaced by the term in the parenthesis above, and the analysis for linear terms still can be
applied to prove its algebraic singularity of order $0$ at $\vq = \bm{0}$.

\subsection{Quadrature error for periodic functions with algebraic singularity}\label{sub:quaderror}

Two quadrature error estimate problems need to be addressed for \cref{thm:error_ccd}: 
the error in the energy calculation using exact amplitude in \cref{lem:thm1_energy}
and the error in the single iteration amplitude calculation using exact amplitude in \cref{lem:thm1_amplitude_exact}.
Integrands in these two problems are all products of orbital energy fractions, ERIs, and exact amplitudes, and are 
periodic with respect to involved momentum vectors over $\Omega^*$. 
Therefore, their smoothness properties determine the dominant quadrature errors with a trapezoidal rule. 
Specifically, the integrand nonsmoothness only comes from the algebraic singularity of ERIs and amplitudes at their zero momentum transfer points 
(and their periodic images). 
The dominant quadrature errors thus come from the numerical quadrature over integration variables dependent on 
the momentum transfer vectors of the included ERIs and amplitudes. 

By proper change of variables and splitting of error terms, all the quadrature error estimates in \cref{lem:thm1_energy} and \cref{lem:thm1_amplitude_exact} 
can be summarized as those for the five types of integrals described in \cref{tab:class_integral}.
The quadrature errors for these five integrals are studied separately in \cref{lem:quaderror0,lem:quaderror1,lem:quaderror2,lem:quaderror3,lem:quaderror4} in \cref{app:quadrature_error}.
In our finite-size error analysis, we have $V = \Omega^*$, $d=3$, and $m = N_\vk^{-\frac13}$. 

\begin{table}[htbp]
        \caption{Five types of integrals in the quadrature error estimate problems that appear in CCD energy and amplitude calculations.
        All functions are assumed to be periodic with respect to $V\subset \mathbb{R}^d$ and are smooth everywhere except at the singular points.
        Parameter $d$ denotes the domain dimension and $m$ denotes the number of points along each dimension in the uniform mesh used 
        for trapezoidal quadrature rule (thus the mesh has $m^d$ points). 
        The ``Estimate'' column gives the quadrature error estimates for the trapezoidal rule using an $m^d$-sized mesh, which are
        obtained in the lemma specified in the ``Lemma'' column.
        }

        \label{tab:class_integral}
        \def\arraystretch{2.5}
        \begin{tabular}{p{0.45\textwidth}@{\hskip 1em}p{0.24\textwidth}@{\hskip 1em}cc}
                \toprule
                Description & Singular points and order & Estimate & Lemma \\
                \midrule
                $\int_{V}\ud\vx f(\vx)$ & None
                & Super-Algebraic& \cref{lem:quaderror0}\\
                $\int_{V}\ud\vx f(\vx)$ & $\vx = \bm{0}$ of order $\gamma$
                & $m^{-(d+\gamma)}$ & \cref{lem:quaderror1} \\
                $\int_{V}\ud\vx f_1(\vx)f_2(\vx)$ & \makecell[l]{$f_1(\vx)$: $\vx = \bm{0}$ of order $\gamma$;\\ $f_2(\vx)$: $\vx = \vz$ of order $0$} & $m^{-(d+\gamma)}$ & \cref{lem:quaderror2}\\
                $\int_{V\times V}\ud\vx_1\ud\vx_2 f_1(\vx_1, \vx_2)f_2(\vx_1, \vx_2)$ & $f_i(\vx_1, \vx_2)$:  $\vx_i = \bm{0}$ of order $\gamma_i$, $i = 1,2$ 
                & $m^{-(d+\min_i \gamma_i)}$ & \cref{lem:quaderror3}\\
                $\int_{V\times V}\ud\vx_1\ud\vx_2 f_1(\vx_1, \vx_2)f_2(\vx_1, \vx_2) f_3(\vx_1, \vx_2 \pm \vx_1)$ & \makecell[l]{$f_i(\vx_1, \vx_2)$:  $\vx_i = \bm{0}$ of order \\ $\gamma_i$, $i = 1,2$; \\ $f_3(\vx_1, \vz)$: $\vz = \bm{0}$ of order $0$}
                & $m^{-(d+\min_i \gamma_i)}$ & \cref{lem:quaderror4}\\
                \bottomrule
        \end{tabular}
\end{table}

To demonstrate the classification in \cref{tab:class_integral}, we provide three examples.
\paragraph*{Example 1:} The 4h2p linear terms in the amplitude calculation with any fixed $i,j,a,b$ and $\vk_i,\vk_j,\vk_a$
using an exact amplitude $t \in \mathbb{T}(\Omega^*)^{n_\text{occ}\times n_\text{occ} \times n_\text{vir} \times n_\text{vir}}$ is of the form  
\begin{equation}\label{eqn:amplitude_4h2p_2}
        \int_{\Omega^*}\ud\vk_k \sum_{kl}\braket{k\vk_k, l\vk_l | i\vk_i, j\vk_j} t_{klab}(\vk_k, \vk_l, \vk_a).
\end{equation}
The ERI and the amplitude have momentum transfers $\vq_1 = \vk_i - \vk_k$ and $\vq_2 = \vk_a - \vk_k$, respectively.
By the change of variable $\vk_k \rightarrow \vk_i - \vq_1$, the integral with each $k,l$ over $\vq_1\in \Omega^*$ is a product of two functions 
that have algebraic singularity at $\vq_1= \bm{0}$ and $\vq_2 = \vq_1 + (\vk_a - \vk_i) = \bm{0}$ and thus belongs to type 3 in \cref{tab:class_integral}.
Since the ERI with $k=i, l=j$ is nonsmooth at $\vq_1 = \bm{0}$ with order $-2$, the overall quadrature error in the calculation of 
this term using $\mathcal{K}$ is of scale $\Or(N_\vk^{-\frac13})$ according to the estimate. 

\paragraph*{Example 2:} 
The correlation energy exchange term using an exact amplitude $t$ is of form 
\[
        \int_{\Omega^*\times\Omega^*\times\Omega^*}\ud\vk_i\ud\vk_j\ud\vk_a
        \sum_{ijab}
        \braket{i\vk_i, j\vk_j | b\vk_b, a\vk_a} t_{ijab}(\vk_i, \vk_j, \vk_a). 
\]
The ERI and the amplitude have momentum transfers $\vq_1 = \vk_b - \vk_i$ and $\vq_2 = \vk_a - \vk_i$, respectively.
By the change of variables $\vk_j \rightarrow \vk_a + \vq_1$ and $\vk_i \rightarrow \vk_a - \vq_2$, the integrand is defined over $\vk_a, \vq_1, \vq_2\in\Omega^*$ 
and is smooth with respect to $\vk_a$.
As a result, the partial integral over $\vk_a$ belongs to type 1 and for each fixed $\vk_a$ the partial integral over $\vq_1, \vq_2$ belongs to type 4. 
Since the included ERIs and amplitudes all have algebraic singularities of order $0$, the quadrature error in calculating this term 
scales as $\Or(N_\vk^{-1})$. 

\paragraph*{Example 3:} The 4h2p quadratic term in the amplitude calculation is of form 
\begin{equation*}
        \int_{\Omega^*\times \Omega^*}\ud\vk_k\ud\vk_c
                \sum_{klcd} \braket{k\vk_k, l\vk_l | c\vk_c, d\vk_d}
                t_{ijcd}(\vk_i, \vk_j, \vk_c)
                t_{klab}(\vk_k, \vk_l, \vk_a),
\end{equation*}
where $\vk_l = \vk_i + \vk_j - \vk_k$ and $\vk_d = \vk_i + \vk_j - \vk_c$ by crystal momentum conservation. 
The three momentum transfers are $\vq_1 = \vk_c - \vk_k$, $\vq_2 = \vk_c - \vk_i$, and $\vq_3 = \vk_a - \vk_k$. 
By the change of variables $\vk_c \rightarrow \vk_i + \vq_2$ and $\vk_k \rightarrow \vk_i + \vq_2 - \vq_1$, we have $\vq_3 = \vq_1 - \vq_2 + (\vk_a - \vk_i)$  
and the integral over $\vq_1, \vq_2$ belongs to type 5. 
Since the included ERIs and amplitudes all have algebraic singularities of order $0$, the quadrature error in calculating this term 
scales as $\Or(N_\vk^{-1})$. 

\cref{tab:amplitude_error} summarizes the quadrature error estimates for all the CCD amplitude and energy calculations
in the proofs of \cref{lem:thm1_energy} and \cref{lem:thm1_amplitude_exact}.
Note that all entries of the exact CCD$(n)$ amplitude $t_n$ are smooth everywhere except at $\vk_a = \vk_i$ with order $0$. 
The order of singularity of an ERI $\braket{n_1\vk_1,n_2\vk_2|n_3\vk_3,n_4\vk_4}$ at $\vk_3 - \vk_1=\bm{0}$ equals to $-2$, $-1$, and $0$ respectively when  
its band indices match (i.e., $n_1=n_3, n_2 = n_4$), partially match (i.e., $n_1=n_3,n_2\neq n_4$ or $n_1\neq n_3,n_2 = n_4$), and do not match (i.e., $n_1\neq n_3, n_2\neq n_4$).
Excluding the special terms with super-algebraically decaying errors, the rule of thumb for these quadrature error estimates is that one calculation 
can have $\Or(N_\vk^{-\frac13})$, $\Or(N_\vk^{-\frac23})$, and $\Or(N_\vk^{-1})$ errors respectively when it contains ERIs 
with matching, partially matching, and no-matching band indices. 
For example, the 4h2p linear term in \cref{eqn:amplitude_4h2p_2} consists of ERI-amplitude products with varying band indices $k, l$. 
The products with (1) $k=i,l=j$, (2) $k=i,l\neq j$ or $k\neq i, l=j$, and (3) $k\neq i, l\neq j$ have respectively $\Or(N_\vk^{-\frac13})$, $\Or(N_\vk^{-\frac23})$, 
and $\Or(N_\vk^{-1})$ quadrature errors due to the ERI $\braket{k\vk_k, l\vk_l | i \vk_i, j\vk_j}$. 
The overall quadrature error in the 4h2p linear term is thus $\Or(N_\vk^{-\frac13})$, dominated by the product term with $k=i,l=j$.
In CCD amplitude calculations, ERIs with matching or partially matching band indices only appear in terms whose diagrams have interaction vertices with ladder structure,
see \cref{fig:diagram} for the four linear terms in \cref{tab:amplitude_error} with $\Or(N_\vk^{-\frac13})$ error.


\begin{table}[htbp]
        \caption{Error estimate for each individual term that appear in CCD  energy (using exact amplitudes) as well as amplitude calculations.
        All amplitude terms assume fixed $I,J,A,B$ and the integral over momentum vector and the summation over band indices are over intermediate orbitals, e.g., $K, L, C, D$. 
        Each of the 3h3p linear terms permuted by $\mathcal{P}$ in \cref{eqn:amplitude_ccd_tdl} has the same error 
        estimates as the unpermuted one and is not listed here.
        }
        \label{tab:amplitude_error}
        \begin{tabular}{lllc}
                \toprule
                & Type            & Terms                                                                                                              & Error Estimate                          \\
                \midrule
                Energy &  & $\sum_{IJAB}\braket{IJ|AB}t_{IJ}^{AB}$,  $\sum_{IJAB}\braket{IJ|BA}t_{IJ}^{AB}$ & $N_\vk^{-1}$ \\
                \midrule
        \multirow{5}{*}{Amplitude} & constant                & $\braket{AB|IJ}$                                                                                                   & 0                                       \\
        & \multirow{3}{*}{linear} & $\braket{KL|IJ}t_{KL}^{AB}$, $\braket{AB|CD}t_{IJ}^{CD}$, $\braket{AK|CI}t_{KJ}^{CB}$, $\braket{AK|CJ}t_{KI}^{BC}$ & $N_\vk^{-\frac13}$                      \\
                                & & $\braket{AK|IC}t_{KJ}^{BC}$                                                                                        & $N_\vk^{-1}$                            \\
                                & & $\braket{AK|IC}t_{KJ}^{CB}$                                                                                        & \text{Super-Algebraic} \\
        & \multirow{2}{*}{quadratic}&  $\braket{LK|DC}t_{IL}^{AD}t_{KJ}^{CB}$ &  \text{Super-Algebraic}\\
        & & all other terms& $N_\vk^{-1}$                            \\
                \bottomrule
        \end{tabular}
\end{table}

\section{Numerical Examples}
To demonstrate the finite-size errors in the energy and amplitude calculations with large $\vk$-point meshes, 
we consider a model system with an effective potential field. Using a fixed effective potential field, we can obtain orbitals and orbital energies in the TDL at any $\vk$ point, which satisfies the assumptions in \cref{thm:error_ccd}.
This simplified setup also enables us to perform large calculations using up to $N_{\vk}=16^3=4096$ $\vk$ points, which can be interpreted as a supercell with $4096$ atoms in total.

Let the unit cell be $\Omega = [0,1]^3$, we use $16$ planewave basis functions along each direction to discretize operators and functions in this unit cell (i.e., the number of $\vG$ points is $16^3=4096$ and this is independent of $N_{\vk}$). 
At each momentum vector $\vk\in \Omega^*$, we solve the effective Kohn-Sham equation 
to obtain $n_\text{occ} = 1$ occupied orbital and $n_\text{vir} = 1$ virtual orbital where the Gaussian effective potential is defined as 
\[
        V(\vrr) = \sum_{\vR\in\mathbb{L}} C\exp
        \left(
                -\frac12 (\vrr + \vR - \vrr_0)^\top \Sigma^{-1}(\vrr + \vR - \vrr_0)
        \right),
\]
with $\vrr_0 = (0.5, 0.5, 0.5)$, $\Sigma = \diag(0.1^2, 0.2^2, 0.3^2)$, and $C = -200$. 
This model problem has a direct gap of size around 30.4 between the occupied and virtual bands.

\Cref{fig:example} plots six calculations that are representative of the error estimates summarized in \cref{tab:amplitude_error} using 
the exact CCD(1) amplitude. 
To identify the asymptotic error scaling without reference values, in each plot, we use the three data points at small $N_\vk$ to construct the power-law 
extrapolations (i.e., the curve fitting in the form $C_0 + C_1 N_\vk^{-s}$) and the discrepancy between the extrapolation and the actual values at larger $N_\vk$ 
can then be used to measure the fitting quality. 
As can be observed, the convergence rates of the tested energy and amplitude calculations are consistent with the theoretical estimates in 
\cref{tab:amplitude_error}. 
These results thus justify the finite-size error estimates in \cref{lem:thm1_energy} and \cref{lem:thm1_amplitude_exact}, 
and the series of general quadrature error estimates for periodic functions with algebraic singularity obtained 
in \cref{lem:quaderror0,lem:quaderror1,lem:quaderror2,lem:quaderror3,lem:quaderror4}.
These numerical evidences demonstrate that the estimate of the finite-size  error in \cref{thm:error_ccd} is sharp.

\begin{figure}[htbp]
        \subfloat[$\sum_{IJAB}\braket{IJ|AB}t_{IJ}^{AB}$\label{fig:example1}]{
                \includegraphics[width=0.32\textwidth]{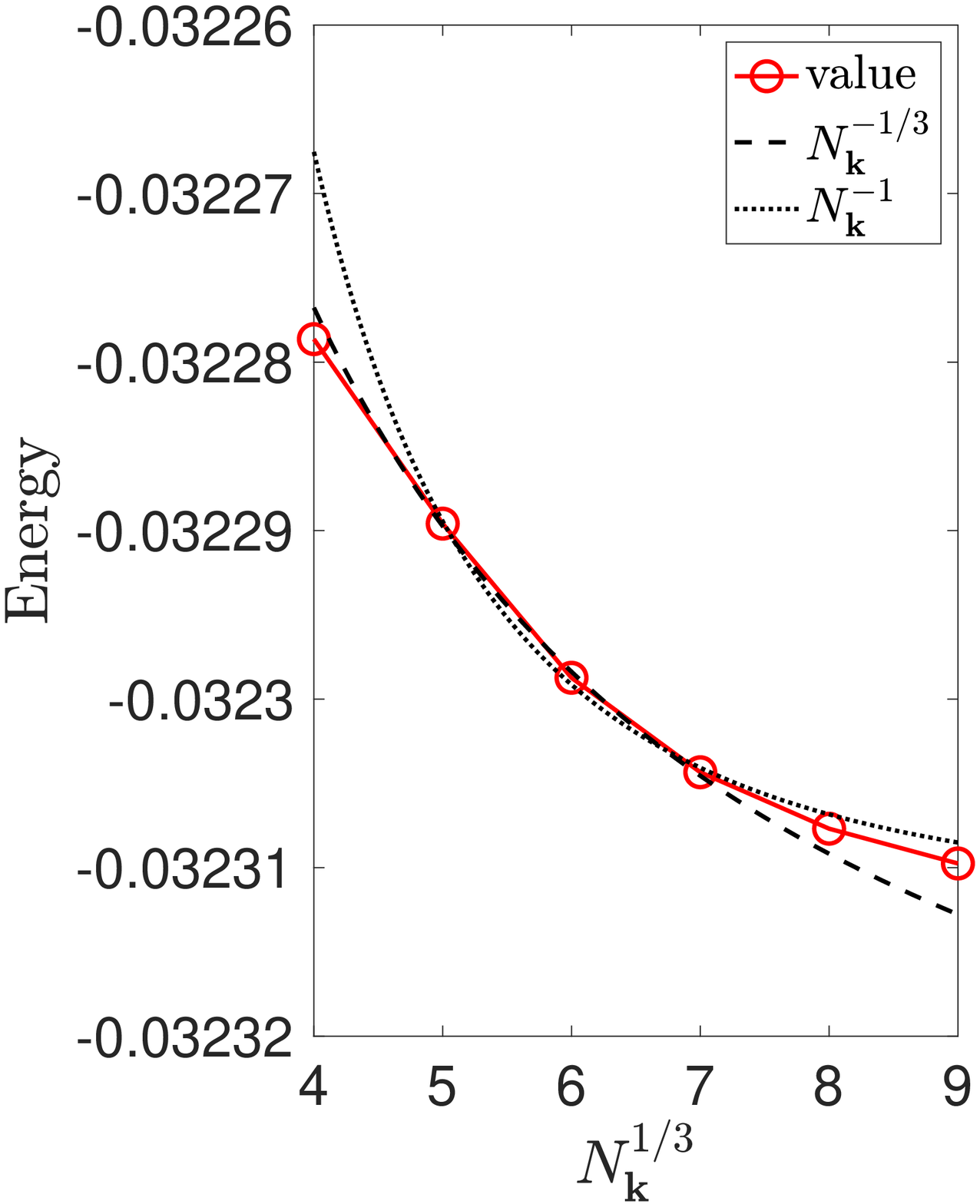}
        }
        \subfloat[$\sum_{KL}\braket{KL|IJ}t_{KL}^{AB}$\label{fig:example2}]{
                \includegraphics[width=0.32\textwidth]{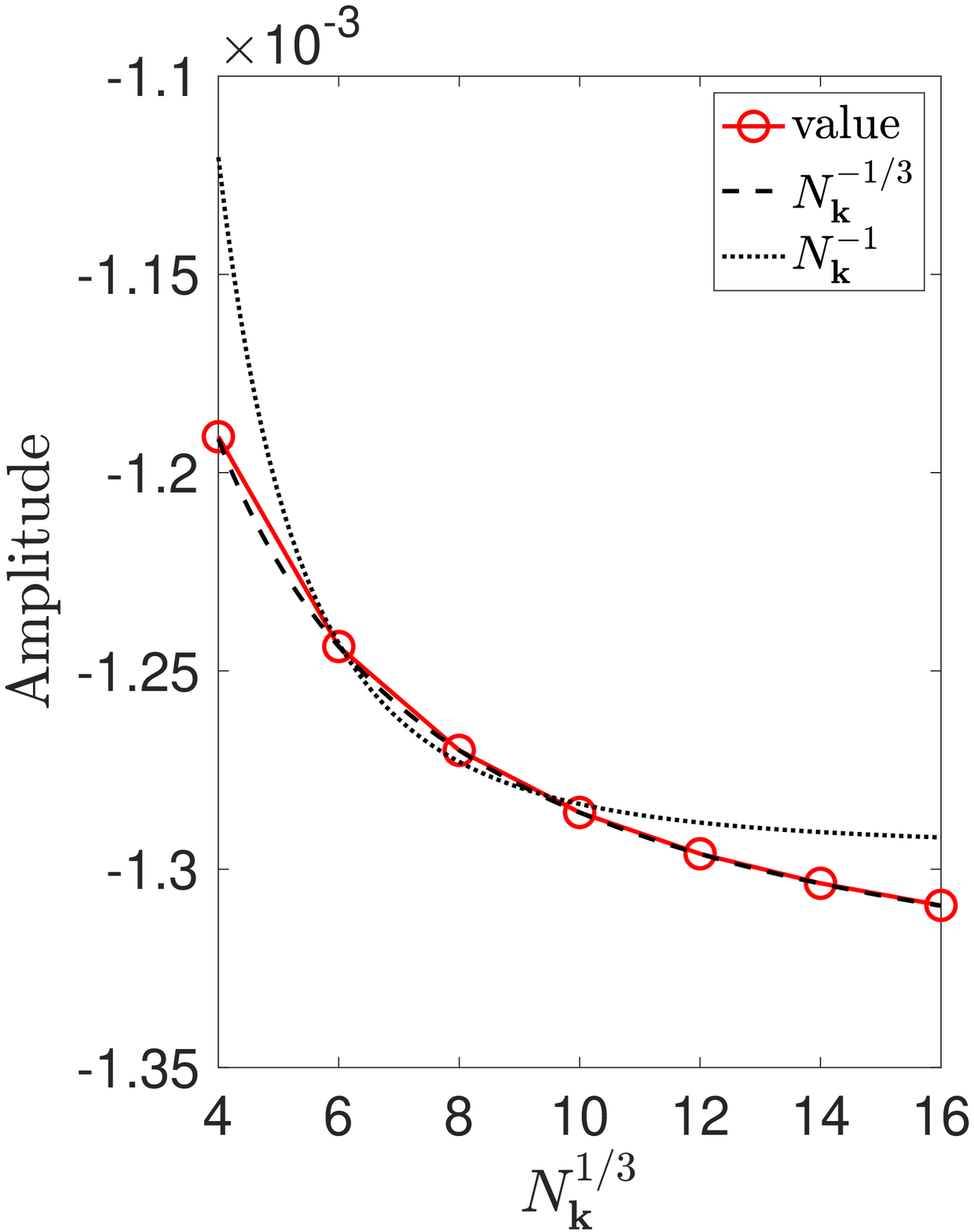}
        }
        \subfloat[$\sum_{KC}\braket{AK|IC}t_{KJ}^{BC}$\label{fig:example3}]{
                \includegraphics[width=0.32\textwidth]{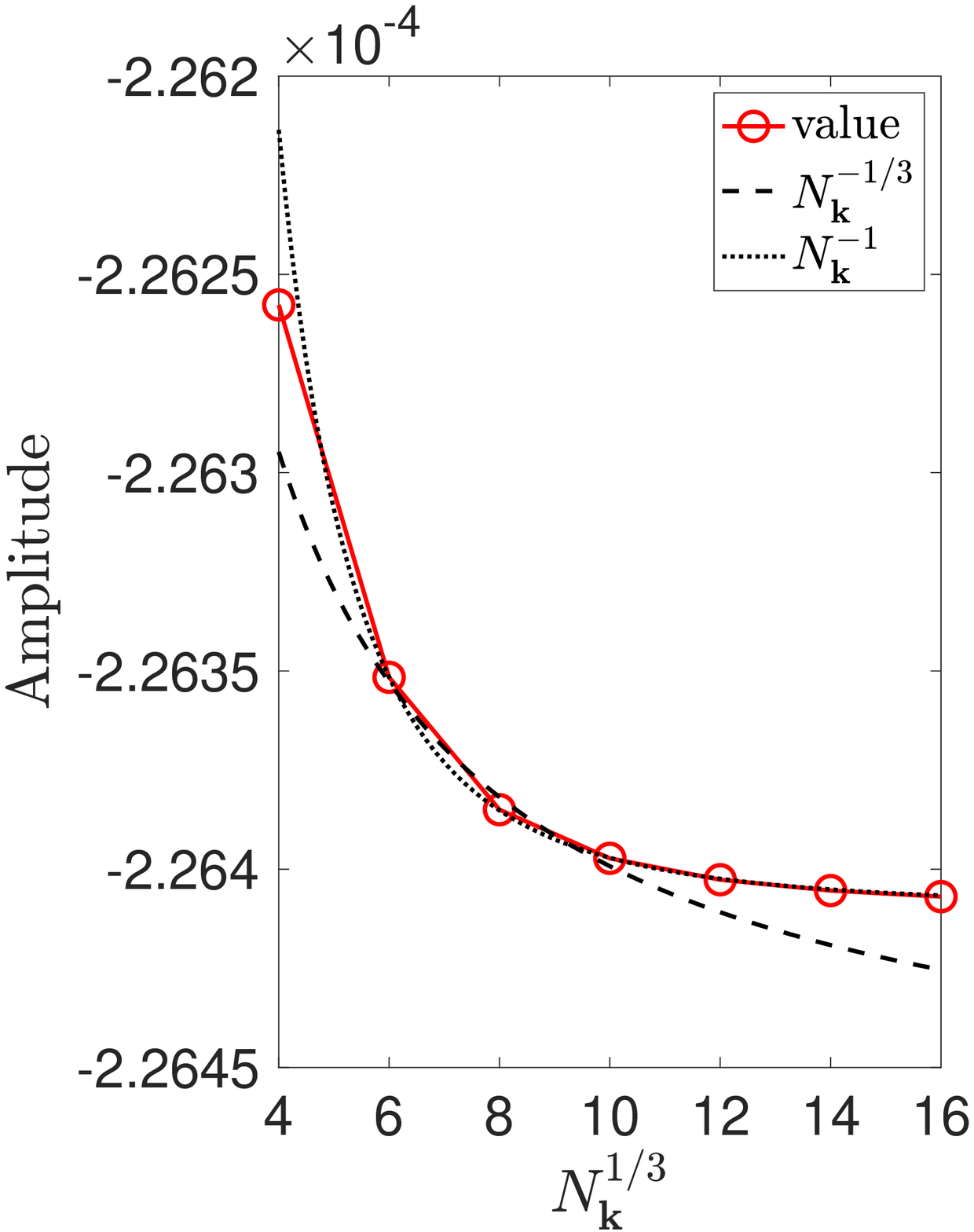}
        }

        \subfloat[$\sum_{KC}\braket{AK|IC}t_{KJ}^{CB}$\label{fig:example4}]{
                \includegraphics[width=0.32\textwidth]{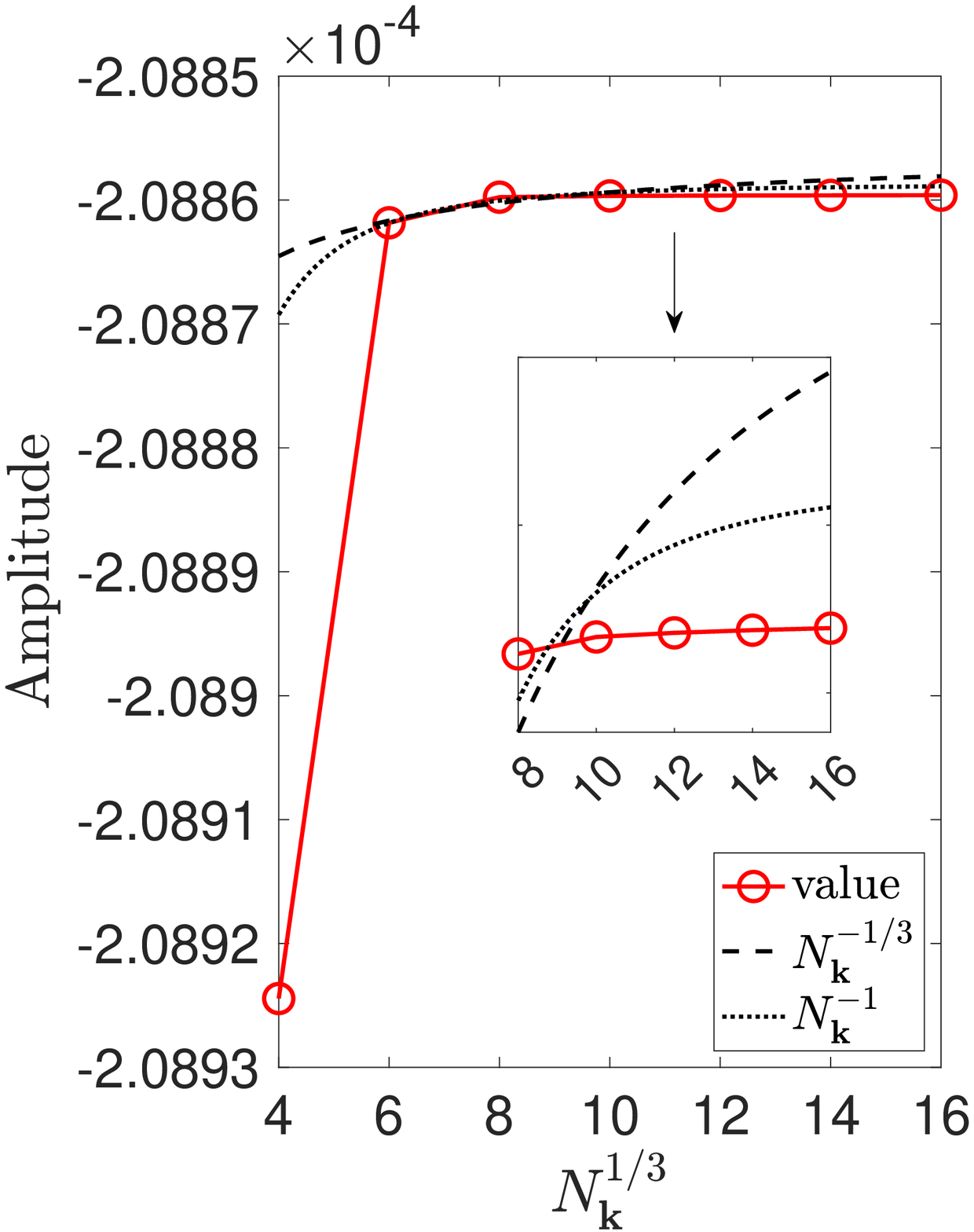}
        }
        \subfloat[$\sum_{KLCD}\braket{LK|DC}t_{IL}^{AD}t_{KJ}^{CB}$\label{fig:example5}]{
                \includegraphics[width=0.32\textwidth]{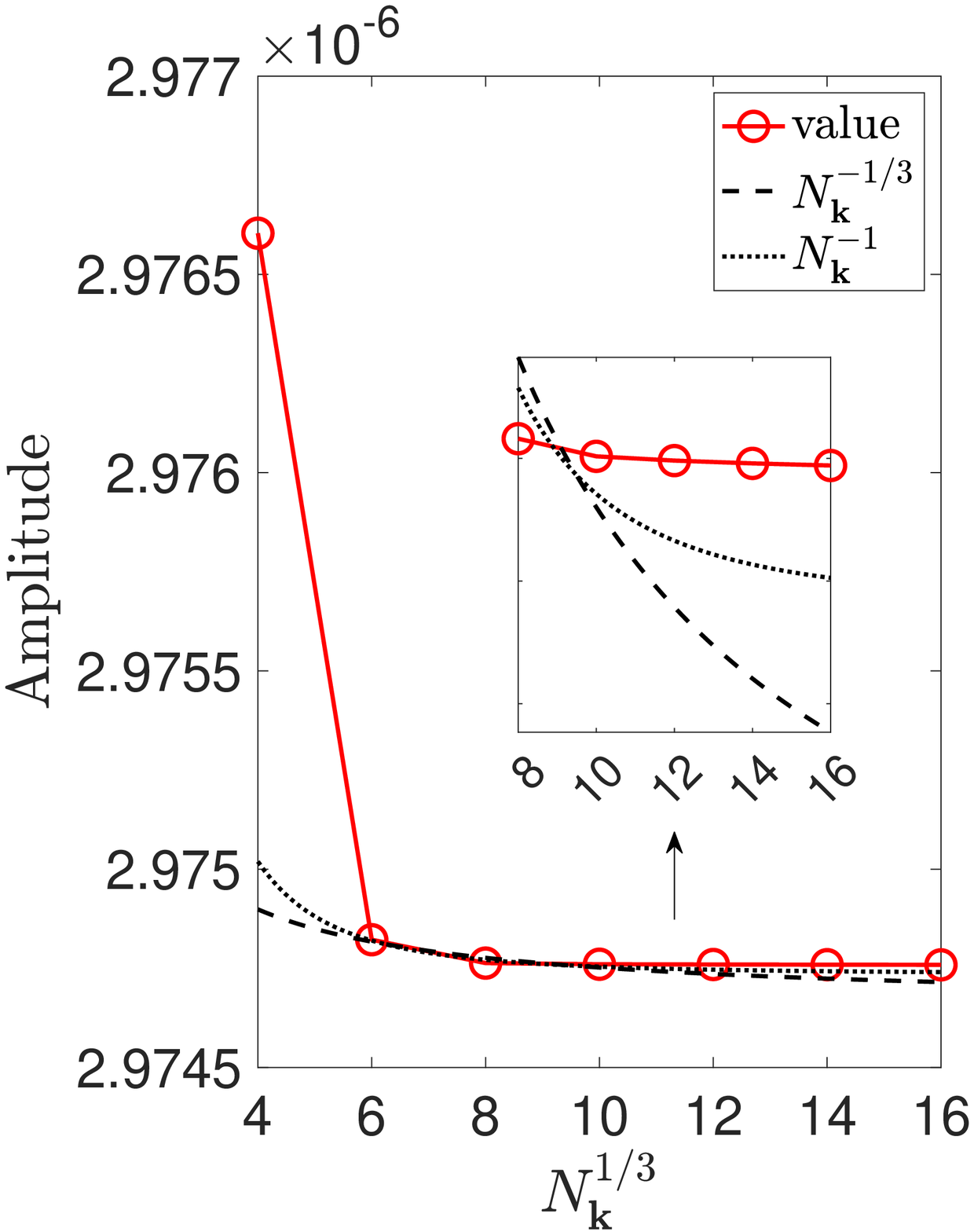}
        }
        \subfloat[$\sum_{KLCD}\braket{KL|CD}t_{IJ}^{CD}t_{KL}^{AB}$\label{fig:example6}]{
                \includegraphics[width=0.32\textwidth]{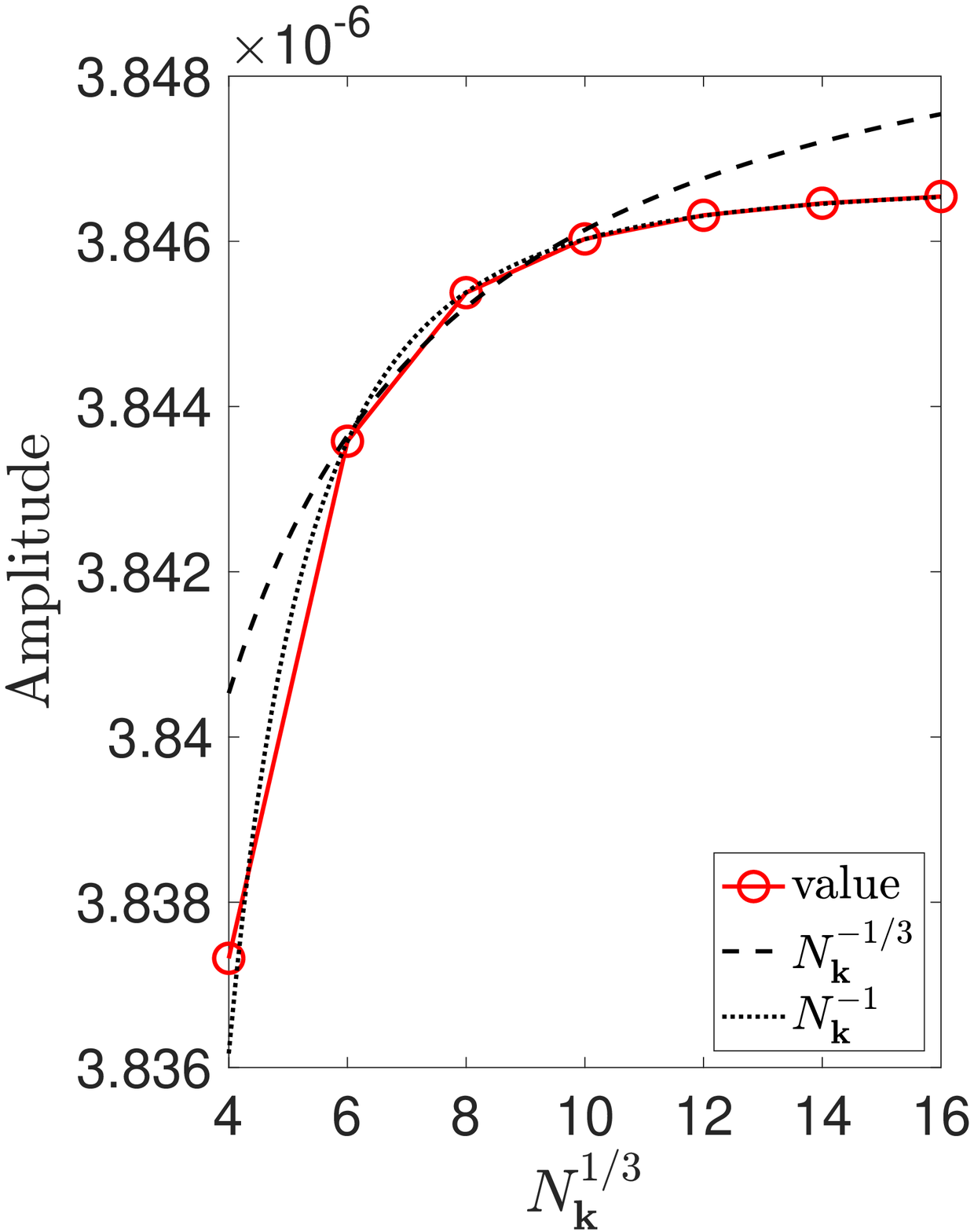}
        }
        \caption{Energy and amplitude calculations using exact CCD$(1)$ amplitude.
        All the amplitudes are evaluated at $\vk_i=\vk_j= (0,0,0),\vk_a=(0,0,\pi)$ and $(i,j,a,b) = (1,1,2,2)$. 
        The power-law extrapolations use the three data points at $N_\vk = 5^3,6^3,7^3$ in (a) and $N_\vk = 6^3, 8^3, 10^3$ in the remaining subplots.
        These calculations are estimated theoretically in \cref{tab:amplitude_error} to have quadrature errors decay asymptotically in the rate of $N_\vk^{-1}$, $N_\vk^{-\frac13}$, $N_\vk^{-1}$, super-algebraically, 
        super-algebraically, and $N_\vk^{-1}$, respectively. 
        In subplots (d) and (e), the actual data curve converges faster than the power-law extrapolations using $N_\vk^{-1}$ 
        and $N_\vk^{-\frac13}$. This can be seen as evidence that the quadrature errors decay super-algebraically. 
        \label{fig:example}
        }
\end{figure}

\section{Discussion}\label{sec:discuss}

We have investigated the convergence rate of the periodic coupled cluster theory calculations towards the thermodynamic limit. 
The analysis in this paper focuses on the simplest and representative CC theory, i.e., the coupled cluster doubles (CCD) theory. 
Since CCD consists of many finite order perturbation energy terms from M{\o}ller-Plesset perturbation theory, this  
also provides the first finite-size error analysis of these included perturbation energy terms, e.g., MP3. 
We interpret the finite-size error as numerical quadrature error.
The key steps include: (1) analyze the singularity structure, i.e., the \textit{algebraic singularity} of the integrand; 
(2) bound the quadrature error of the (univariate or multivariate) trapezoidal rule of singular integrands with certain algebraic singularity. 
Our quadrature analysis based on the Poisson summation formula for certain ``punctured'' trapezoidal rules and may be of independent interest in other contexts. 

Our main result in \cref{thm:error_ccd} studied the finite-size error in CCD$(n)$ calculation with any $n > 0$ number of fixed point iterations 
over the amplitude equation. 
However, for gapless and small-gap systems, it has been observed in practice that this fixed point iteration might not converge or the amplitude equation 
might have multiple solutions. 
In the case of divergence, the perturbative interpretation of CCD is not valid any more and our analysis over CCD$(n)$ can not be exploited to study the 
finite size error in the CCD energy calculation. 
In the case of multiple solutions, CCD itself is not well defined and nor is the problem of its finite-size error analysis. 

Our finite-size error analysis is not applicable to theories that do not directly rely on Hartree-Fock orbitals and orbital energies, such as tensor network methods or quantum Monte Carlo methods. The precise analysis of these methods may require a more detailed understanding of the behavior of structure factors~\cite{FraserFoulkesRajagopalEtAl1996,HolzmannClayMoralesEtAl2016}. For gapless systems (e.g., metals), additional singularities are introduced by the orbital energy differences of the form $(\varepsilon_{i\vk_i,j\vk_j}^{a\vk_a,b\vk_b})^{-1}$ and the occupation number near the Fermi surface. Quadrature error analysis in this case needs to take into account of these additional singularity structures, and the finite-size scaling in metals can also be qualitatively different from that in gapped systems~\cite{HolzmannBernuCeperley2011,GruberLiaoTsatsoulisEtAl2018,MihmYangShepherd2021}.

Our statement that the finite-size error in CCD energy calculation scales as $\Or(N_{\vk}^{-\frac13})$ may seem pessimistic compared to numerical results in 
the literature~\cite{LiaoGrueneis2016,GruberLiaoTsatsoulisEtAl2018}, which find the finite-size error of certain CC calculations can scale as $\Or(N_{\vk}^{-1})$ when the nonlinear amplitude equation is solved self-consistently. 
Our analysis is sharp for any constant number of iteration steps in the CCD$(n)$ scheme, under the assumption of the Hartree-Fock orbitals and orbital energies can be evaluated exactly at any given $\vk$ point. 
The orbital energies are needed in setting up the CC iterations (\cref{eqn:amplitude_ccd}), and the correction of finite-size errors in the occupied orbital energies are important for the accurate evaluation of the Fock exchange energy. 
However, since the abstract form of the CC amplitude equation (\cref{eqn:CC_amplitude_abstract}) can be statement without explicitly referring to  orbital energies, there may be a fortuitous error cancellation when the iteration scheme reaches self-consistency. 
Specifically, if CCD calculation uses inexact Hartree-Fock orbital energies without any correction, the special structure of CCD amplitude equations implies 
that this simple scheme can be equivalent to a more complex one, which simultaneously applies the Madelung constant correction to the Hartree-Fock orbital energies \cite{xing2021unified}, 
and the shifted Ewald kernel~\cite{FraserFoulkesRajagopalEtAl1996} correction to the ERIs. 
In other words, the finite-size error correction to the orbital energy alone may be detrimental in CCD theories.
The analysis of this more complex method is beyond the scope of this work. 
A viable path may be combining the quadrature error analysis with the singularity subtraction~method \cite{GygiBaldereschi1986,WenzienCappelliniBechstedt1995,xing2021unified} 
for simultaneous correction of the orbital energies and ERIs. 

While we have focused on the finite-size error of the ground state
energy, we think our quadrature based analysis includes some of the essential
ingredients in analyzing the finite-size errors for a wide range of
diagrammatic methods in quantum physics and quantum chemistry, such as
$n$-th order M{\o}ller-Plesset perturbation theory (MPn), GW, CCSD,
CCSD(T), and equation of motion coupled cluster (EOM-CC) theories.

\vspace{1em}
\noindent\textbf{Acknowledgement:}

This material is based upon work supported by the U.S. Department of Energy,
Office of Science, Office of Advanced Scientific Computing Research and Office
of Basic Energy Sciences, Scientific Discovery through Advanced Computing
(SciDAC) program (X.X.).  This work is also partially supported by the Air
Force Office of Scientific Research under award number FA9550-18-1-0095 (L.L.).
L.L. is a Simons Investigator.  We thank Timothy Berkelbach, Garnet Chan, and Alexander Sokolov for
insightful discussions.

%
%
%
%
%
%
%
%
%
%

\newpage
\bibliographystyle{abbrv}
\bibliography{mp2}

\begin{thebibliography}{10}

\bibitem{BartlettMusial2007}
R.~J. Bartlett and M.~Musia{\l}.
\newblock Coupled-cluster theory in quantum chemistry.
\newblock {\em Rev. Mod. Phys.}, 79(1):291, 2007.

\bibitem{BrouderPanatiCalandraEtAl2007}
C.~Brouder, G.~Panati, M.~Calandra, C.~Mourougane, and N.~Marzari.
\newblock Exponential localization of {Wannier} functions in insulators.
\newblock {\em Phys. Rev. Lett.}, 98:046402, 2007.

\bibitem{ChiesaCeperleyMartinEtAl2006}
S.~Chiesa, D.~M. Ceperley, R.~M. Martin, and M.~Holzmann.
\newblock {Finite-size error in many-body simulations with long-range
  interactions}.
\newblock {\em Phys. Rev. Lett.}, 97(7):6--9, 2006.

\bibitem{FraserFoulkesRajagopalEtAl1996}
L.~M. Fraser, W.~M.~C. Foulkes, G.~Rajagopal, R.~Needs, S.~Kenny, and
  A.~Williamson.
\newblock {Finite-size effects and Coulomb interactions in quantum Monte Carlo
  calculations for homogeneous systems with periodic boundary conditions}.
\newblock {\em Phys. Rev. B}, 53(4):1814--1832, 1996.

\bibitem{GruberLiaoTsatsoulisEtAl2018}
T.~Gruber, K.~Liao, T.~Tsatsoulis, F.~Hummel, and A.~Gr{\"{u}}neis.
\newblock {Applying the coupled-cluster ansatz to solids and surfaces in the
  thermodynamic limit}.
\newblock {\em Phys. Rev. X}, 8(2):021043, 2018.

\bibitem{GygiBaldereschi1986}
F.~Gygi and A.~Baldereschi.
\newblock {Self-consistent Hartree-Fock and screened-exchange calculations in
  solids: Application to silicon}.
\newblock {\em Phys. Rev. B}, 34:4405--4408, 1986.

\bibitem{HirataPodeszwaTobitaEtAl2004}
S.~Hirata, R.~Podeszwa, M.~Tobita, and R.~J. Bartlett.
\newblock {Coupled-cluster singles and doubles for extended systems}.
\newblock {\em J. Chem. Phys.}, 120(6):2581--2592, 2004.

\bibitem{HolzmannBernuCeperley2011}
M.~Holzmann, B.~Bernu, and D.~M. Ceperley.
\newblock {Finite-size analysis of the Fermi liquid properties of the
  homogeneous electron gas}.
\newblock {\em J. Phys. Conf. Ser.}, 321(1):012020, 2011.

\bibitem{HolzmannClayMoralesEtAl2016}
M.~Holzmann, R.~C. Clay, M.~A. Morales, N.~M. Tubman, D.~M. Ceperley, and
  C.~Pierleoni.
\newblock {Theory of finite size effects for electronic quantum Monte Carlo
  calculations of liquids and solids}.
\newblock {\em Phys. Rev. B}, 94(3):1--16, 2016.

\bibitem{Izzo2022}
F.~Izzo, O.~Runborg, and R.~Tsai.
\newblock High order corrected trapezoidal rules for a class of singular
  integrals.
\newblock {\em arXiv preprint arXiv:2203.04854}, 2022.

\bibitem{LiaoGrueneis2016}
K.~Liao and A.~Gr{\"{u}}neis.
\newblock {Communication: Finite size correction in periodic coupled cluster
  theory calculations of solids}.
\newblock {\em J. Chem. Phys.}, 145(14):141102, 2016.

\bibitem{McClainSunChanEtAl2017}
J.~McClain, Q.~Sun, G.~K.~L. Chan, and T.~C. Berkelbach.
\newblock {Gaussian-based coupled-cluster theory for the ground-state and band
  structure of solids}.
\newblock {\em J. Chem. Theory Comput.}, 13(3):1209--1218, 2017.

\bibitem{MihmMcIsaacShepherd2019}
T.~N. Mihm, A.~R. McIsaac, and J.~J. Shepherd.
\newblock An optimized twist angle to find the twist-averaged correlation
  energy applied to the uniform electron gas.
\newblock {\em J. Chem. Phys.}, 150(19):191101, 2019.

\bibitem{MihmYangShepherd2021}
T.~N. Mihm, B.~Yang, and J.~J. Shepherd.
\newblock Power laws used to extrapolate the coupled cluster correlation energy
  to the thermodynamic limit.
\newblock {\em J. Chem. Theory Comput.}, 17(5):2752--2758, 2021.

\bibitem{MonacoPanatiPisanteEtAl2018}
D.~Monaco, G.~Panati, A.~Pisante, and S.~Teufel.
\newblock Optimal decay of wannier functions in chern and quantum hall
  insulators.
\newblock {\em Commun. Math. Phys.}, 359(1):61--100, 2018.

\bibitem{MonkhorstPack1976}
H.~J. Monkhorst and J.~D. Pack.
\newblock Special points for {B}rillouin-zone integrations.
\newblock {\em Phys. Rev. B}, 13(12):5188, 1976.

\bibitem{Schneider2009}
R.~Schneider.
\newblock Analysis of the projected coupled cluster method in electronic
  structure calculation.
\newblock {\em Numer. Math.}, 113(3):433--471, 2009.

\bibitem{ShavittBartlett2009}
I.~Shavitt and R.~J. Bartlett.
\newblock {\em {Many-body methods in chemistry and physics: MBPT and
  coupled-cluster theory}}.
\newblock Cambridge Univ. Pr., 2009.

\bibitem{Trefethen2019Approximation}
L.~N. Trefethen.
\newblock {\em Approximation theory and approximation practice}, volume 164.
\newblock SIAM, 2019.

\bibitem{WenzienCappelliniBechstedt1995}
B.~Wenzien, G.~Cappellini, and F.~Bechstedt.
\newblock Efficient quasiparticle band-structure calculations for cubic and
  noncubic crystals.
\newblock {\em Phys. Rev. B}, 51(20):14701, 1995.

\bibitem{Wilson1990}
K.~G. Wilson.
\newblock Ab initio quantum chemistry: A source of ideas for lattice gauge
  theorists.
\newblock {\em Nucl. Phs. B, Proc. Suppl.}, 17:82--92, 1990.

\bibitem{XingLiLin2021}
X.~Xing, X.~Li, and L.~Lin.
\newblock Staggered mesh method for correlation energy calculations of solids:
  Second-order m{\o}ller--plesset perturbation theory.
\newblock {\em J. Chem. Theory Comput.}, 17(8):4733--4745, 2021.

\bibitem{xing2021unified}
X.~Xing, X.~Li, and L.~Lin.
\newblock {Unified analysis of finite-size error for periodic Hartree-Fock and
  second order M{\o}ller-Plesset perturbation theory}.
\newblock {\em arXiv preprint arXiv:2108.00206}, 2021.

\bibitem{XingLin2022}
X.~Xing and L.~Lin.
\newblock Staggered mesh method for correlation energy calculations of solids:
  Random phase approximation in direct ring coupled cluster doubles and
  adiabatic connection formalisms.
\newblock {\em J. Chem. Theory Comput.}, 18(2):763--775, 2022.

\bibitem{ZhangGruneis2019}
I.~Y. Zhang and A.~Gr{\"u}neis.
\newblock Coupled cluster theory in materials science.
\newblock {\em Front. Mater.}, 6:123, 2019.

\end{thebibliography}

\newpage
\begin{appendices}
\crefalias{section}{appendix}

\section{Brief introduction of MP3 and CCD}\label{appendix:mp3ccd}

The double amplitude $T_{ijab}^{\#, N_\vk}(\vk_i, \vk_j, \vk_a)$ is commonly denoted in the literature as $t_{i\vk_i,j\vk_j}^{a\vk_a, b\vk_b}$
which assumes implicitly the crystal momentum conservation, 
\[
\vk_i + \vk_j - \vk_a - \vk_b \in \mathbb{L}^*,
\]
and a set of $\vk_i, \vk_j, \vk_a$ can determine a unique $\vk_b\in\Omega^*$ accordingly. 
This explains our notation of the double amplitude as a function of $\vk_i, \vk_j, \vk_a$ with discrete indices $i,j,a,b$. 
For brevity, we use a capital letter $P$ to denote an orbital index $(p, \vk_p)$, and use $P\in \{I, J, K, L\}$ to refer to occupied orbitals 
(a.k.a., holes) and $P\in\{A,B,C,D\}$ to refer to unoccupied orbitals (a.k.a., particles). 
Any summation $\sum_P$ refers to summing over all occupied or virtual band indices $p$ and all momentum vectors $\vk_p\in \mathcal{K}$ while 
the crystal momentum conservation is enforced according to the summand. 


\subsection{Amplitude in the finite case}
With a finite $\vk$-point mesh $\mathcal{K}$ of size $N_\vk$, the normalized MP3 amplitude $T_{ijab}^{\text{MP3}, N_\vk}(\vk_i, \vk_j, \vk_a)  = t_{IJ}^{AB}$ 
with $\vk_i, \vk_j, \vk_a \in \mathcal{K}$  is defined as
\begin{align}
        t_{IJ}^{AB}
        & 
        = \frac{1}{\varepsilon_{IJ}^{AB}} \braket{AB|IJ}+ \dfrac{1}{\varepsilon_{IJ}^{AB}}
        \Bigg[\dfrac{1}{N_\vk}\sum_{KL}\braket{KL|IJ} s_{KL}^{AB} + \dfrac{1}{N_\vk}\sum_{CD}\braket{AB|CD} s_{IJ}^{CD} 
        \nonumber
        \\
        & 
        + \mathcal{P} 
        \left(
        \dfrac{1}{N_\vk}\sum_{KC} 
        (2\braket{AK|IC} - \braket{AK|CI}) s_{KJ}^{CB} - \braket{AK|IC}s_{KJ}^{BC}- \braket{AK|CJ}s_{KI}^{BC}
        \right)
        \Bigg],
        \label{eqn:amplitude_mp3}
\end{align}
where $\varepsilon_{IJ}^{AB} = \varepsilon_I + \varepsilon_J - \varepsilon_A - \varepsilon_B$, $
s_{IJ}^{AB} = \braket{AB|IJ} / \varepsilon_{IJ}^{AB}$ is the normalized MP2 double amplitude,
and $\mathcal{P}$ is a permutation operator defined as
$
\mathcal{P}(\cdots)_{IJ}^{AB} = (\cdots)_{IJ}^{AB} + (\cdots)_{JI}^{BA}.
$
The three included summations are referred to as the 4-hole-2-particle (4h2p), 2-hole-4-particle (2h4p), and 3-hole-3-particle (3h3p) terms in MP3 
according to the number of dummy occupied and virtual orbitals involved. 
We note that the summation over each $P$ implicitly enforces the   crystal momentum conservation.
For example, the MP3-4h2p amplitude is explicitly written as
\[
\dfrac{1}{\varepsilon_{IJ}^{AB}}
\dfrac{1}{N_\vk}
\sum_{KL}\braket{KL|IJ}s_{KL}^{AB}
= 
\dfrac{1}{\varepsilon_{i\vk_i,j\vk_j}^{a\vk_a,b\vk_b}}
\dfrac{1}{N_\vk}\sum_{\vk_k\in\mathcal{K}}\sum_{kl}  \braket{k\vk_k, l\vk_l |  i\vk_i, j\vk_j} s_{k\vk_k, l\vk_l}^{a\vk_a, b\vk_b},
\]
where $\vk_l \in \mathcal{K}$ is uniquely determined by $\vk_i, \vk_j, \vk_k$.  

In CCD theory with a finite mesh $\mathcal{K}$, the wavefunction is represented in an exponential ansatz as 
\[
\ket{\Psi} = e^{\mathcal{T}}  \ket{\Phi} := \exp\left(\dfrac{1}{N_\vk}\sum_{IJAB}t_{IJ}^{AB} a_{A}^\dagger a_{B}^\dagger a_J a_I\right) \ket{\Phi},
\] 
where $a_P^\dagger$ and $a_P$ are creation and annihilation operators, $\ket{\Phi}$ is the reference Hartree-Fock determinant, 
and $t_{IJ}^{AB} = T_{ijab}^{\text{CCD}, N_\vk}(\vk_i, \vk_j, \vk_a)$ is the \textit{normalized} CCD double amplitude.
The double amplitude satisfies the amplitude equation (which is derived from the Galerkin projection) as
\begin{equation}
\braket{\Phi_{IJ}^{AB}, e^{-\mathcal{T}} \mathcal{H}_\mathcal{K} e^{\mathcal{T}} \Phi}  = 0, \qquad \forall I, J, A, B,       
\label{eqn:CC_amplitude_abstract}
\end{equation}
where $\ket{\Phi_{IJ}^{AB}} = a_{A}^\dagger a_{B}^\dagger a_J a_I \ket{\Phi}$ is an excited single Slater determinant and $\mathcal{H}_\mathcal{K}$ is the model 
Hamiltonian with $\vk$-point mesh $\mathcal{K}$. 
In practice, this nonlinear amplitude equation can be solved using a quasi-Newton method \cite{Schneider2009}, which can be equivalently written in the form of a fixed point iteration as
\begin{align}
        t_{IJ}^{AB}
        & = 
        \frac{1}{\varepsilon_{IJ}^{AB}} \braket{AB|IJ} + 
        \frac{1}{\varepsilon_{IJ}^{AB}} \mathcal{P}
        \left(
        \sum_C\kappa_C^A t_{IJ}^{CB}
        -\sum_K\kappa_I^K t_{KJ}^{AB} 
        \right)
        + 
        \frac{1}{\varepsilon_{IJ}^{AB}} 
        \Bigg[
        \dfrac{1}{N_\vk}\sum_{KL}\chi_{IJ}^{KL} t_{KL}^{AB} 
        \nonumber       
        \\
        & 
        +\dfrac{1}{N_\vk}\sum_{CD}\chi_{CD}^{AB} t_{IJ}^{CD} 
        + \mathcal{P} 
        \left(\dfrac{1}{N_\vk}\sum_{KC} (2\chi_{IC}^{AK} - \chi_{CI}^{AK}) t_{KJ}^{CB} - \chi_{IC}^{AK}t_{KJ}^{BC} - \chi_{CJ}^{AK}t_{KI}^{BC}
        \right)
        \Bigg].
        \label{eqn:amplitude_ccd}
\end{align}
This reformulation of CCD amplitude equation can also be derived from the CCSD amplitude equation in \cite{HirataPodeszwaTobitaEtAl2004} by removing all the terms related to single amplitudes and normalizing  
the involved ERIs and amplitudes (which gives the extra $1/N_\vk$ factor in the equation and the intermediate blocks). 
The intermediate blocks in the equation are defined as
\begin{align*}
        \kappa_C^A & = -\dfrac{1}{N_\vk^2}\sum_{KLD} \left( 2\braket{KL|CD} - \braket{KL|DC} \right) t_{KL}^{AD},
        \\
        \kappa_I^K & = \dfrac{1}{N_\vk^2}\sum_{LCD} \left( 2\braket{KL|CD} - \braket{KL|DC} \right) t_{IL}^{CD},
        \\
        \chi_{IJ}^{KL} & = \braket{KL|IJ} + \dfrac{1}{N_\vk}\sum_{CD} \braket{KL|CD} t_{IJ}^{CD},
        \\
        \chi_{CD}^{AB} & = \braket{AB|CD},
        \\
        \chi_{IC}^{AK} & = \braket{AK|IC} + \dfrac{1}{2N_\vk}\sum_{LD} \left(2\braket{LK|DC} - \braket{LK|CD}\right) t_{IL}^{AD} - \braket{LK|DC} t_{IL}^{DA},
        \\
        \chi_{CI}^{AK} &= \braket{AK|CI} - \dfrac{1}{2N_\vk}\sum_{LD} \braket{LK|CD} t_{IL}^{DA},
\end{align*}
and their momentum vector indices also assume the crystal momentum conservation
\begin{align*}
        \kappa_{P}^{Q} & \rightarrow\vk_p - \vk_q \in \mathbb{L}^*,\\
        \chi_{PQ}^{RS} & \rightarrow \vk_p + \vk_q - \vk_r - \vk_s \in \mathbb{L}^*. 
\end{align*}

\subsection{Amplitude in the TDL}
In the TDL with $\mathcal{K}$ converging to $\Omega^*$, all the averaged summation $N_\vk^{-1}\sum_{\vk\in\mathcal{K}}$ converge to 
integration $|\Omega^*|^{-1}\int_{\Omega*}\ud\vk$ in MP3 and CCD. 
It is worth noting that the double amplitude is computed approximately on $\mathcal{K}\times \mathcal{K}\times\mathcal{K}$ as a tensor in the finite 
case and it converges to a function of $\vk_i, \vk_j, \vk_a$ defined in $\Omega^*\times\Omega^*\times \Omega^*$ in the TDL. 
In MP3, the exact amplitude $T^{\text{MP3}, \text{TDL}}_{ijab}(\vk_i, \vk_j, \vk_a) = t_{IJ}^{AB}$ with any $\vk_i, \vk_j, \vk_a \in \Omega^*$ 
can be formulated according to \cref{eqn:amplitude_mp3} as
\begin{align}
        & 
        t_{IJ}^{AB}
        = \frac{1}{\varepsilon_{IJ}^{AB}} \braket{AB|IJ}+ \dfrac{1}{\varepsilon_{IJ}^{AB}}
        \Bigg[\dfrac{1}{|\Omega^*|}\int_{\Omega^*}\ud\vk_k \sum_{kl}\braket{KL|IJ} s_{KL}^{AB} + \dfrac{1}{|\Omega^*|}\int_{\Omega^*}\ud\vk_c 
                \sum_{cd}\braket{AB|CD} s_{IJ}^{CD}
        \nonumber 
        \\
        &  
        \quad 
        + \mathcal{P} 
        \left(
        \dfrac{1}{|\Omega^*|}\int_{\Omega^*}\ud\vk_k \sum_{kc}
        (2\braket{AK|IC} - \braket{AK|CI}) s_{KJ}^{CB} - \braket{AK|IC}s_{KJ}^{BC}- \braket{AK|CJ}s_{KI}^{BC}
        \right)
        \Bigg].
        \label{eqn:amplitude_mp3_tdl}
\end{align}

Similarly in CCD, the amplitude equation in the TDL for the exact amplitudes as functions of $\vk_i, \vk_j, \vk_a \in \Omega^*$ 
can be formulated according to \cref{eqn:amplitude_ccd} as 
\begin{align}
        & t_{IJ}^{AB}
         = 
        \frac{1}{\varepsilon_{IJ}^{AB}} \braket{AB|IJ} + 
        \frac{1}{\varepsilon_{IJ}^{AB}} \mathcal{P}
        \left(
        \sum_C\kappa_C^A t_{IJ}^{CB}
        -\sum_K\kappa_I^K t_{KJ}^{AB} 
        \right)
        + 
        \frac{1}{\varepsilon_{IJ}^{AB}} 
        \Bigg[
        \dfrac{1}{|\Omega^*|}\int_{\Omega^*}\ud\vk_k \sum_{kl}\chi_{IJ}^{KL} t_{KL}^{AB}
        \nonumber       \\
        & 
        \quad 
        + 
        \dfrac{1}{|\Omega^*|}\int_{\Omega^*}\ud\vk_c \sum_{cd}\chi_{CD}^{AB} t_{IJ}^{CD} 
        + \mathcal{P} 
        \left(
                        \dfrac{1}{|\Omega^*|}\int_{\Omega^*}\ud\vk_k \sum_{kc} (2\chi_{IC}^{AK} - \chi_{CI}^{AK}) t_{KJ}^{CB} - \chi_{IC}^{AK}t_{KJ}^{BC} - 
                        \chi_{CJ}^{AK}t_{KI}^{BC}
        \right)
        \Bigg],
        \label{eqn:amplitude_ccd_tdl}
\end{align}
where the intermediate blocks in the TDL are defined as 
\begin{align*}
        \kappa_C^A & = -\dfrac{1}{|\Omega^*|^2}\int_{\Omega^*\times \Omega^*}\ud\vk_k\ud\vk_l \sum_{kld} \left( 2\braket{KL|CD} - \braket{KL|DC} \right) t_{KL}^{AD},
        \\
        \kappa_I^K & = \dfrac{1}{|\Omega^*|^2}\int_{\Omega^*\times \Omega^*}\ud\vk_c\ud\vk_d  \sum_{lcd} \left( 2\braket{KL|CD} - \braket{KL|DC} \right) t_{IL}^{CD},
        \\
        \chi_{IJ}^{KL} & = \braket{KL|IJ} + \dfrac{1}{|\Omega^*|}\int_{\Omega^*}\ud\vk_c \sum_{cd} \braket{KL|CD} t_{IJ}^{CD},
        \\
        \chi_{CD}^{AB} & = \braket{AB|CD},
        \\
        \chi_{IC}^{AK} & = \braket{AK|IC} + \dfrac{1}{2|\Omega^*|}\int_{\Omega^*}\ud\vk_l \sum_{ld}  \left(2\braket{LK|DC} - \braket{LK|CD}\right) t_{IL}^{AD} - \braket{LK|DC} t_{IL}^{DA} ,
        \\
        \chi_{CI}^{AK} &= \braket{AK|CI} - \dfrac{1}{2|\Omega^*|}\int_{\Omega^*}\ud\vk_l \sum_{ld} \braket{LK|CD} t_{IL}^{DA}. 
\end{align*}

\subsection{Amplitude in CCD($n$)}

In this paper, we use CCD$(n)$ to refer to solving the CCD amplitude approximately by applying $n$ fixed point iterations with a zero initial guess to  
the amplitude equation and then using the obtained amplitude to compute an approximate CCD energy. 
In the finite case, the initial amplitude for CCD$(0)$ is set as zero and when $n=1$ we have
\begin{equation*}
T^{\text{CCD}(1), N_\vk}_{ijab}(\vk_i, \vk_j, \vk_a)
 = \frac{1}{\varepsilon_{IJ}^{AB}} \braket{AB|IJ}.
\end{equation*}
Therefore, CCD$(1)$ can be identified with MP2.
In CCD($2$) calculation, the CCD$(1)$ amplitude is plugged into the right hand side of \cref{eqn:amplitude_ccd} where the constant term plus all the linear 
terms exactly gives the MP3 amplitude in \cref{eqn:amplitude_mp3} and all the quadratic terms belong to the MP4 amplitude. 
Therefore, CCD$(2)$ contains all terms in MP2 and MP3, as well as a subset of MP4.

At the $n$th iteration, we plug the CCD$(n-1)$ amplitude from the $(n-1)$th iteration into the right hand side of the amplitude equation in
\cref{eqn:amplitude_ccd} and the left hand side gives the CCD$(n)$ amplitude, i.e., 
\begin{align*}
&T^{\text{CCD}(n), N_\vk}_{ijab}(\vk_i, \vk_j, \vk_a)
 = 
\frac{1}{\varepsilon_{IJ}^{AB}} \braket{AB|IJ} + 
\frac{1}{\varepsilon_{IJ}^{AB}} \mathcal{P}
\left(
\sum_C\kappa_C^A T^{\text{CCD}(n-1), N_\vk}_{ijcb}(\vk_i, \vk_j, \vk_c)
\right.
\\
&\hspace{4em}\left.
 -\sum_K\kappa_I^KT^{\text{CCD}(n-1), N_\vk}_{kjab}(\vk_k, \vk_j, \vk_a)
\right)
+ 
\frac{1}{\varepsilon_{IJ}^{AB}} 
\dfrac{1}{N_\vk}\sum_{KL}\chi_{IJ}^{KL} T^{\text{CCD}(n-1), N_\vk}_{klab}(\vk_k, \vk_l, \vk_a)+ 
\cdots
\end{align*}
where all the involved intermediate blocks are now computed using CCD$(n-1)$ amplitude, e.g., 
\begin{align*}
\kappa_C^A &=\dfrac{1}{N_\vk^2} \sum_{KLD} \left( 2\braket{KL|CD} - \braket{KL|DC} \right)T^{\text{CCD}(n-1), N_\vk}_{klad}(\vk_k, \vk_l, \vk_c), \\
\chi_{IJ}^{KL} &= \braket{KL|IJ} + \frac{1}{N_\vk}\sum_{CD} \braket{KL|CD} T^{\text{CCD}(n-1), N_\vk}_{ijcd}(\vk_i, \vk_j, \vk_c).
\end{align*}

In the finite case, by unfolding the fixed point iteration, the CCD$(n)$ amplitude consists of many averaged summations of products of ERIs 
and orbital energy fractions which correspond to the double amplitudes of certain perturbation terms in M{\o}ller-Plesset perturbation 
theory \cite{ShavittBartlett2009} with order up to $2^n$.
Each averaged summation in the unfolded CCD$(n)$ amplitude converges in the TDL to an integral over involved intermediate momentum vectors in $\Omega^*$. 
As a result, the CCD$(n)$ amplitude in the TDL could be explicitly formulated as a summation of many integrals which are respectively approximated by 
trapezoidal rules in the finite case. 

On the other hand, the CCD$(n)$ amplitude in the TDL can also be defined recursively by applying $n$ fixed point iterations to the amplitude equation 
in the TDL in \cref{eqn:amplitude_ccd_tdl}, i.e., 
\begin{align}
        &T^{\text{CCD}(n), \text{TDL}}_{ijab}(\vk_i, \vk_j, \vk_a)
        = 
        \frac{1}{\varepsilon_{IJ}^{AB}} \braket{AB|IJ} + 
        \frac{1}{\varepsilon_{IJ}^{AB}} \mathcal{P}
        \left(
        \sum_c\kappa_{c\vk_a}^{a\vk_a} T^{\text{CCD}(n-1), \text{TDL}}_{ijcb}(\vk_i, \vk_j, \vk_a)
        \right.
        \label{eqn:amplitude_tdl_iterative}
        \\
        &\hspace{1em}\left.
        -\sum_k\kappa_{i\vk_i}^{k\vk_i}T^{\text{CCD}(n-1), \text{TDL}}_{kjab}(\vk_i, \vk_j, \vk_a)
        \right)
        + 
        \frac{1}{\varepsilon_{IJ}^{AB}} 
        \dfrac{1}{|\Omega^*|}\int_{\Omega^*}\ud\vk_k \sum_{kl}\chi_{IJ}^{KL} T^{\text{CCD}(n-1), \text{TDL}}_{klab}(\vk_k, \vk_l, \vk_a)+ 
        \cdots
        \nonumber
\end{align}
where all the involved intermediate blocks are computed using exact CCD$(n-1)$ amplitude.
Unfolding this fixed point iteration, it can be verified that this recursive definition of the amplitude in the TDL is consistent with the above definition 
obtained  by taking the thermodynamic limit of each individual averaged summation term in the double amplitude in the finite case.

\section{Proof of Theorem~\ref{thm:error_ccd}}\label{app:theorem0}

\subsection{Proof of \cref{lem:thm1_nonsmooth}: Singularity structure of the exact CCD$(n)$ amplitude} \label{lemproof:thm1_nonsmooth}

Based on the smoothness property of ERIs in \cref{sub:fractional}, it can be verified that each CCD$(1)$ amplitude entry, i.e., the MP2 amplitude with any $(i,j,a,b)$, 
lies in $\mathbb{T}(\Omega^*)$ and thus satisfies the statement in the lemma. 
Since the exact CCD$(n)$ amplitude with $n > 1$ is defined by recursively applying $\mathcal{F}_\text{TDL}$ in \cref{eqn:fixed_point_tdl} to the CCD$(1)$ amplitude, it is sufficient to prove that 
$
        \mathcal{F}_\text{TDL}(t)\in \mathbb{T}(\Omega^*)^{n_\text{occ}\times n_\text{occ}\times n_\text{vir}\times n_\text{vir}},
        \forall t \in \mathbb{T}(\Omega^*)^{n_\text{occ}\times n_\text{occ}\times n_\text{vir}\times n_\text{vir}}.
$

Consider an arbitrary $t\in \mathbb{T}(\Omega^*)^{n_\text{occ}\times n_\text{occ}\times n_\text{vir}\times n_\text{vir}}$. 
Fixing a set of $(i,j,a,b)$, we focus on analyzing the constant, linear, and quadratic terms included in the entry $\left[\mathcal{F}_\text{TDL}(t)\right]_{ijab}$
\begin{align}
        & \left[\mathcal{F}_\text{TDL}(t)\right]_{ijab}(\vk_i, \vk_j, \vk_a)
        = 
        \dfrac{1}{\varepsilon_{i\vk_i, j\vk_j}^{a\vk_a, b\vk_b}}\braket{a\vk_a, b\vk_b | i\vk_i, j\vk_j}
        \nonumber\\
        & 
        \qquad + 
        \dfrac{1}{\varepsilon_{i\vk_i, j\vk_j}^{a\vk_a, b\vk_b}}
        \dfrac{1}{|\Omega^*|}\int_{\Omega^*}\ud\vk_k \sum_{kl} \braket{k\vk_k, l\vk_l | i\vk_i, j\vk_j} t_{klab}(\vk_k, \vk_l, \vk_a)
        \nonumber\\
        & 
        \qquad + 
        \dfrac{1}{\varepsilon_{i\vk_i, j\vk_j}^{a\vk_a, b\vk_b}}
        \dfrac{1}{|\Omega^*|^2}\int_{\Omega^*}\ud\vk_k\int_{\Omega^*}\ud\vk_c \sum_{klcd} \braket{k\vk_k, l\vk_l | c\vk_c, d\vk_d} 
        t_{ijcd}(\vk_i, \vk_j, \vk_c)t_{klab}(\vk_k, \vk_l, \vk_a)
        \nonumber \\
        & \qquad + \cdots, 
        \label{eqn:amplitude_map_expansion}
\end{align}
where the listed linear and quadratic terms come from the 4h2p term $\chi_{IJ}^{KL}t_{KL}^{AB}$ in \cref{eqn:amplitude_ccd_tdl}
and the neglected terms include all the remaining linear and quadratic terms. 
Our goal is to prove that each of these terms as a function of $\vk_i,\vk_j,\vk_a$ is in $\mathbb{T}(\Omega^*)$. 
It can be verified directly that these terms satisfy the periodicity condition described in $\mathbb{T}(\Omega^*)$.
Therefore we focus on showing that these terms are smooth everywhere except at $\vq = \vk_a - \vk_i = \bm{0}$ with order $0$ 
and is smooth with respect to $\vk_i, \vk_j \in \Omega^*$ when $\vq = \bm{0}$. 
We recall the algebraic singularity for multivariate functions in \cref{def:fractional2} that a periodic function 
$f(\vk_i, \vk_j, \vk_a)$ is smooth everywhere in except at $\vk_a = \vk_i$ with order $\gamma$ if with the change of variable $\vk_a \rightarrow \vk_i + \vq$, 
there exists constants $\{C_{\valpha,\vbeta}\}$ satisfying
\begin{equation}\label{eqn:amplitude_fractional}
        \left|
                \dfrac{\partial^\valpha}{\partial \vq^{\valpha}} \left( \dfrac{\partial^\vbeta}{\partial(\vk_i,\vk_j)^\vbeta}
                f(\vk_i, \vk_j, \vq)  \right)
        \right|
        \leqslant 
        C_{\valpha,\vbeta} |\vq|^{\gamma - |\valpha|},  \quad \forall \vq \in \Omega^*\setminus\{\bm{0}\},\vk_i,\vk_j\in \Omega^*, \forall \valpha,\vbeta\geqslant 0,
\end{equation}
where the inequality is extended to all $\vq \in \Omega^*\setminus \{\bm{0}\}$ by using the function smoothness.

The constant term is exactly an MP2/CCD(1) amplitude entry and lies in $\mathbb{T}(\Omega^*)$. 

All the linear terms takes the form of an integral over an intermediate momentum vector in $\Omega^*$, and 
the integrand is products of one ERI and one amplitude entry (see \cref{eqn:amplitude_map_expansion} for an example).
These linear terms can be categorized into three classes according to the number of singular points of the integrand with respect to the intermediate momentum vector, 
see \cref{tab:class_linear_term}.
The analysis of smoothness properties with respect to $\vk_i, \vk_j, \vk_a$ is similar for terms in the same class. 
Below we illustrate the analysis for one example in each class. 

\begin{table}[htbp]
        \caption{Classification of linear amplitude terms by the number of nonsmooth points with respect to the intermediate momentum vector.
        The 3h3p terms permuted by $\mathcal{P}$ are of the same class as the unpermuted ones and thus not listed. 
        The summations over intermediate orbitals, e.g., $K,L,C,D$, and the prefactor $1/\varepsilon_{IJ}^{AB}$ are omitted for brevity.
        }\label{tab:class_linear_term}
        \begin{tabular}{cc}
                \toprule
                \makecell{
                Number of singular
points
                } & Linear terms  \\
                \midrule
                0 & $\braket{AK|IC}t_{KJ}^{CB}$\\
                1 & $\braket{AK|CI}t_{KJ}^{CB}$, $\braket{AK|IC}t_{KJ}^{BC}$\\
                2 & $\braket{KL|IJ}t_{KL}^{AB}$, $\braket{AB|CD}t_{IJ}^{CD}$, $\braket{AK|CJ}t_{IK}^{CB}$\\
                \bottomrule
        \end{tabular}
\end{table}

For linear terms with no singular point, we consider the 3h3p term $\braket{AK|IC}t_{KJ}^{CB}$ detailed as (ignoring the prefactor and orbital energy fraction), 
\[
        \int_{\Omega^*}\ud\vk_k \sum_{kc} \braket{a\vk_a, k\vk_k | i\vk_i, c\vk_c} t_{kjcb}(\vk_k, \vk_j, \vk_c),
\]
where $\vk_c = \vk_a + \vk_k - \vk_i = \vk_k + \vq$.
The ERIs and the amplitudes have momentum transfers $\vk_i - \vk_a = -\vq$ and $\vk_c - \vk_k = \vq$, respectively, which are both independent of $\vk_k$.
As a result, for any $(\vk_i,\vk_j,\vq) \in (\Omega^*)^{\times 3}$ with $\vq \neq \bm{0}$, there exists an open domain containing this point where the above integrand 
is smooth with respect to $\vk_i, \vk_j, \vq$ and $\vk_k\in\Omega^*$. 
This meets the condition of the Leibniz integral rule which can then be used to prove that this integral is smooth at all points $(\vk_i, \vk_j, \vq)\in(\Omega^*)^{\times 3}$ 
with $\vq \neq \bm{0}$ and any derivative of this integral equals to the integral of the corresponding integrand derivatives. 
The algebraic singularity condition in \cref{eqn:amplitude_fractional} for this term at any $\vk_i,\vk_j, \vq\in\Omega^*$ with $\vq \neq \bm{0}$ can then 
be verified as 
\begin{align}
        \left|
        \partial^\valpha_{\vq} \partial^\vbeta_{\vk_i\vk_j}
        \int_{\Omega^*}\ud\vk_k \cdots 
        \right|
        & = 
        \left|
        \int_{\Omega^*}\ud\vk_k  
        \partial^\valpha_{\vq} \partial^\vbeta_{\vk_i\vk_j}
        \cdots
        \right|
        \nonumber \\
        & \leqslant 
        C 
        \int_{\Omega^*}\ud\vk_k  
        \sum_{kc}
        \left|
        \sum_{\substack{
                \valpha_0+\valpha_1 =\valpha \\
                \vbeta_0+\vbeta_1=\vbeta}
        }
        \partial^{\valpha_0}_{\vq} \partial^{\vbeta_0}_{\vk_i\vk_j}
        \braket{a\vk_a, k\vk_k | i\vk_i, c\vk_c} 
        \partial^{\valpha_1}_{\vq} \partial^{\vbeta_1}_{\vk_i\vk_j}
        t_{kjcb}(\vk_k, \vk_j, \vk_c)
        \right|
        \nonumber \\
        & \leqslant 
        C 
        \int_{\Omega^*}\ud\vk_k  
        \sum_{kc}
        \left|
        \sum_{\substack{
                \valpha_0+\valpha_1 =\valpha \\
                \vbeta_0+\vbeta_1=\vbeta}
        }
        C_{\valpha, \vbeta}
        |\vq|^{0-|\valpha_0|}
        |\vq|^{0-|\valpha_1|}
        \right|
        \nonumber\\
        & \leqslant 
        C_{\valpha, \vbeta} |\vq|^{-|\valpha|},
        \label{eqn:frac_estimate1}
\end{align}
where $C_{\valpha, \vbeta}$ denotes a generic constant depending on $\valpha, \vbeta$ and the third inequality uses the 
algebraic singularity of the ERIs and the amplitudes at $\vq = \bm{0}$ with order $0$. 
Lastly, the ERI terms at $\vq=\bm{0}$ are smooth with respect to $\vk_i, \vk_j$ (see \cref{eqn:eri}) and so are the amplitudes by the assumption 
$t_{kjcb} \in \mathbb{T}(\Omega^*)$. 
We thus can use the Leibniz integral rule to prove that the integral at $\vq = \bm{0}$ is smooth with respect to $\vk_i,\vk_j$.
The above discussion then shows this integral to be in $\mathbb{T}(\Omega^*)$.

For linear terms with one nonsmooth point, we consider the 3h3p term $\braket{AK|CI}t_{KJ}^{CB}$ detailed as
\[
        \int_{\Omega^*}\ud\vk_k \sum_{kc} \braket{a\vk_a, k\vk_k | c\vk_c, i\vk_i} t_{kjcb}(\vk_k, \vk_j, \vk_c),
\]
where the ERIs and the amplitudes have momentum transfers $\vk_k - \vk_i$ and $\vk_c - \vk_k = \vq$, respectively. 
By the change of variable $\vk_k \rightarrow \vk_i + \vq_1$ and the integrand periodicity, this term can be reformulated as 
\[
        \int_{\Omega^*}\ud\vq_1 \sum_{kc} \braket{a\vk_a, k(\vk_i+\vq_1) | c(\vk_a +\vq_1), i\vk_i} t_{kjcb}(\vk_i+\vq_1, \vk_j, \vk_a+\vq_1),
\]
where the integrand is nonsmooth at $\vq_1 = \bm{0}$ due to the ERIs and is asymptotically of scale $\Or(1/|\vq_1|^2)$ near $\vq_1 =\bm{0}$. 
As a result, for any $(\vk_i,\vk_j,\vq) \in (\Omega^*)^{\times 3}$ with $\vq \neq \bm{0}$, there exists an open domain containing this point where 
the concerned integrand is smooth with respect to $\vk_i, \vk_j, \vq$ and $\vq_1\in\Omega^*\setminus\{\bm{0}\}$
and its absolute value is bounded by $C/|\vq_1|^2$ from above which is integrable in $\Omega^*$.
This still meets the condition of the Leibniz integral rule which can then be used to prove that this integral is smooth at all points 
$(\vk_i,\vk_j,\vq) \in (\Omega^*)^{\times 3}$ with $\vq \neq \bm{0}$. 
The algebraic singularity of the integral at $\vq = \bm{0}$ can be similarly proved as in \cref{eqn:frac_estimate1}, 
except now that the ERI derivatives are estimated as 
\[
        \left| 
                \partial^{\valpha_0}_{\vq} \partial^{\vbeta_0}_{\vk_i\vk_j}
                \braket{a\vk_a, k(\vk_i+\vq_1) | c\vk_c, i\vk_i} 
        \right|
        \leqslant C_{\valpha, \vbeta} /|\vq_1|^2, 
\]
by noting that the ERI here has momentum transfer $\vq_1$ and is smooth with respect to $\vk_i,\vk_j,\vq$. 

For linear terms with two nonsmooth points, we consider the 4h2p linear term $\braket{KL|IJ}t_{KL}^{AB}$ as detailed in \cref{eqn:amplitude_map_expansion}. 
We first denote the ERI and the amplitude with band indices $(k,l)$ as  
\begin{align*}
        F^{kl}_1(\vq_1, \vk_i, \vk_j, \vk_a) 
        & =     
        \braket{k(\vk_i - \vq_1), l(\vk_j + \vq_1)| i\vk_i,j \vk_j},
        \\
        F^{kl}_2(\vq_2, \vk_i, \vk_j, \vk_a)
        & = 
        t_{klab}(\vk_a - \vq_2, \vk_i+\vk_j - \vk_a + \vq_2, \vk_a),
\end{align*}
where $\vq_1$ and $\vq_2$ are the respective momentum transfers of the two terms.
Note that $F_1^{kl}$ does not depend on $\vk_a$ which is included as a variable for general cases. 
Both the ERI and the amplitude depend on $\vk_k$ and $\vk_l$ and these dependencies will be converted to that of $\vq_1, \vq_2$ 
using change of variables and the crystal momentum conservation. 
For example, we have 
$t_{klab}(\vk_k, \vk_i+\vk_j - \vk_k, \vk_a)=t_{klab}(\vk_k, \vk_l, \vk_a)=t_{klab}(\vk_a - \vq_2, \vk_i+\vk_j - \vk_a + \vq_2, \vk_a)$.

Note that $F_1^{kl}$ can be verified to be periodic and smooth everywhere with respect to $\vk_i,\vk_j,\vk_a,\vq_1 \in\Omega^*$ except at 
$\vq_1 = \bm{0}$ with order $0, -1,$ or $-2$ depending on the relation between $(k,l)$ and $(i,j)$. 
Similarly, $F_2^{kl}$ is periodic and smooth everywhere with respect to $\vk_i, \vk_j, \vk_a, \vq_2 \in \Omega^*$ except at $\vq_2 = \bm{0}$ 
with order $0$ by the assumption $t_{klab}\in\mathbb{T}(\Omega^*)$. 
Using this notation, the 4h2p linear term can be reformulated as 
\begin{align*}
        & \dfrac{1}{|\Omega^*|}\int_{\Omega^*}\ud\vk_k \sum_{kl} F^{kl}_1(\vk_i - \vk_k, \vk_i, \vk_j, \vk_a) 
        F^{kl}_2(\vk_a - \vk_k, \vk_i, \vk_j, \vk_a)
        \\
        & \qquad = 
        \dfrac{1}{|\Omega^*|}\int_{\Omega^*}\ud\vq_1 \sum_{kl} F^{kl}_1(\vq_1, \vk_i, \vk_j, \vk_a) 
        F^{kl}_2( \vq_1 - (\vk_i - \vk_a), \vk_i, \vk_j, \vk_a),
\end{align*}
where the second equation applies $\vk_k \rightarrow \vk_i - \vq_1$ and uses the integrand periodicity with respect to $\vk_k\in\Omega^*$. 
Due to the integrand being nonsmooth at $\vq_1 = \vk_i - \vk_a$ and $\vq_1 = \bm{0}$, the smoothness property of the integral with respect to $\vk_i,\vk_j,\vk_a$ 
cannot be obtained using the Leibniz integral rule as in the previous two cases. 
We provide a technical lemma analyzing the singularity structure of such a function in integral form.
\begin{lem}\label{lem:nonsmooth_integral}
        Let $f(\vx_1, \vx_2, \vz)$ be defined in $V\times V\times V_Z$ with $V = [-\frac12, \frac12]^d$ and $V_Z\subset \mathbb{R}^{d_z}$ of arbitrary dimension $d_z$. 
        Assume $f(\vx_1,\vx_2,\vz)$ is periodic with respect to $\vx_1, \vx_2 \in V$ and smooth everywhere except at
        $\vx_1 = \bm{0}$ or $\vx_2 = \bm{0}$ where the nonsmooth behavior can be characterized as
        \begin{equation}\label{eqn:singular_behavior}
                \left|
                \dfrac{\partial^{\valpha_1}}{\partial \vx_1^{\valpha_1}} 
                \dfrac{\partial^{\valpha_2}}{\partial \vx_2^{\valpha_2}} 
                \left(
                        \dfrac{\partial^{\vbeta}}{\partial \vz^{\vbeta}} 
                        f(\vx_1, \vx_2, \vz)  
                \right)
                \right|
                \leqslant 
                C_{\valpha_1, \valpha_2, \vbeta} |\vx_1|^{\gamma_1 - |\valpha_1|} |\vx_2|^{\gamma_2 - |\valpha_2|}, 
                \forall \vx_1,\vx_2\in V\setminus\{\bm{0}\}, \vz\in V_Z,
        \end{equation}
        with any derivative orders $\valpha_1,\valpha_2,\vbeta \geqslant 0$. 

        Assuming $ \min_i \gamma_i\geqslant -d+1$, the partially integrated function, 
        \[
        F(\vy, \vz) = \int_{V} \ud\vx f(\vx, \vx-\vy, \vz),
        \]      
        is smooth everywhere in $V \times V_Z$ except at $\vy = \bm{0}$ with order $\max(\gamma_1, \gamma_2)$.
\end{lem}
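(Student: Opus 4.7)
The plan is to split the claim into two separate parts: smoothness of $F$ on $(V \setminus \{\bm{0}\}) \times V_Z$, and the algebraic singularity estimate $|\partial_\vy^\valpha \partial_\vz^\vbeta F(\vy, \vz)| \leq C_{\valpha, \vbeta} |\vy|^{\max(\gamma_1, \gamma_2) - |\valpha|}$ for $\vy \neq \bm{0}$. Because the hypothesis \cref{eqn:singular_behavior} is uniform in $\vz$ for each fixed $\vbeta$, every $\vz$-derivative can be moved inside the integral without affecting the singularity structure in $\vx_1, \vx_2$, so it suffices to treat the case $\vbeta = 0$ and track only $\vy$-derivatives.

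For smoothness on $(V \setminus \{\bm{0}\}) \times V_Z$, I would fix $\vy_0 \neq \bm{0}$ and introduce a $\vy$-independent smooth partition of unity $1 = \psi_A(\vx) + \psi_B(\vx) + \psi_C(\vx)$ with $\psi_A$ supported in a small ball around $\vx = \bm{0}$ and $\psi_B$ around $\vx = \vy_0$, the balls being chosen small enough that their supports remain disjoint as $\vy$ varies in a neighborhood of $\vy_0$. Splitting $F = F_A + F_B + F_C$, the piece $F_A$ has only the $\vx = \bm{0}$ singularity in its support, and since $\vx - \vy$ stays in a compact region bounded away from $\bm{0}$, all $\vy$-derivatives hit $\partial_{\vx_2}^\valpha f$ evaluated at regular points, so differentiation under the integral is justified by $\gamma_1 + d > 0$. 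In $F_B$, I would change variables $\vx \to \vx' + \vy$ so the singular point $\vx_2 = \bm{0}$ becomes $\vx' = \bm{0}$; now $\vy$-differentiation hits the first argument $\vx' + \vy$, which is bounded away from $\bm{0}$, and the analogous estimate applies. The piece $F_C$ has a smooth integrand. This shows $F \in C^\infty$ on $(V \setminus \{\bm{0}\}) \times V_Z$.

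For the singularity bound at $\vy = \bm{0}$, I would use a $\vy$-dependent partition $1 = \phi_0(\vx, \vy) + \phi_\vy(\vx, \vy) + \phi_\infty(\vx, \vy)$ at scale $|\vy|$, with $\phi_0$ supported in $|\vx| \leq |\vy|/2$ and $\phi_\vy$ supported in $|\vx - \vy| \leq |\vy|/2$ (so these two have disjoint supports). On the support of $\phi_0$ one has $|\vx - \vy| \sim |\vy|$, and expanding $\partial_\vy^\valpha$ by Leibniz using $\partial_\vy f(\vx, \vx - \vy, \vz) = -(\partial_{\vx_2} f)(\vx, \vx - \vy, \vz)$ gives
\begin{equation*}
|\partial_\vy^\valpha F_0| \leq C \sum_{\vbeta \leq \valpha} |\vy|^{-|\valpha| + |\vbeta|} \int_{|\vx| \leq |\vy|/2} |\vx|^{\gamma_1}\, |\vy|^{\gamma_2 - |\vbeta|} \ud\vx \leq C |\vy|^{\gamma_1 + \gamma_2 + d - |\valpha|},
\end{equation*}
where the bound $\gamma_1 + d > 0$ controls the radial integral, and each $\vy$-derivative of $\phi_0$ supplies at most a factor $|\vy|^{-1}$. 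A symmetric translation $\vx \to \vx + \vy$ handles $F_\vy$ with the same bound. For $F_\infty$, I would dyadically decompose its support into shells $|\vx| \sim 2^k|\vy|$ for $k = 0, 1, \ldots, K$ with $2^K |\vy| \sim 1$; for $k \geq 1$ one has $|\vx - \vy| \sim |\vx|$, and the shell contribution is bounded by $(2^k|\vy|)^{\gamma_1 + \gamma_2 + d - |\valpha|}$, whose geometric sum is at most $C|\vy|^{\gamma_1 + \gamma_2 + d - |\valpha|}$ up to an absolute constant. Combining the three pieces yields $|\partial_\vy^\valpha F| \leq C|\vy|^{\gamma_1 + \gamma_2 + d - |\valpha|}$, and the hypothesis $\min_i \gamma_i \geq -d + 1$ gives $\gamma_1 + \gamma_2 + d \geq \max(\gamma_1, \gamma_2) + 1$, so the required bound $|\vy|^{\max(\gamma_1, \gamma_2) - |\valpha|}$ follows for $|\vy|$ bounded above.

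The main obstacle will be handling $F_\infty$: unlike the two localized pieces its support is not confined to a ball of radius $|\vy|$, so the scaling argument must be combined with dyadic summation, and one must verify that the geometric series closes cleanly in all regimes of $\gamma_1 + \gamma_2 + d - |\valpha|$. The hypothesis $\min_i \gamma_i \geq -d + 1$ is what leaves a spare factor of $|\vy|$ over $|\vy|^{\max(\gamma_1, \gamma_2) - |\valpha|}$, absorbing any borderline logarithmic losses from shells where the exponent crosses zero. A secondary bookkeeping issue is that each $\vy$-derivative of a cutoff $\phi_i$ contributes a factor at most $|\vy|^{-1}$ but is supported on a transition annulus of volume $|\vy|^d$, so Leibniz must be applied carefully to ensure the loss from derivatives of cutoffs is fully matched by the radial integrals of $f$.
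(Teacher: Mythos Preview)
Your approach is correct and takes a genuinely different route from the paper. For smoothness away from $\vy=\bm{0}$, the paper does not use a partition of unity; it fixes a ball $B_\sigma$ around $\vy$ (not containing $\bm{0}$), applies Leibniz on $\int_{V\setminus B_\sigma}$, and treats $\int_{B_\sigma}$ by a first-principles difference-quotient computation that produces an explicit boundary term $\int_{\partial B_\sigma} f\,\vn\,dS$. This yields a three-term formula for $\nabla F$, and higher derivatives follow by recursing on the $\int_{B_\sigma}$ piece (whose integrand $\partial_1 f$ now has $\gamma_1$ replaced by $\gamma_1-1$). For the singularity estimate the paper reuses this three-term formula with $B_\sigma=B(\vy,|\vy|/2)$ and bounds each term directly, with no dyadic decomposition.

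Your partition-of-unity route is more elementary and avoids the boundary integral entirely. One point to tighten: the claim that the dyadic sum for $F_\infty$ is $\leq C|\vy|^{\gamma_1+\gamma_2+d-|\valpha|}$ is not right when $s:=\gamma_1+\gamma_2+d-|\valpha|\geq 0$, since then the geometric series is dominated by the outermost shell and gives $O(1)$ (or $O(\log(1/|\vy|))$ at $s=0$), not $O(|\vy|^s)$. You anticipate this, and the fix is immediate: in that regime the target exponent $\max(\gamma_1,\gamma_2)-|\valpha|$ is $\leq 0$ (at least when $\max(\gamma_1,\gamma_2)\leq 0$, which is the only case the paper uses), so $O(1)\leq C|\vy|^{\max-|\valpha|}$ holds trivially. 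It is worth noting that the paper's own bound on the $V\setminus B_\sigma$ term uses the crude majorization $|\vx-\vy|^{\gamma_2-|\valpha|}\leq \sigma^{\gamma_2-|\valpha|}$, which delivers only order $\min(\gamma_1,\gamma_2)$; it is precisely your dyadic treatment of the far region that recovers the stated $\max$ order.
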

\begin{proof}
        See \cref{app:nonsmooth_integral}. 
\end{proof}
To convert to the condition in \cref{lem:nonsmooth_integral}, we reformulate the integrand for the 4h2p linear term as
\[
        f(\vq_1, \vq_2; \vk_i, \vk_j, \vk_a) = \sum_{kl} F^{kl}_1(\vq_1, \vk_i, \vk_j, \vk_a) F^{kl}_2(\vq_2, \vk_i, \vk_j, \vk_a),
\]
which satisfies \cref{eqn:singular_behavior} with $\vx_1 = \vq_1$, $\vx_2 = \vq_2$, $\vz = (\vk_i,\vk_j,\vk_a)$, $\gamma_1 = -2,$ and $\gamma_2 = 0$.
\cref{lem:nonsmooth_integral} shows that the partially integrated function,
\[
        \dfrac{1}{|\Omega^*|}\int_{\Omega^*}\ud\vq_1f(\vq_1, \vq_1 - \vy; \vk_i, \vk_j, \vk_a),
\]
is periodic and smooth everywhere with respect to $\vy, \vk_i, \vk_j, \vk_a\in\Omega^*$ except at $\vy = \bm{0}$ with order $0$.
Since the 4h2p linear term equals this function with $\vy = \vk_i - \vk_a$, we can follow \cref{def:fractional2} to show that 
it is periodic and smooth everywhere except at $\vk_a = \vk_i$ with order $0$. 
This proves the 4h2p linear term to be in $\mathbb{T}(\Omega^*)$.

All the quadratic terms are also of the same integral form where the integration is over two intermediate momentum vectors in $\Omega^*$ and 
the integrand is products of an ERI and two amplitude entries (see \cref{eqn:amplitude_map_expansion} for an example). 
Smoothness property analysis for these quadratic terms can be decomposed into two subproblems that can be addressed by the earlier analysis for linear terms.  
We use the 4h2p quadratic term in \cref{eqn:amplitude_map_expansion} to demonstrate the analysis. 

In the 4h2p quadratic term, the momentum vectors $\vk_l = \vk_i+\vk_j-\vk_k$ and $\vk_d = \vk_i + \vk_j - \vk_c$, and the integrand is a function of $\vk_k$ and $\vk_c$
for any fixed $\vk_i, \vk_j, \vk_a$. 
We first consider the partial integration over $\vk_c$ as
\[
H_{ijkl}(\vk_i, \vk_j, \vk_k) = \int_{\Omega^*}\ud\vk_c \sum_{cd} \braket{k\vk_k, l\vk_l | c\vk_c, d\vk_d} t_{ijcd}(\vk_i, \vk_j, \vk_c). 
\]
The integrand here as a function of $\vk_c$ is nonsmooth at $\vk_c = \vk_k$ and $\vk_c = \vk_i$ both with order $0$, due to 
the ERI and the amplitude. 
It is thus of the same form as the linear term with two nonsmooth points studied earlier, and we can use the same analysis to show that 
this intermediate function $H_{ijkl}(\vk_i, \vk_j, \vk_k)$ is smooth everywhere except at $\vk_i = \vk_k$ with order $0$.     
The overall quadratic term can then be written as 
\[
\int_{\Omega^*}\ud\vk_k \sum_{kl} H_{ijkl}(\vk_i, \vk_j, \vk_k)t_{klab}(\vk_k, \vk_l, \vk_a).
\]
The integrand here is of similar form to the 4h2p linear term, simply with the ERI term $\braket{k\vk_k, l\vk_l | i\vk_i, j\vk_j}$ replaced 
by $H_{ijkl}$ which has the same nonsmooth behavior at $\vk_k = \vk_i$ with order $0$. 
Using the same analysis for linear terms based on \cref{lem:nonsmooth_integral} can then show that this term as a function of 
$\vk_i, \vk_j, \vk_a$ is smooth everywhere except at $\vk_i = \vk_a$ with order $0$. 
This proves the 4h2p quadratic term to be in $\mathbb{T}(\Omega^*)$. 
All the other quadratic terms can be similarly analyzed using the analysis for the three types of linear terms above. 

With all the constant, linear, and quadratic terms in $\left[\mathcal{F}_\text{TDL}(t)\right]_{ijab}$ shown to be in $\mathbb{T}(\Omega^*)$, 
we finish the proof of \cref{lem:thm1_nonsmooth}.

\subsection{Proof of \cref{lem:thm1_energy}: Error in energy calculation using exact amplitude}
Consider an arbitrary amplitude $t \in \mathbb{T}(\Omega^*)^{n_\text{occ}\times n_\text{occ}\times n_\text{vir}\times n_\text{vir}}$. 
By expanding the antisymmetrized ERI, the finite-size error in the energy calculation using $t$ can be decomposed into the errors in 
the direct and the exchange term calculations as
\begin{align*}
        \mathcal{G}_\text{TDL}(t) - \mathcal{G}_{N_\vk}(\mathcal{M}_\mathcal{K} t)
        & = 
        \dfrac{2}{|\Omega^*|^3}
        \mathcal{E}_{\Omega^*\times\Omega^*\times\Omega^*}
        \left(
        \sum_{ijab}
        \braket{i\vk_i, j\vk_j | a\vk_a, b\vk_b} t_{ijab}(\vk_i, \vk_j, \vk_a), 
        \mathcal{K}\times\mathcal{K}\times\mathcal{K}
        \right)
        \\
        &\quad  
        - \dfrac{1}{|\Omega^*|^3}
        \mathcal{E}_{\Omega^*\times\Omega^*\times\Omega^*}
        \left(
        \sum_{ijab}
        \braket{i\vk_i, j\vk_j | b\vk_b, a\vk_a} t_{ijab}(\vk_i, \vk_j, \vk_a),
        \mathcal{K}\times\mathcal{K}\times\mathcal{K}
        \right).
\end{align*}

For each set of band indices $i,j,a,b$, we denote the integrand for the direct term calculation, i.e., the first term above, with the change of variable 
$\vk_a \rightarrow \vk_i + \vq$ as 
\[
F_\text{d}^{ijab}(\vk_i, \vk_j, \vq) = \braket{i\vk_i, j\vk_j | a(\vk_i + \vq), b(\vk_j - \vq)} t_{ijab}(\vk_i, \vk_j, \vk_i + \vq).
\]
The momentum transfers of the ERI and the amplitude entry are both equal to $\vq$. 
Expanding the ERI near $\vq = \bm{0}$ and using the assumption $t_{ijab}\in \mathbb{T}(\Omega^*)$, we can show that $F_\text{d}^{ijab}$ is periodic 
and smooth everywhere with respect to $\vk_i,\vk_j, \vq\in \Omega^*$ except at $\vq = \bm{0}$ with order $0$. 
Since $\vk_i$ and $\vk_a$ are sampled on the same mesh $\mathcal{K}$, the induced mesh for $\vq = \vk_a - \vk_i$ (map each $\vq$ to $\vq + \vG \in \Omega^*$ with some $\vG \in\mathbb{L}^*$ using the integrand 
periodicity with respect to $\Omega^*$) is of the same size as $\mathcal{K}$ and always contains $\vq = \bm{0}$. 
Denote this induced mesh as $\mathcal{K}_\vq$.
The quadrature error in the direct term calculation with each set of $i,j,a,b$ can then be formulated and split as
\begin{align}
    & \mathcal{E}_{\Omega^*\times\Omega^*\times\Omega^*}
    \left(
        F_\text{d}^{ijab}(\vk_i, \vk_j, \vq), \mathcal{K}\times\mathcal{K}\times\mathcal{K}_\vq
    \right)
    \nonumber \\
    = &  
    \mathcal{E}_{\Omega^*\times\Omega^*}
    \left(
        \int_{\Omega^*}\ud\vq
        F_\text{d}^{ijab}(\vk_i, \vk_j, \vq), \mathcal{K}\times\mathcal{K}
    \right)
    + 
    \dfrac{|\Omega^*|^2}{N_\vk^2}
    \sum_{\vk_i, \vk_j \in \mathcal{K}}
    \mathcal{E}_{\Omega^*}
    \left(
        F_\text{d}^{ijab}(\vk_i, \vk_j, \vq), \mathcal{K}_\vq
    \right).
    \label{eqn:error_mp2_split}
\end{align}
Fixing $\vk_i, \vk_j\in\mathcal{K}$, $F_\text{d}^{ijab}(\vk_i, \vk_j, \vq)$ as a function of $\vq$ is periodic and smooth everywhere in $\Omega^*$ except 
at $\vq = \bm{0}$ with order $0$ from the analysis above.
\cref{lem:quaderror1} provides quadrature error estimates for such periodic functions with a single point of algebraic singularity, 
and specifically in this case we have
\[
    \left|
    \mathcal{E}_{\Omega^*}
    \left(
        F_\text{d}^{ijab}(\vk_i, \vk_j, \vq), \mathcal{K}_\vq
    \right)
    \right|
    \leqslant C N_\vk^{-1},
    \quad
    \forall \vk_i,\vk_j \in \mathcal{K},
\]
where constant $C$ is independent of $\vk_i, \vk_j$ by using the algebraic singularity characterization of $F_\text{d}^{ijab}$ at $\vq=\bm{0}$  
and the prefactor estimate in \cref{lem:quaderror1} (see \cref{rem:quaderror1}).

Since $F_\text{d}^{ijab}$ is periodic and smooth with respect to $\vk_i, \vk_j\in\Omega^*$, $\int_{\Omega^*}\ud\vq F_\text{d}^{ijab}(\vk_i, \vk_j, \vq)$ is also periodic and 
smooth with respect to $\vk_i,\vk_j\in\Omega^*$ using the Leibniz integral rule.
According to \cref{lem:quaderror0}, the quadrature error for this partially integrated function (the first term in \cref{eqn:error_mp2_split})
decays super-algebraically as
\[
    \left|
    \mathcal{E}_{\Omega^*\times\Omega^*}
    \left(
        \int_{\Omega^*}\ud\vq
        F_\text{d}^{ijab}(\vk_i, \vk_j, \vq), \mathcal{K}\times\mathcal{K}
    \right)
    \right|
    \leqslant C_l N_{\vk}^{-l},\quad \forall l > 0.
\]
Plugging the above two estimates into \cref{eqn:error_mp2_split} proves that the quadrature error in the direct term calculation scales as 
\[
        \left|
                \mathcal{E}_{\Omega^*\times\Omega^*\times\Omega^*}
                \left(
                \sum_{ijab}
                \braket{i\vk_i, j\vk_j | a\vk_a, b\vk_b} t_{ijab}(\vk_i, \vk_j, \vk_a), 
                \mathcal{K}\times\mathcal{K}\times\mathcal{K}
                \right)
        \right|
    \leqslant C N_\vk^{-1}.
\]

Similar analysis can be applied to the exchange term where we formulate the integrand using two changes of variables $\vk_j \rightarrow \vk_a + \vq_1$ 
and $\vk_i \rightarrow \vk_a - \vq_2$ as 
\[
F_\text{x}^{ijab}(\vk_a, \vq_1, \vq_2) =         
        \braket{i(\vk_a - \vq_2), j(\vk_a + \vq_1) | b(\vk_a - \vq_2 + \vq_1), a\vk_a} t_{ijab}(\vk_a-\vq_2, \vk_a+\vq_1, \vk_a),
\]
where the momentum transfers of the ERI and the amplitude are $\vq_1$ and $\vq_2$, respectively. 
The ERI and the amplitude are smooth everywhere with respect to $\vk_a,\vq_1,\vq_2\in\Omega^*$ except at 
$\vq_1 = \bm{0}$ and $\vq_2 = \bm{0}$, respectively, both with order $0$. 
Similar to the discussion for the direct term, the exchange term calculation is equivalent to the trapezoidal rule over $F_\text{x}^{ijab}$ using  
uniform mesh $\mathcal{K}\times\mathcal{K}_\vq\times\mathcal{K}_\vq$. 
The associated quadrature error with each set of $(i,j,a,b)$ can be split as 
\begin{align}
        & \mathcal{E}_{\Omega^*\times\Omega^*\times\Omega^*}
          \left(
              F_\text{x}^{ijab}(\vk_a, \vq_1, \vq_2), \mathcal{K}\times\mathcal{K}_\vq\times\mathcal{K}_\vq
          \right)
          \nonumber \\
       = & 
        \mathcal{E}_{\Omega^*}
        \left(
            \int_{\Omega^*\times \Omega^*}\ud\vq_1\ud\vq_2
            F_\text{x}^{ijab}(\vk_a, \vq_1, \vq_2), \mathcal{K}
        \right)
        + 
        \dfrac{|\Omega^*|}{N_\vk}
        \sum_{\vk_a \in \mathcal{K}}
        \mathcal{E}_{\Omega^*\times \Omega^*}
        \left(
            F_\text{x}^{ijab}(\vk_a, \vq_1, \vq_2), \mathcal{K}_\vq\times \mathcal{K}_\vq
        \right).
        \label{eqn:error_mp2_exchange_split}
\end{align}
The first term decays super-algebraically since $\int_{\Omega^*\times \Omega^*}\ud\vq_1\ud\vq_2F_\text{x}^{ijab}(\vk_a, \vq_1, \vq_2)$ is smooth 
and periodic with respect to $\vk_a \in \Omega^*$ using the Leibniz integral rule. 
For the second term with each fixed $\vk_a \in \mathcal{K}$, $F_\text{x}^{ijab}$ can be viewed as a product of two periodic functions, 
$f_1(\vk_a, \vq_1, \vq_2)f_2(\vk_a, \vq_1, \vq_2)$, where $f_1$ is smooth everywhere except at $\vq_1 = \bm{0}$ with order $0$
and $f_2$ is smooth everywhere except at $\vq_2 = \bm{0}$ with order $0$. 
\cref{lem:quaderror3} provides quadrature error estimates for periodic functions in such a product form, and specifically in this case we have 
\[
        \left|
        \mathcal{E}_{\Omega^*\times \Omega^*}
        \left(
            F_\text{x}^{ijab}(\vk_a, \vq_1, \vq_2), \mathcal{K}_\vq\times \mathcal{K}_\vq
        \right)
        \right|
        \leqslant C N_\vk^{-1},
        \quad 
        \forall \vk_a \in \mathcal{K},
\]
where constant $C$ can be proved independent of $\vk_a$ using the algebraic singularity characterization of $f_1$ and $f_2$  
and the prefactor estimate in \cref{lem:quaderror3} (see \cref{rem:quaderror3}).

Plugging these two estimates into \cref{eqn:error_mp2_exchange_split} proves that the quadrature error in the exchange term calculation scales as 
\[
        \left|
        \mathcal{E}_{\Omega^*\times\Omega^*\times\Omega^*}
        \left(
        \sum_{ijab}
        \braket{i\vk_i, j\vk_j | b\vk_b, a\vk_a} t_{ijab}(\vk_i, \vk_j, \vk_a),
        \mathcal{K}\times\mathcal{K}\times\mathcal{K}
        \right)
        \right|
        \leqslant C N_\vk^{-1}.
\]

Combining the above estimates for the direct and exchange terms together, we have 
\[
\left|
        \mathcal{G}_\text{TDL}(t) - \mathcal{G}_{N_\vk}(\mathcal{M}_\mathcal{K} t)
\right|
= \Or(N_\vk^{-1}), 
\quad
\forall 
t \in \mathbb{T}(\Omega^*)^{n_\text{occ}\times n_\text{occ}\times n_\text{vir}\times n_\text{vir}},
\]
which covers the case of the exact CCD$(n)$ amplitudes with any $n > 0$.

\subsection{Proof of \cref{lem:thm1_amplitude_exact}: Amplitude error in a single iteration}
Consider the error in the amplitude calculation using an arbitrary amplitude 
$t \in \mathbb{T}(\Omega^*)^{n_\text{occ}\times n_\text{occ}\times n_\text{vir}\times n_\text{vir}}$.
Fixing a set of $(i,j,a,b)$ and $\vk_i,\vk_j,\vk_a\in\mathcal{K}$, the corresponding error entry can be detailed using the amplitude mapping definitions 
in \cref{eqn:amplitude_ccd} and \cref{eqn:amplitude_ccd_tdl} as 
\begin{align}
        & 
        \left[
                \mathcal{M}_{\mathcal{K}}\mathcal{F}_\text{TDL}(t) - \mathcal{F}_{N_\vk}(\mathcal{M}_{\mathcal{K}}t) 
        \right]_{ijab, \vk_i\vk_j\vk_a}
        \nonumber \\
        = & 
        \dfrac{1}{\varepsilon_{i\vk_i, j\vk_j}^{a\vk_a, b\vk_b}}
        \dfrac{1}{|\Omega^*|}
        \mathcal{E}_{\Omega^*}
        \left(
                \sum_{kl} \braket{k\vk_k, l\vk_l | i\vk_i, j\vk_j} t_{klab}(\vk_k, \vk_l, \vk_a), 
                \mathcal{K}
        \right)
        \nonumber \\
        & + 
                \dfrac{1}{\varepsilon_{i\vk_i, j\vk_j}^{a\vk_a, b\vk_b}}
                \dfrac{1}{|\Omega^*|^2}\mathcal{E}_{\Omega^*\times\Omega^*}\left(\sum_{klcd}
                \braket{k\vk_k, l\vk_l | c\vk_c, d\vk_d} t_{ijcd}(\vk_i, \vk_j, \vk_c)t_{klab}(\vk_k, \vk_l, \vk_a), \mathcal{K}\times\mathcal{K}
                \right)
        \nonumber\\
        & + \cdots,
        \label{eqn:error_amp_expansion}
\end{align}
where the constant terms cancel each other and the listed two quadrature errors are the errors in the 4h2p linear and 4h2p quadratic term calculations. 
The neglected terms are the errors in remaining linear and quadratic terms calculations which can all be similarly formulated as quadrature errors 
of trapezoidal rules. 
The problem is thus reduced to the error estimate for trapezoidal rules applied to integrands defined by different amplitude terms 
in the amplitude equation.

As shown in \cref{lemproof:thm1_nonsmooth}, the linear terms can be categorized into three classes listed in \cref{tab:class_linear_term} where the 
integrands respectively have zero, one, and two nonsmooth points. 
For terms in each class, their quadrature errors can be estimated similarly and we below demonstrate the error estimate for one example in each class. 

For linear terms with zero nonsmooth point, we consider the 3h3p term $\braket{AK|IC}t_{KJ}^{CB}$
detailed as, 
\[
        \int_{\Omega^*}\ud\vk_k \sum_{kc} \braket{a\vk_a, k\vk_k | i\vk_i, c\vk_c} t_{kjcb}(\vk_k, \vk_j, \vk_c).
\]
The ERIs and the amplitudes have momentum transfers $\vk_i-\vk_a =-\vq$ and $\vk_c-\vk_k = \vq$, respectively, which are independent
of $\vk_k$.
Therefore, for any $\vk_i, \vk_j, \vk_a\in\Omega^*$, the integrand is smooth and periodic with respect to $\vk_k$
and thus has the quadatrure error decay super-algebraically according to \cref{lem:quaderror0}, i.e.,
\begin{equation}
        \left|
                \mathcal{E}_{\Omega^*}
                \left(
                        \sum_{kc} \braket{a\vk_a, k\vk_k | i\vk_i, c\vk_c} t_{kjcb}(\vk_k, \vk_j, \vk_c),
                        \mathcal{K}
                \right)
        \right|
        \leqslant C_l N_\vk^{-l/3}, \quad \forall l > 0.
\end{equation}
where constant $C_l$ can be shown independent of $\vk_i, \vk_j, \vk_a\in\Omega^*$ using the prefactor estimate 
in \cref{lem:quaderror0} and the uniform boundedness of integrand derivatives over $\vk_k$ for all $\vk_i,\vk_j,\vk_a$ (see \cref{rem:quaderror0}).

For linear terms with one nonsmooth point, we consider the 3h3p term $\braket{AK|CI}t_{KJ}^{CB}$ detailed as
\[
        \int_{\Omega^*}\ud\vk_k \sum_{kc} \braket{a\vk_a, k\vk_k | c\vk_c, i\vk_i} t_{kjcb}(\vk_k, \vk_j, \vk_c),
\]
where the ERIs and the amplitudes have momentum transfers $\vk_k - \vk_i$ and $\vk_c - \vk_k = \vq$, respectively. 
For any $\vk_i, \vk_j, \vk_a\in\Omega^*$, these ERIs are smooth everywhere in $\Omega^*$ except at $\vk_k = \vk_i$
with order $0$, $-1$, or $-2$ depending on the relation between $(k,c)$ and $(i,a)$. 
It can then be verified that the overall integrand is smooth everywhere except at $\vk_k = \vk_i$ with order $-2$ due to the product term  
with $k=i,c=a$. 
\cref{lem:quaderror1} provides the quadrature error estimate for such a periodic function that has algebraic singularity at one point, 
and specifically in this case we have
\begin{equation}
         \left|
                \mathcal{E}_{\Omega^*}
                \left(
                        \sum_{kc} \braket{a\vk_a, k\vk_k | c\vk_c, i\vk_i} t_{kjcb}(\vk_k, \vk_j, \vk_c),
                        \mathcal{K}
                \right)
        \right|
        \leqslant C N_\vk^{-1/3},
\end{equation}
where constant $C$ can be shown independent of $\vk_i, \vk_j, \vk_a$ using the prefactor estimate in \cref{lem:quaderror1} and the 
algebraic singularity characterization of the ERIs and the amplitudes (see \cref{rem:quaderror1}).

For linear terms with two nonsmooth points, we consider the 4h2p term $\braket{KL|IJ}t_{KL}^{AB}$ detailed in \cref{eqn:error_amp_expansion}. 
First denote the integrand with each set of $(k,l)$ as
\begin{equation}\label{eqn:integrand_amplitude}
        F^{kl}(\vk_k) = 
        \braket{k\vk_k, l\vk_l| i\vk_i,j \vk_j}
        t_{klab}(\vk_k, \vk_l, \vk_a).
\end{equation}
The ERI in $F^{kl}(\vk_k)$ is smooth everywhere except at $\vk_k = \vk_i$ with order $0$, $-1$, or $-2$ 
depending on the relation between $(k,l)$ and $(i,j)$. 
The amplitude in $F^{kl}(\vk_k)$ is smooth everywhere except at $\vk_k = \vk_a$ with order $0$. 
Applying the change of variable $\vk_k \rightarrow \vk_i - \vq_1$, $F^{kl}(\vq_1)$ can be formulated as the product of two periodic functions, $f_1(\vq_1)f_2(\vq_1)$, 
where $f_1(\vq_1)$ is nonsmooth at $\vq_1 = \bm{0}$ with order $0$, $-1$, or $-2$ and $f_2(\vq_1)$ is nonsmooth 
at $\vq_1 = \vk_i - \vk_a$ with order $0$. 
Quadrature error of periodic functions in such a product form is estimated by \cref{lem:quaderror2} when $\vk_i\neq\vk_a \in \mathcal{K}$ and 
by \cref{lem:quaderror1} when $\vk_i = \vk_a \in \mathcal{K}$ as 
\begin{equation*}
        \left|
        \mathcal{E}_{\Omega^*}\left(
        F^{kl}(\vq_1), \mathcal{K}_\vq
        \right) 
        \right|
        \leqslant 
        C
        \begin{cases}
                N_\vk^{-1} & k\neq i, l\neq j \\
                N_\vk^{-\frac23} & k=i, l\neq j \text{ or } k\neq i, l=j\\
                N_\vk^{-\frac13} & k=i, l=j 
        \end{cases}
\end{equation*}
where constant $C$ can be shown independent of $\vk_i,\vk_j,\vk_a$ using the prefactor estimates in the two lemmas 
and the algebraic singularity characterization of the ERIs and the amplitudes (see \cref{rem:quaderror1} and \cref{rem:quaderror2}).
Summing over the above estimates for each set of $(k,l)$, the quadrature error in the 4h2p linear term calculation can be estimated as 
\begin{align*}
        \left|
        \mathcal{E}_{\Omega^*}
        \left(
                \sum_{kl} \braket{k\vk_k, l\vk_l | i\vk_i, j\vk_j} t_{klab}(\vk_k, \vk_l, \vk_a), 
                \mathcal{K}
        \right)
        \right|
        & \leqslant 
        C N_\vk^{-\frac13}.
\end{align*}

Similar to the linear term case, all the quadratic terms and their quadrature error estimates can be categorized into four classes according to the nonsmoothness with respect 
to the two intermediate momentum vectors, as listed in \cref{tab:class_quadratic_term}. 

\begin{table}[htbp]
        \caption{Classification of quadratic amplitude terms by the nonsmooth points with respect to the intermediate momentum vectors.
        We use $\vk_1, \vk_2$ to denote the two integration variables after proper change of variables over the intermediate momentum vectors and 
        $\vx_1, \vx_2, \vx_3$ to denote generic points in $\Omega^*$. 
        The 3h3p terms permuted by $\mathcal{P}$ are of the same class as the unpermuted ones and thus not listed. 
        The intermediate block underlying each product indicates the origin of this product in \cref{eqn:amplitude_ccd_tdl}. 
        }\label{tab:class_quadratic_term}
        \begin{tabular}{cc}
                \toprule
                Singular Points 
                & Quadratic terms  \\
                \midrule
                None & $\underbrace{\braket{LK|DC}t_{IL}^{AD}}_{\chi_{IC}^{AK}}t_{KJ}^{CB}$\\
                $\vk_1 = \vx_1$  &$\underbrace{\braket{LK|CD}t_{IL}^{AD}}_{\chi_{IC}^{AK}}t_{KJ}^{CB}$,  $\underbrace{\braket{LK|DC}t_{IL}^{DA}}_{\chi_{IC}^{AK}}t_{KJ}^{CB}$, $\underbrace{\braket{LK|DC}t_{IL}^{AD}}_{\chi_{IC}^{AK}}t_{KJ}^{BC}$
                                $\underbrace{\braket{KL|CD}t_{KL}^{AD}}_{\kappa_C^A}t_{IJ}^{CB}$,     $\underbrace{\braket{KL|CD} t_{IL}^{CD}}_{\kappa_I^K}t_{KJ}^{AB}$\\
                $\vk_1 = \vx_1$, $\vk_2 = \vx_2$& $\underbrace{\braket{LK|CD}t_{IL}^{DA}}_{\chi_{CI}^{AK}}t_{KJ}^{CB}$, $\underbrace{\braket{LK|CD}t_{IL}^{AD}}_{\chi_{IC}^{AK}}t_{KJ}^{BC}$, $\underbrace{\braket{KL|DC}t_{KL}^{AD}}_{\kappa_C^A}t_{IJ}^{CB}$,     $\underbrace{\braket{KL|DC} t_{IL}^{CD}}_{\kappa_I^K}t_{KJ}^{AB}$\\ 
                $\vk_1 = \vx_1$, $\vk_2 = \vx_2$, $\vk_1 \pm \vk_2 = \vx_3$ & $\underbrace{\braket{KL|CD}t_{IJ}^{CD}}_{\chi_{IJ}^{AB}}t_{KL}^{AB}$, $\underbrace{\braket{LK|DC} t_{IL}^{DA}}_{\chi_{IC}^{AK}}t_{KJ}^{BC}$,            $\underbrace{\braket{LK|CD}t_{JL}^{DA}}_{\chi_{CJ}^{AK}}t_{KI}^{BC}$\\
                \bottomrule
        \end{tabular}
\end{table}

For quadratic terms of the first and second class, their quadrature errors can be estimated using \cref{lem:quaderror0} and \cref{lem:quaderror1} in a similar way 
as for the linear terms above. 
In the following, we demonstrate the quadrature error estimate for the third and forth classes of quadratic terms. 

For the third class, we consider the 3h3p quadratic term $\braket{LK|CD}t_{IL}^{AD}t_{KJ}^{BC}$ and denote the integrand 
for each set of $(k,l,c,d)$ as 
\[
       F_\text{3h3p}^{klcd}(\vk_k, \vk_l) = \braket{l\vk_l, k\vk_k | c\vk_c, d\vk_d} t_{ilad}(\vk_i, \vk_l, \vk_a) t_{kjbc}(\vk_k, \vk_j, \vk_b),
\]
where $\vk_c = \vk_k + \vk_j - \vk_b$ and $\vk_d = \vk_l  + \vk_i  - \vk_a$. 
The ERI and the two amplitudes have momentum transfers as $\vk_c - \vk_l = \vk_k - \vk_l + \vk_j-\vk_b$, $\vk_a - \vk_i$, and $\vk_b - \vk_k$, respectively.
To single out the nonsmoothness with respect to $\vk_k$ and $\vk_l$, we introduce two changes of variables $\vk_k \rightarrow \vk_b - \vq_2$ and 
$\vk_l \rightarrow \vk_k + \vk_j - \vk_b + \vq_1$ and this term calculation can be formulated using the integrand periodicity as 
\[
        \dfrac{1}{N_\vk^2}\sum_{\vq_1,\vq_2\in\mathcal{K}_\vq}\sum_{klcd}F_\text{3h3p}^{klcd}(\vq_1, \vq_2) \longrightarrow \dfrac{1}{|\Omega^*|^2}\int_{\Omega^*\times\Omega^*}\ud\vq_1\ud\vq_2 \sum_{klcd}F_\text{3h3p}^{klcd}(\vq_1, \vq_2),
\]
where the integrand is smooth everywhere except at $\vq_1 = \bm{0}$ and $\vq_2 = \bm{0}$.
This explains the classification of this term as the third class listed in \cref{tab:class_quadratic_term}.
Note that the first amplitude in $F_\text{3h3p}^{klcd}$ is smooth with respect to $\vq_1, \vq_2$
and thus $F_\text{3h3p}^{klcd}$ can be written in a product form $f_1(\vq_1, \vq_2)f_2(\vq_1, \vq_2)$ where $f_s(\vq_1, \vq_2)$ with $s=1,2$ is smooth everywhere except at $\vq_s = \bm{0}$
with order $0$. 
\cref{lem:quaderror3} provides the quadrature error estimate for bivariate functions in such a product form, and specifically in this case we have
\[
        \left|
                \mathcal{E}_{\Omega^*\times\Omega^*}\left(
                        \sum_{klcd}F_\text{3h3p}^{klcd}(\vq_1, \vq_2), \mathcal{K}_\vq\times\mathcal{K}_\vq
                \right)
        \right|         
        \leqslant C N_\vk^{-1},
\]
where constant $C$ can be shown independent of $\vk_i,\vk_j,\vk_a$ using the prefactor estimate in \cref{lem:quaderror3} and the 
algebraic singularity characterization of ERIs and amplitudes (see \cref{rem:quaderror3}).

For the forth class, we consider the 4h2p quadratic term $\braket{KL|CD}t_{IJ}^{CD}t_{KL}^{AB}$ and denote the integrand with each 
set of $(k,l,c,d)$ with the change of variable $\vk_k \rightarrow \vk_c + \vq_1$ and 
$\vk_c \rightarrow \vk_i + \vq_2$ as 
\[
        F_\text{4h2p}^{klcd}(\vq_1, \vq_2) = 
        \braket{k\vk_k, l\vk_l | c\vk_c, d\vk_d} t_{ijcd}(\vk_i, \vk_j, \vk_c)t_{klab}(\vk_k, \vk_l, \vk_a),
\]
where the ERI is smooth everywhere except at $\vq_1 = \bm{0}$ with order $0$,
the two amplitudes are smooth everywhere except at $\vq_2 = \bm{0}$ and $\vk_a - \vk_i = \vq_2 - \vq_1$, respectively, with order $0$. 
This explains the classification of this term as the forth class listed in \cref{tab:class_quadratic_term}. 
\cref{lem:quaderror4} provides the quadrature error estimate for bivariate functions in such a product form, and specifically in this case we have
\[
        \left|
                \mathcal{E}_{\Omega^*\times\Omega^*}\left(
                        \sum_{klcd}F_\text{4h2p}^{klcd}(\vq_1, \vq_2), \mathcal{K}_\vq\times\mathcal{K}_\vq
                \right)
        \right|         
        \leqslant C N_\vk^{-1},
\]
where constant $C$ can be shown independent of $\vk_i,\vk_j,\vk_a$ using the prefactor estimate in \cref{lem:quaderror4} and the 
algebraic singularity characterization of ERIs and amplitudes (see \cref{rem:quaderror4}).

Collecting all the above quadrature error estimates for linear and quadratic terms (see \cref{tab:amplitude_error} for a summary), 
we obtain the final error estimate in amplitude calculation
\[
        \left|
        \left[
                \mathcal{M}_{\mathcal{K}}\mathcal{F}_\text{TDL}(t) - \mathcal{F}_{N_\vk}(\mathcal{M}_{\mathcal{K}}t) 
        \right]_{ijab, \vk_i\vk_j\vk_a}
        \right|
        \leqslant 
        C N_\vk^{-\frac13}, 
        \qquad
        \forall i,j,a,b, \forall \vk_i,\vk_j,\vk_a\in\mathcal{K}.
\]
It is worth pointing out that the dominant error comes from the following six linear amplitude terms, 
\[
\sum_{KL}\braket{KL|IJ}t_{KL}^{AB},\quad \sum_{CD}\braket{AB|CD}t_{IJ}^{CD},\quad \mathcal{P}\sum_{KC}\braket{AK|CJ}t_{IK}^{CB},\quad \mathcal{P}\sum_{KC}\braket{AK|CI}t_{KJ}^{BC},        
\]
where the involved ERIs can have matching band indices and thus are nonsmooth at zero momentum transfer points with order $-2$.

\subsection{Proof of \cref{lem:thm1_amplitude_prev}: Error accumulation in the CCD iteration}
Fixing a set of $i,j,a,b$ and $\vk_i,\vk_j,\vk_a$, we focus on one constant, one linear, and one quadratic terms in the entry 
$\left[\mathcal{F}_{N_\vk}(T)\right]_{ijab,\vk_i\vk_j\vk_a}$ detailed as follows
\begin{align}
        & 
        \left[\mathcal{F}_{N_\vk}(T)\right]_{ijab,\vk_i\vk_j\vk_a}
        = 
        \dfrac{1}{\varepsilon_{i\vk_i, j\vk_j}^{a\vk_a, b\vk_b}}\braket{a\vk_a, b\vk_b | i\vk_i, j\vk_j}
        \label{eqn:amplitude_map_expansion_finite}\\
        & 
        \qquad + 
        \dfrac{1}{\varepsilon_{i\vk_i, j\vk_j}^{a\vk_a, b\vk_b}}
        \dfrac{1}{N_\vk}\sum_{\vk_k \in \mathcal{K}}\sum_{kl} \braket{k\vk_k, l\vk_l | i\vk_i, j\vk_j} T_{klab}(\vk_k, \vk_l, \vk_a)
        \nonumber\\
        & 
        \qquad + 
        \dfrac{1}{\varepsilon_{i\vk_i, j\vk_j}^{a\vk_a, b\vk_b}}
        \dfrac{1}{N_\vk^2}\sum_{\vk_k\vk_c \in \mathcal{K}} \sum_{klcd} \braket{k\vk_k, l\vk_l | c\vk_c, d\vk_d} 
        T_{ijcd}(\vk_i, \vk_j, \vk_c)T_{klab}(\vk_k, \vk_l, \vk_a)
        \nonumber \\
        & \qquad + \cdots
        \nonumber,
\end{align}
where the linear and quadratic terms come from the 4h2p term $\chi_{IJ}^{KL}t_{KL}^{AB}$ in \cref{eqn:amplitude_ccd}
and the neglected terms above are the other linear and quadratic terms included in \cref{eqn:amplitude_ccd}.

In the subtraction $\mathcal{F}_{N_\vk}(T) - \mathcal{F}_{N_\vk}(S)$, the constant terms above in these two maps cancel each other.   
The subtraction between the two 4h2p linear terms can be formulated and bounded as 
\begin{align*}
        & 
        \left|
        \dfrac{1}{\varepsilon_{i\vk_i, j\vk_j}^{a\vk_a, b\vk_b}}
        \dfrac{1}{N_\vk}\sum_{\vk_k \in \mathcal{K}}\sum_{kl} \braket{k\vk_k, l\vk_l | i\vk_i, j\vk_j} 
        \left(
        T_{klab}(\vk_k, \vk_l, \vk_a)
        - 
        S_{klab}(\vk_k, \vk_l, \vk_a)
        \right)
        \right|
        \\ 
        \leqslant 
        & 
        C
        \dfrac{1}{N_\vk}\sum_{\vk_k \in \mathcal{K}}\sum_{kl} 
        \left|
        \braket{k\vk_k, l\vk_l | i\vk_i, j\vk_j} 
        \right|
        \| T - S \|_\infty
        \\
        \leqslant 
        & 
        C
        \| T - S \|_\infty
        \int_{\Omega^*}\ud \vk_k
        \sum_{kl}
        \left|
        \braket{k\vk_k, l\vk_l | i\vk_i, j\vk_j} 
        \right|
        \leqslant 
        C
        \| T - S \|_\infty.
\end{align*}
Similar estimate can be obtained for all the other linear terms in the amplitude map $\mathcal{F}_{N_\vk}$. 
The subtraction between the two 4h2p quadratic terms can be formulated and bounded as 
\begin{align*}
        & 
        \left|
        \dfrac{1}{\varepsilon_{i\vk_i, j\vk_j}^{a\vk_a, b\vk_b}}
        \dfrac{1}{N_\vk^2}\sum_{\vk_k\vk_c \in \mathcal{K}} \sum_{klcd} \braket{k\vk_k, l\vk_l | c\vk_c, d\vk_d} 
        \left(
        T_{ijcd}(\vk_i, \vk_j, \vk_c)T_{klab}(\vk_k, \vk_l, \vk_a)
        -
        S_{ijcd}(\vk_i, \vk_j, \vk_c)S_{klab}(\vk_k, \vk_l, \vk_a)
        \right)
        \right|
        \\
        \leqslant & 
        C
        \int_{\Omega^*\times\Omega^*}\ud\vk_k\ud\vk_c
        \sum_{klcd} |\braket{k\vk_k, l\vk_l | c\vk_c, d\vk_d}|
        \left(
                \|T\|_\infty + \|S\|_\infty
        \right)
        \|T - S\|_\infty
        \\
        \leqslant & 
        C
        \left(
                \|T\|_\infty + \|S\|_\infty
        \right)
        \|T - S\|_\infty,
\end{align*}
where the first inequality uses the estimate
\begin{align*}
        & 
        \left|
        T_{ijcd}(\vk_i, \vk_j, \vk_c)T_{klab}(\vk_k, \vk_l, \vk_a)
        -
        S_{ijcd}(\vk_i, \vk_j, \vk_c)S_{klab}(\vk_k, \vk_l, \vk_a)
        \right|
        \\
        \leqslant 
        & 
        \left|
        T_{ijcd}(\vk_i, \vk_j, \vk_c)
        \left(
                T_{klab}(\vk_k, \vk_l, \vk_a)
                - 
                S_{klab}(\vk_k, \vk_l, \vk_a)
        \right)
        \right|
        + 
        \left|
        \left(
                T_{ijcd}(\vk_i, \vk_j, \vk_c)
                -
                S_{ijcd}(\vk_i, \vk_j, \vk_c)
        \right)
                S_{klab}(\vk_k, \vk_l, \vk_a)
        \right|
        \\
        \leqslant 
        & 
        \|T\|_\infty \|T - S\|_\infty + 
        \|S\|_\infty \|T - S\|_\infty.
\end{align*}
Similar estimate can be obtained for all the other quadratic terms in the amplitude map $\mathcal{F}_{N_\vk}$. 
Collecting all these estimates together, we have 
\begin{align*}
        \|\mathcal{F}_{N_\vk}(T) - \mathcal{F}_{N_\vk}(S)\|_\infty
        & = 
        \max_{ijab,\vk_i\vk_j\vk_a\in\mathcal{K}}
        \left|
        \left[\mathcal{F}_{N_\vk}(T) - \mathcal{F}_{N_\vk}(S)\right]_{ijab,\vk_i\vk_j\vk_a}
        \right|
        \\
        & \leqslant
        C \left(1 + \|T\|_\infty + \|S\|_\infty\right)
        \|T - S\|_\infty.
\end{align*}

\section{Proof of Corollary \ref{thm:error_ccd_converge}}

As discussed in \cref{sec:discuss}, it is possible in general that the CCD amplitude equation may have multiple solutions or its fixed point iteration may diverge. 
In these cases, the finite size error in CCD energy calculation can be ill-defined and not connected to CCD($n$) we have analyzed. 
Here, we consider the ideal case where $T = \mathcal{F}_{N_\vk}(T)$ for any sufficiently 
large $N_\vk$ and $t = \mathcal{F}_\text{TDL}(t)$ both have unique solutions, denoted as $T_*^{N_\vk}$ and $t_*$, and the corresponding fixed point iterations converge in the sense of the 
$\|\cdot\|_\infty$-norm, i.e., 
\[
        \lim_{n\rightarrow \infty} \|T_n - T_*^{N_\vk}\|_\infty = 0 \qquad \text{and} \qquad 
        \lim_{n\rightarrow \infty} \|t_n - t_*\|_\infty = 0.
\] 
In general, a common sufficient condition that guarantees the convergence of a fixed point iteration is that the target mapping is contractive 
(to be specified later) in a domain that contains the solution point and the initial guess also lies in this domain.
Following this practice, we make four assumptions:
\begin{itemize}
        \item  $\mathcal{F}_\text{TDL}$ is a contraction map in a domain 
$\mathbb{B}_\text{TDL}\subset \mathbb{T}(\Omega^*)^{n_\text{occ}\times n_\text{occ} \times n_\text{vir} \times n_\text{vir}}$ that contains $t_*$ and the initial guess $\bm{0}$, i.e., 
\begin{align*}
        & \mathcal{F}_\text{TDL}(t) \in \mathbb{B}_\text{TDL},\quad \forall t \in \mathbb{B}_\text{TDL}, 
        \\
        & \|\mathcal{F}_\text{TDL}(t) - \mathcal{F}_\text{TDL}(s)\|_\infty \leqslant L \|t - s\|_\infty,\quad  \forall t,s \in \mathbb{B}_\text{TDL},
\end{align*}
with a constant $L < 1$.
This assumption guarantees that $\{t_n\}$ lies in $\mathbb{B}_\text{TDL}$ and converges to $t_*$. 

        \item 
$\mathcal{F}_{N_\vk}$ with sufficiently large $N_\vk$ is a contraction map in a domain 
$\mathbb{B}_{N_\vk}\subset \mathbb{C}^{n_\text{occ}\times n_\text{occ} \times n_\text{vir} \times n_\text{vir}\times N_\vk\times N_\vk \times N_\vk}$ that 
contains $T_*^{N_\vk}$ and the initial guess $\bm{0}$, i.e., 
\begin{align*}
        & \mathcal{F}_{N_\vk}(T)  \in \mathbb{B}_{N_\vk}, \quad \forall T \in \mathbb{B}_{N_\vk},
        \\
        & \|\mathcal{F}_{N_\vk}(T) - \mathcal{F}_{N_\vk}(S)\|_\infty \leqslant L \|T - S\|_\infty,\quad  \forall T,S \in \mathbb{B}_{N_\vk},
\end{align*}
with a constant $L < 1$. 
This assumption guarantees that $\{T_n^{N_\vk}\}$ lies in $\mathbb{B}_{N_\vk}$ and converges to $T_*^{N_\vk}$. 

        \item For sufficiently large $N_\vk$, the domains in the above two assumptions satisfy that 
        \begin{equation}\label{eqn:assumption_ccd_3}
                \mathcal{M}_\mathcal{K}\mathbb{B}_\text{TDL}
                := 
                \{
                \mathcal{M}_\mathcal{K}t: t\in \mathbb{B}_\text{TDL}         
                \}
                \subset 
                \mathbb{B}_{N_\vk}.
        \end{equation}
        \cref{thm:error_ccd} proves that for each fixed $n$ the finite-size amplitude $T_n^{N_\vk}$ converges to $t_n$ in the sense of
        \[
                \lim_{N_\vk\rightarrow\infty} \|\mathcal{M}_\mathcal{K}t_n - T_n^{N_\vk}\|_\infty  = 0,
        \]
        showing that $\{T_n^{N_\vk}\} \subset \mathbb{B}_{N_\vk}$ converges to $\{t_n\} \subset \mathbb{B}_\text{TDL}$
        with $\mathcal{K} \rightarrow \Omega^*$. 
        Intuitively, this argument suggests certain closeness between $\mathbb{B}_{N_\vk}$ and $\mathcal{M}_\mathcal{K}\mathbb{B}_\text{TDL}$
        which leads to the assumption here. 

        \item Note that the error estimate in \cref{lem:thm1_amplitude_exact} has prefactor $C$ dependent on the amplitude $t$. 
        For the whole set of iterates $\{t_n\}$, we make a stronger assumption that there exists a constant $C$ such that 
        \begin{equation}\label{eqn:assumption_ccd_4}
                \left\|
                        \mathcal{M}_{\mathcal{K}}\mathcal{F}_\text{TDL}(t_{n}) - 
                        \mathcal{F}_{N_\vk}(\mathcal{M}_{\mathcal{K}}t_{n})
                \right\|_\infty
                \leqslant C N_\vk^{-\frac13}, 
                \quad 
                \forall n > 0.
        \end{equation}
\end{itemize}

Under these assumptions, the finite-size error in the CCD energy calculation can be estimated as 
\begin{align}
        \left|
                E_{\text{CCD}}^\text{TDL} - E_{\text{CCD}}^{N_\vk}
        \right|
        & = 
        \left|
                \mathcal{G}_\text{TDL}(t_*) - \mathcal{G}_{N_\vk}(T_*^{N_\vk})
        \right|
        \nonumber\\
        & \leqslant 
        \left|
                \mathcal{G}_{N_\vk}(\mathcal{M}_{\mathcal{K}}t_*) - \mathcal{G}_{N_\vk}(T_*^{N_\vk})
        \right|
        +
        \left|
                \mathcal{G}_\text{TDL}(t_*) - \mathcal{G}_{N_\vk}(\mathcal{M}_{\mathcal{K}}t_*)
        \right|
        \nonumber\\
        & \leqslant 
        C \left\|\mathcal{M}_{\mathcal{K}}t_* - T_*^{N_\vk}\right\|_\infty
        +
        C N_\vk^{-1}
        \label{eqn:error_ccd_converge},
\end{align}
where the last inequality uses the boundedness of linear operator $\mathcal{G}_{N_\vk}$ and \cref{lem:thm1_energy}. 
To estimate the finite-size error in the converged amplitudes, $\left\|\mathcal{M}_{\mathcal{K}}t_* - T_*^{N_\vk}\right\|_\infty$, 
we consider the error splitting \cref{eqn:amplitude_splitting} for the amplitude calculation at the $n$-th fixed point iteration as
\begin{align*}
        \left\|
                \mathcal{M}_{\mathcal{K}}t_{n} - T_{n}^{N_\vk}
        \right\|_\infty
        & \leqslant 
        \left\|
                \mathcal{M}_{\mathcal{K}}\mathcal{F}_\text{TDL}(t_{n-1}) - 
                \mathcal{F}_{N_\vk}(\mathcal{M}_{\mathcal{K}}t_{n-1})
        \right\|_\infty
        +
        \left\|
                \mathcal{F}_{N_\vk}(T_{n-1}^{N_\vk}) - \mathcal{F}_{N_\vk}( \mathcal{M}_{\mathcal{K}}t_{n-1})
        \right\|_\infty
        \\
        & \leqslant 
        C
        N_\vk^{-\frac13}
        + 
        L  
        \left\|
                \mathcal{M}_\mathcal{K}t_{n-1} - T_{n-1}^{N_\vk}
        \right\|_\infty,
\end{align*}
where the last estimate uses the assumption in \cref{eqn:assumption_ccd_4} for the first term and the 
assumptions that $\mathcal{F}_{N_\vk}$ is a contraction map and  $\mathcal{M}_\mathcal{K}t_{n-1}\in \mathbb{B}_{N_\vk}$  for the second term.
Since the initial guesses in the finite and the TDL cases satisfy $\|\mathcal{M}_\mathcal{K}t_0 - T_0^{N_\vk}\|_\infty = 0$, 
we can recursively derive that 
\[
        \left\|
                \mathcal{M}_{\mathcal{K}}t_{n} - T_{n}^{N_\vk}
        \right\|_\infty
        \leqslant 
        C \dfrac{1 - L^n}{1 - L} N_\vk^{-\frac13},
\]
and thus 
\[
        \left\|
                \mathcal{M}_{\mathcal{K}}t_{*} - T_{*}^{N_\vk}
        \right\|_\infty
        = \lim_{n\rightarrow\infty}
        \left\|
                \mathcal{M}_{\mathcal{K}}t_{n} - T_{n}^{N_\vk}
        \right\|_\infty
        \leqslant 
        C N_\vk^{-\frac13}.
\]
Plugging this estimate into \cref{eqn:error_ccd_converge} then finishes the proof.

\section{Quadrature error estimate for periodic function with algebraic singularity}\label{app:quadrature_error}

This section consists of five lemmas that provide the quadrature error estimates for trapezoidal rules over integrands in five different classes 
as listed in \cref{tab:class_integral}. 
All the integrands are either smooth or built by univariate/multivariate functions that have algebraic singularity at one single point.
In addition to the asymptotic scaling of the quadrature errors, our finite-size error analysis also needs quantitative descriptions about the relation 
between prefactors in the estimate  and the smoothness properties of the integrand.

For a univariate function $f(\vx)$ that is smooth everywhere in $V$ except at $\vx = \vx_0$ with algebraic singularity of order $\gamma$, 
we define a constant  
\begin{align}
        \mathcal{H}_{V, \vx_0}^{l}(f) 
& = \min
\left\{
        C: 
        \left|
                \partial^\valpha_\vx f(\vx)
        \right|
        \leqslant 
        C 
        |\vx - \vx_0|^{\gamma - |\valpha|}, 
        \forall |\valpha|\leqslant l, 
        \forall \vx \in V\setminus\{\vx_0\}
\right\}
\nonumber\\
& = 
        \max_{|\valpha|\leqslant l} 
        \left\| 
                \left(\partial^\valpha_\vx f(\vx)\right) / |\vx - \vx_0|^{\gamma - |\valpha|}
        \right\|_{L^{\infty}(V)}.
        \label{eqn:fractional_univariate}
\end{align}
For a multivariate function $f(\vx, \vy)$ that is smooth everywhere in $V_X\times V_Y$ except at $\vx = \vx_0$ with algebraic singularity of order $\gamma$,
we define a constant 
\begin{align}
\mathcal{H}_{V_X\times V_Y, (\vx_0,\cdot)}^{l}(f) 
& = \min
\left\{
        C: 
        \left|
                \partial^\valpha_\vx \partial_\vy^\vbeta f(\vx, \vy)
        \right|
        \leqslant 
        C 
        |\vx - \vx_0|^{\gamma - |\valpha|}, 
        \forall |\valpha|, |\vbeta|\leqslant l, 
        \forall \vx \in V_X\setminus\{\vx_0\}, \vy\in V_Y
\right\}
\nonumber\\
& = 
        \max_{|\valpha|\leqslant l} 
        \left\| 
                \left(\partial^\valpha_\vx \partial_\vy^\vbeta f(\vx, \vy)\right)
                / |\vx - \vx_0|^{\gamma - |\valpha|}
        \right\|_{L^{\infty}(V\times V)},
        \label{eqn:fractional_multivariate}
\end{align}
where ``$\cdot$'' in the subscript ``$(\vx_0, \cdot)$'' is a placeholder to indicate the smooth variable.
Using these two quantities, we have following function estimates that will be extensively used in this section 
\begin{align}
        & 
        \left|
                \partial^\valpha_\vx f(\vx)
        \right|
        \leqslant  
        \mathcal{H}_{V, \vx_0}^{l}(f) |\vx - \vx_0|^{\gamma - |\valpha|}, 
        \qquad
        \forall l \geqslant |\valpha|,\ \forall \vx \in V\setminus\{\vx_0\},
        \label{eqn:fractional_univariate_ineq}
        \\
        & 
        \left|
                \partial^\valpha_\vx \partial^\vbeta_\vy f(\vx, \vy)
        \right|
        \leqslant  
        \mathcal{H}_{V_X\times V_Y, (\vx_0, \cdot)}^{l}(f) |\vx - \vx_0|^{\gamma - |\valpha|}, 
        \qquad
        \forall l \geqslant |\valpha|, |\vbeta|,\ \forall \vx \in V_X\setminus\{\vx_0\},\vy\in V_Y.
        \label{eqn:fractional_multivariate_ineq}
\end{align}

The following lemma is the standard result that the quadrature error of a trapezoidal rule applied to smooth and periodic functions decays super-algebraically. 
For completeness, we include a proof using the Poisson summation formula.
\begin{lem}\label{lem:quaderror0}
        Let $f(\vx)$ be smooth and periodic in $V = [-\frac12, \frac12]^d$. 
        The quadrature error of a trapezoidal rule using an $m^d$-sized uniform mesh $\mathcal{X}$ in $V$ decays super-algebraically as
        \[
                \left|\mathcal{E}_V(f, \mathcal{X})\right| \leqslant C_l m^{-l},\quad \forall l > 0.
        \]
\end{lem}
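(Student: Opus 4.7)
The plan is to derive the quadrature error via a Fourier/Poisson summation identity, reducing the super-algebraic decay claim to the well-known super-algebraic decay of Fourier coefficients of smooth periodic functions. Write $f$ as its Fourier series $f(\vx)=\sum_{\vn\in\ZZ^d}\hat f(\vn)e^{2\pi\I\vn\cdot\vx}$ on $V=[-\tfrac12,\tfrac12]^d$, so that the exact integral picks out only the zero mode, $\mathcal{I}_V(f)=\hat f(\bm{0})$. Parametrize the uniform $m^d$-sized mesh as $\mathcal{X}=\{\vx_0+\vj/m:\vj\in\{0,\dots,m-1\}^d\}$ for some offset $\vx_0\in V$.

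Next, substitute the Fourier expansion into $\mathcal{Q}_V(f,\mathcal{X})=m^{-d}\sum_{\vx\in\mathcal{X}}f(\vx)$ and evaluate the inner geometric sum over $\vj$. The orthogonality identity $m^{-d}\sum_{\vj}e^{2\pi\I\vn\cdot\vj/m}=\mathbf{1}[\vn\in m\ZZ^d]$ (Poisson summation on the finite lattice) collapses the series to
\[
\mathcal{Q}_V(f,\mathcal{X})=\sum_{\vn\in m\ZZ^d}\hat f(\vn)\,e^{2\pi\I\vn\cdot\vx_0},
\]
from which the quadrature error takes the clean aliasing form
\[
\mathcal{E}_V(f,\mathcal{X})=-\sum_{\vn\in m\ZZ^d\setminus\{\bm 0\}}\hat f(\vn)\,e^{2\pi\I\vn\cdot\vx_0}.
\]

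Then invoke the standard fact that for smooth periodic $f$, integration by parts gives $|\hat f(\vn)|\leqslant C'_l(1+|\vn|)^{-l}$ for any $l>0$, with $C'_l$ bounded by a finite combination of $L^\infty$-norms of derivatives of $f$ of order up to $l$. Bounding the tail by
\[
|\mathcal{E}_V(f,\mathcal{X})|\leqslant C'_l\sum_{\vn\in m\ZZ^d\setminus\{\bm 0\}}(1+|\vn|)^{-l}\leqslant C_l\,m^{-(l-d-1)},
\]
valid once $l>d$, and then relabeling $l$, yields the claimed super-algebraic rate $|\mathcal{E}_V(f,\mathcal{X})|\leqslant C_l m^{-l}$ for every $l>0$.

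There is no real obstacle here since every step is routine; the only item worth being careful about is the bookkeeping of the prefactor, because (as \cref{rem:quaderror0} will be used elsewhere) we need $C_l$ to be controlled by a finite number of sup-norms of derivatives of $f$. This comes for free from the integration-by-parts estimate of $\hat f(\vn)$, so the dependence on $f$ enters only through $\max_{|\valpha|\leqslant l+d+1}\|\partial^{\valpha}f\|_{L^\infty(V)}$, which is exactly what later applications of the lemma will need to make the constants uniform in auxiliary parameters (e.g.\ in $\vk_i,\vk_j,\vk_a$).
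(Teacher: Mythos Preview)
Your proposal is correct and follows essentially the same route as the paper: express the quadrature error via the aliasing/Poisson summation identity and then invoke super-algebraic decay of the Fourier coefficients of a smooth periodic function obtained by integration by parts. Your additional remark that the constant $C_l$ depends only on finitely many $\|\partial^\valpha f\|_{L^\infty(V)}$ is exactly the content of \cref{rem:quaderror0} in the paper.
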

\begin{proof}
        Denote the Fourier transform of $f(\vx)$ as 
        \begin{equation*}
                f(\vx) = \frac{1}{|V|}\sum_{\vk \in 2\pi\mathbb{Z}^d}\hat{f}(\vk) e^{\I\vk\cdot\vx} \quad \text{with} \quad \hat{f}(\vk) = \int_{V}f(\vx)e^{-\I \vk\cdot\vx}\ud\vx.
        \end{equation*}
        Let $\mathcal{X}_0$ be the $m^d$-sized uniform mesh in $V$ that contains $\vx = \bm{0}$ and let $\mathcal{X} = \mathcal{X}_0 + \vx_0$.
        Denote the unit length $h = 1/m$. 
        Using these notations, we have
        \begin{align*}
                \mathcal{I}_V(f) & = \hat{f}(\bm{0}), \\
                \mathcal{Q}_V(f, \mathcal{X})
                & = \frac{|V|}{m^d}\sum_{\vx \in \mathcal{X}_0} f(\vx + \vx_0)
                = \sum_{\vk\in 2\pi\mathbb{Z}^d}\hat{f}(\vk) e^{\I\vk\cdot\vx_0}\frac{1}{m^d}\sum_{\vx\in\mathcal{X}_0}e^{\I\vk\cdot\vx}
                = \sum_{\vk\in 2\pi \mathbb{Z}^d}\hat{f}\left(\frac{\vk}{h}\right) e^{\I\frac{\vk}{h}\cdot\vx_0},
        \end{align*}
        and thus the quadrature error of the trapezoidal rule using $\mathcal{X}$ can be estimated as 
        \begin{equation}\label{eqn:quaderror0_fourier}
                \left|\mathcal{E}_V(f, \mathcal{X})\right|
                \leqslant 
                \sum_{\vk\in 2\pi \mathbb{Z}^d \setminus \{\bm{0}\}} \left|\hat{f}\left(\frac{\vk}{h}\right)\right|.
        \end{equation}
        Based on the periodicity and smoothness of $f(\vx)$ in $V$, we can use integration by part to estimate the Fourier transform coefficient 
        as 
        \begin{align}
                \left|\hat{f}\left(\dfrac{\vk}{h}\right)\right| 
                & =  
                \left|
                \int_{V}\ud\vx f(\vx) e^{-\I \frac{\vk}{h}\cdot\vx} 
                \right|
                \nonumber
                \\
                & = 
                \dfrac{h^{|\valpha|}}{|\vk|^{\valpha}}
                \left|\int_{V}\ud\vx \dfrac{\partial^\valpha}{\partial \vx^\valpha}f(\vx)e^{-\I \frac{\vk}{h}\cdot\vx}\right|
                \nonumber
                \\
                & \leqslant 
                \dfrac{h^{|\valpha|}}{|\vk|^{\valpha}}
                \int_{V}\ud\vx \left|\dfrac{\partial^\valpha}{\partial \vx^\valpha}f(\vx)\right|
                = C_\valpha \frac{h^{|\valpha|}}{|\vk|^{\valpha}},
                \label{eqn:quaderror0_fourier_est}
        \end{align} 
        with any derivative order $\valpha \geqslant 0$ where $|\valpha| = \sum_i \valpha_i$ and $|\vk|^\valpha = \prod_i |\vk_i|^{\valpha_i}$. 
        Plugging this estimate into \cref{eqn:quaderror0_fourier}, we obtain 
        \begin{equation*}
                \left|\mathcal{E}_V(f, \mathcal{X})\right|
                \leqslant 
                C_\valpha h^{|\valpha|}
                \sum_{\vk\in 2\pi \mathbb{Z}^d \setminus \{\bm{0}\}} 
                \frac{1}{|\vk|^{\valpha}},
        \end{equation*}
        which then proves the lemma by choosing an arbitrary $\valpha$ with $|\valpha| = l > d$.
\end{proof}

\begin{rem}\label{rem:quaderror0}
        If we replace $f(\vx)$ by $f(\vx, \vy)$ defined in $V\times V_Y$ which is smooth and periodic with respect to $\vx$ for each $\vy\in V_Y$ 
        and satisfies $\sup_{\vx\in V, \vy\in V_Y}|\partial_{\vx}^{\valpha}f(\vx, \vy)| < \infty$ for any $\valpha \geqslant 0$, \cref{lem:quaderror0} can be generalized
        as 
        \[
                \left|\mathcal{E}_V(f(\cdot, \vy), \mathcal{X})\right| \leqslant C_l m^{-l},\quad \forall l > 0, \forall \vy \in V_Y,
        \] 
        where constant $C_l$ is independent of $\vy \in V_Y$ based on the prefactor estimate in \cref{eqn:quaderror0_fourier_est}.
\end{rem}

\begin{lem}\label{lem:quaderror1}
        Let $f(\vx)$ be periodic with respect to $V = [-\frac12,\frac12]^d$ and smooth everywhere except at $\vx = \bm{0}$ with order $\gamma \geqslant -d+1$.     
        At $\vx = \bm{0}$, $f(\vx)$ is set to $0$.
        The quadrature error of a trapezoidal rule using an $m^d$-sized uniform mesh $\mathcal{X}$ that contains $\vx = \bm{0}$ can be estimated as 
        \[
                \left|\mathcal{E}_V(f, \mathcal{X})\right| \leqslant C \mathcal{H}_{V, \bm{0}}^{d+\max(1,\gamma)}(f) m^{-(d + \gamma)}.
        \]
        If $f(\bm{0})$ is set to an $\Or(1)$ value in the calculation,
        it introduces additional $\Or(m^{-d})$ quadrature error.

\end{lem}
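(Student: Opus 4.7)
My plan is to adapt the Poisson-summation argument from \cref{lem:quaderror0} to the singular setting. First, fix a smooth radial cutoff $\phi$ equal to $1$ in a neighborhood of $\vx=\bm{0}$ and supported strictly inside $V$, and split $f=\phi f+(1-\phi) f$. The complementary piece $(1-\phi)f$ extends to a smooth $V$-periodic function, so \cref{lem:quaderror0} bounds its quadrature error super-algebraically in $m$; this contribution is negligible against the target rate $m^{-(d+\gamma)}$.

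The substantive work is to control the singular piece $g:=\phi f$. Since $g$ is compactly supported inside $V$, its torus Fourier coefficients coincide with its Euclidean Fourier transform on $\RR^d$, and the convention $g(\bm{0})=0$ means the singular node contributes nothing to the trapezoidal sum. Repeating the Poisson-summation step in the proof of \cref{lem:quaderror0} yields
\[
\mathcal{E}_V(g,\mathcal{X}) \;=\; -\sum_{\vk\in 2\pi m\,\ZZ^d\setminus\{\bm{0}\}}\hat g(\vk).
\]
The core pointwise bound I would establish is $|\hat g(\vk)|\leqslant C\,\mathcal{H}^{d+\max(1,\gamma)}_{V,\bm{0}}(f)\,|\vk|^{-(d+\gamma)}$. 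I would prove this by splitting the defining integral at the length scale $|\vx|\sim 1/|\vk|$: on the inner ball the pointwise bound $|g(\vx)|\leqslant C|\vx|^\gamma$ integrates to $\Or(|\vk|^{-(d+\gamma)})$, and on the outer region, $d+\max(1,\gamma)$ integrations by parts together with $|\partial^\valpha g|\leqslant C|\vx|^{\gamma-|\valpha|}$ produce the same rate after balancing. Using a smooth radial partition of unity at scale $1/|\vk|$ rather than a sharp indicator eliminates boundary contributions.

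It remains to sum this bound over the alias lattice $\vk=2\pi m\vn$, $\vn\in\ZZ^d\setminus\{\bm{0}\}$. For $\gamma>0$ the series $\sum_{\vn\neq\bm{0}}|\vn|^{-(d+\gamma)}$ converges absolutely and the desired $\Or(m^{-(d+\gamma)})$ rate falls out immediately. The hard case---and the main obstacle I anticipate---is $\gamma\leqslant 0$, where this absolute sum diverges. The rate must then come from cancellation rather than absolute-value estimates: after the rescaling $\vk=2\pi m\vn$, the homogeneity $|\vk|^{-(d+\gamma)}$ extracts a clean factor $m^{-(d+\gamma)}$ from the lattice sum, and the residual sum of angular-dependent profiles $P(\vn/|\vn|)/|\vn|^{d+\gamma}$ is handled by a cancellation/subtraction argument in the spirit of the punctured trapezoidal-rule analysis of Izzo~2022, reinterpreted on the torus. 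Once this cancellation is in place one reads off the claimed bound with prefactor $\mathcal{H}^{d+\max(1,\gamma)}_{V,\bm{0}}(f)$. Finally, if $f(\bm{0})$ is redefined from $0$ to an $\Or(1)$ value, the trapezoidal rule picks up a single extra summand $h^d f(\bm{0})=\Or(m^{-d})$, which accounts for the stated additional $\Or(m^{-d})$ term.
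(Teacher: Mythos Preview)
Your initial localization via $\phi$ matches the paper, and your pointwise Fourier bound $|\hat g(\vk)|\leqslant C|\vk|^{-(d+\gamma)}$ is correct. The genuine gap is the alias summation for $\gamma\leqslant 0$. Your proposed ``cancellation/subtraction argument in the spirit of Izzo~2022'' is not a proof: $\hat g$ is not homogeneous (indeed $g$ is compactly supported), so one cannot simply factor out $m^{-(d+\gamma)}$ and declare the residual lattice sum $\sum_{\vn\neq\bm{0}} P(\vn/|\vn|)\,|\vn|^{-(d+\gamma)}$ controlled---that sum diverges at the same borderline rate, and any cancellation would have to be extracted from the \emph{precise} structure of $\hat g$, which you have not specified. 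Even the bare Poisson identity you invoke is delicate here, since for $\gamma\leqslant 0$ the right-hand side need not converge absolutely.

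The paper avoids all of this with one device you are missing: a \emph{second} cutoff $\psi_h$ at the mesh scale $h=1/m$. Because $\bm{0}\in\mathcal{X}$ and $f(\bm{0})=0$, the singular node contributes nothing, so $\mathcal{Q}_V(f,\mathcal{X})=\mathcal{Q}_V\bigl(f(1-\psi_h),\mathcal{X}\bigr)$ and hence
\[
\mathcal{E}_V(f,\mathcal{X})=\mathcal{I}_V(f\psi_h)+\mathcal{E}_V\bigl(f(1-\psi_h),\mathcal{X}\bigr).
\]
The first piece is bounded directly by $\int_{|\vx|\leqslant h}|\vx|^{\gamma}\,\ud\vx=\Or(h^{d+\gamma})$. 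For the second piece, $f(1-\psi_h)$ is now \emph{smooth}, so Poisson summation and $|\valpha|$-fold integration by parts apply without any convergence issues; the key estimate is $\int_V\bigl|\partial^{\valpha}\bigl(f(1-\psi_h)\bigr)\bigr|\leqslant C\,\mathcal{H}_{V,\bm{0}}^{|\valpha|}(f)\bigl(h^{d+\gamma-|\valpha|}+1\bigr)$, which after multiplying by $h^{|\valpha|}/|\vk|^{\valpha}$ and summing over $\vk\in 2\pi\ZZ^d\setminus\{\bm{0}\}$ with $|\valpha|=d+\max(1,\gamma)$ yields $\Or(h^{d+\gamma})$ by \emph{absolute} convergence. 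In a sentence: rather than chase the sharp $|\vk|$-decay of $\hat g$ and then confront a divergent alias sum, excise an $h$-ball around the singularity so that the regularized function admits arbitrarily fast $|\vk|$-decay, with the correct rate carried instead by the $h$-dependent constants.
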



\begin{proof}
        Define a cutoff function $\psi \in \mathbb{C}_{c}^{\infty}(\mathbb{R}^n)$ satisfying
        \[
                \psi(\mathbf{x})= \begin{cases}1, & |\mathbf{x}| \leqslant \frac{1}{2} \\ 0, & |\mathbf{x}| \geqslant 1\end{cases},
        \]
        and denote its scaling as $\psi_{L}(\vx) = \psi(\vx/L)$ that is compactly supported in $|\vx| \leqslant L$. 
        Let $h = 1/m$ be the unit length of the uniform mesh $\mathcal{X}$. 
        Using the cutoff function, we split $f(\vx)$ as 
        \[
                f(\vx) = f(\vx)\psi_\frac12(\vx) + f(\vx)(1- \psi_\frac12(\vx)),
        \]
        where the first term is compactly supported in $V$ and the second term is smooth in $V$ and satisfies the periodic boundary condition on $\partial V$. 
        Accordingly, the quadrature error can be split into 
        \[
        \mathcal{E}_V(f, \mathcal{X}) = \mathcal{E}_V(f\psi_\frac12, \mathcal{X}) + \mathcal{E}_V(f(1-\psi_\frac12), \mathcal{X}),
        \] 
        where the second error decays super-algebraically with respect to $m$ according to \cref{lem:quaderror0}.
        The problem is reduced to estimating the first quadrature error for a localized integrand $f\psi_\frac12$. 
        In the following discussion, we abuse the notation $f$ to denote the localized function $f\psi_\frac12$ and assume that $f$ is 
        compactly supported in $V$ and smooth everywhere except at $\bm{0}$ with order $\gamma$. 

        Since $f(\bm{0}) = 0$, the trapezoidal rule over $f(\vx)$ using $\mathcal{X}$ satisfies  
        \[
        \mathcal{Q}_V(f, \mathcal{X}) = \mathcal{Q}_V(f(1 - \psi_h), \mathcal{X}),
        \]
        and accordingly its quadrature error can be split as
        \begin{align}
        \mathcal{E}_V(f, \mathcal{X})
        & = 
        \mathcal{I}_V(f\psi_h) + \mathcal{I}_V(f(1 - \psi_h)) - 
        \mathcal{Q}_V(f(1 - \psi_h), \mathcal{X})         
        \nonumber       \\
        & =
        \mathcal{I}_V(f\psi_h) + \mathcal{E}_V(f(1 - \psi_h), \mathcal{X}).
        \label{eqn:error_split}
        \end{align}

        The first part in \cref{eqn:error_split} can be estimated as
        \begin{equation}\label{eqn:01}
                \left|\mathcal{I}_V(f\psi_h)\right| 
                \leqslant C\int_{|\vx|\leqslant h} |f(\vx)|\ud\vx
                \leqslant 
                C
                \mathcal{H}_{V,\bm{0}}^0(f)
                \int_{|\vx|\leqslant h} |\vx|^{\gamma}\ud\vx 
                \leqslant
                C
                \mathcal{H}_{V,\bm{0}}^0(f)
                h^{d+\gamma},
        \end{equation}
        using the algebraic singularity characterization \cref{eqn:fractional_univariate_ineq} for $f(\vx)$ at $\vx = \bm{0}$.
        The second part in \cref{eqn:error_split} can be reformulated using the Poisson summation formula as
        \begin{equation}\label{eqn:error_poisson}
                \mathcal{E}_V\left(f(1 - \psi_h), \mathcal{X}\right) 
                = - \sum_{\vk \in 2\pi\mathbb{Z}^d\setminus \{\bm{0}\}} \hat{f}_{\psi, h}\left(\dfrac{\vk}{h}\right),
        \end{equation}
        where $f_{\psi, h} = f(1 - \psi_h)$ and its Fourier transform can be estimated as 
        \begin{align*}
                \left|\hat{f}_{\psi, h}\left(\dfrac{\vk}{h}\right)\right| 
                & = 
                \dfrac{h^{|\valpha|}}{|\vk|^{\valpha}}
                \left|
                        \int_{\mathbb{R}^d}\ud\vx \dfrac{\partial^\valpha}{\partial \vx^\valpha}f(\vx)(1 - \psi_h(\vx)) e^{- \I\frac{\vk}{h}\cdot\vx}
                \right|
                \\
                & \leqslant
                \dfrac{h^{|\valpha|}}{|\vk|^{\valpha}}
                \int_{\mathbb{R}^d}\ud\vx \left|\dfrac{\partial^\valpha}{\partial \vx^\valpha}f(\vx)(1 - \psi_h(\vx))\right|,
                \nonumber
        \end{align*} 
        with any derivative order $\valpha \geqslant 0$. 
        The derivative in the last integral can be further expanded as
        \begin{align*}
                \dfrac{\partial^\valpha}{\partial \vx^\valpha}f(\vx)(1 - \psi_h(\vx))
                & = 
                \sum_{\valpha_1 + \valpha_2  = \valpha}{ \valpha \choose \valpha_1} \dfrac{\partial^{\valpha_1}}{\partial \vx^{\valpha_1}}(1- \psi_h(\vx))
                \dfrac{\partial^{\valpha_2}}{\partial \vx^{\valpha_2}}f(\vx).
        \end{align*}
        Using the locality of $1 - \psi_h(\vx)$ and $f(\vx)$ and the inequality 
        $|\partial^{\vbeta}_\vx f(\vx)|\leqslant \mathcal{H}_{V, \bm{0}}^{|\valpha|}(f) |\vx|^{\gamma - |\vbeta|}$ with any $\vbeta\leqslant\valpha$, 
        we can estimate this derivative as 
        \begin{equation*}\label{eqn:02}
                \left|\dfrac{\partial^\valpha}{\partial \vx^\valpha}f(\vx)(1 - \psi_h(\vx))\right| 
                \leqslant C 
                \mathcal{H}_{V, \bm{0}}^{|\valpha|}(f)
                \begin{cases}
                        0, & |\mathbf{x}| \leqslant \frac{1}{2}h \\ 
                        h^{\gamma - |\valpha|}, & \frac12 h\leqslant|\mathbf{x}| \leqslant h \\ 
                        |\mathbf{x}|^{\gamma-|\valpha|}, & h \leqslant|\mathbf{x}| \leqslant \frac12 \\ 
                        0, & |\mathbf{x}|> \frac12
                \end{cases}.    
        \end{equation*}
        Using this estimate, the associated integral can be bounded as
        \begin{align*}
                \int_{\mathbb{R}^d}\ud\vx \left|\dfrac{\partial^\valpha}{\partial \vx^\valpha}f(\vx)(1 - \psi_h(\vx))\right|
                & \leqslant
                C
                \mathcal{H}_{V, \bm{0}}^{|\valpha|}(f)
                \left(
                \int_{\frac12 h}^h h^{\gamma-|\valpha|}r^{d-1}\ud r + \int_{h}^{\frac12} r^{\gamma - |\valpha| + d - 1}\ud r
                \right)
                \\
                & \leqslant 
                C
                \mathcal{H}_{V, \bm{0}}^{|\valpha|}(f)
                (h^{\gamma+d-|\valpha|} + 1). 
        \end{align*}
        Plugging this estimate into \cref{eqn:error_poisson}, we obtain 
        \[
                \left|\mathcal{E}_V(f(1 - \psi_h), \mathcal{X})\right|
                \leqslant 
                C
                \mathcal{H}_{V, \bm{0}}^{|\valpha|}(f)
                \sum_{\vk \in 2\pi\mathbb{Z}^d\setminus\{\bm{0}\}}          
                \dfrac{1}{ |\vk|^{\valpha}}\left(h^{\gamma + d} + h^{|\valpha|}\right).
        \]
        Choosing an arbitrary $\valpha$ with $|\valpha| = \max(d+1, d+\gamma)$, we obtain 
        \[
                \left|\mathcal{E}_V(f(1 - \psi_h), \mathcal{X})\right|
                \leqslant 
                C \mathcal{H}_{V, \bm{0}}^{|\valpha|}(f) h^{\gamma + d},
        \]
        which together with \cref{eqn:error_split} and \cref{eqn:01} proves the lemma. 
\end{proof}

\begin{rem}\label{rem:quaderror1}
        If we replace $f(\vx)$ by $f(\vx, \vy)$ defined in $V \times V_Y$ which is smooth everywhere in $V\times V_Y$ except at $\vx = \bm{0}$ with 
        order $\gamma$, \cref{lem:quaderror1} can be generalized to 
        \[
                \left|\mathcal{E}_V(f(\cdot, \vy), \mathcal{X})\right| \leqslant C \mathcal{H}_{V\times V_Y, (\bm{0},\cdot)}^{d+\max(1,\gamma)}(f) m^{-(d + \gamma)}, \quad \forall \vy \in V_Y,
        \] 
        where the prefactor applies uniformly across all  $\vy\in V_Y$. 
        This generalization can be obtained using the prefactor estimate in \cref{lem:quaderror1} and the fact that 
        \[
                \mathcal{H}_{V, \bm{0}}^{l}(f(\cdot, \vy)) \leqslant 
                \mathcal{H}_{V\times V_Y, (\bm{0},\cdot)}^{l}(f), \quad \forall l \geqslant 0, \forall \vy\in V_Y,
        \]
        based on the definitions of the two quantities in \cref{eqn:fractional_univariate} and \cref{eqn:fractional_multivariate}. 
\end{rem}

\begin{lem}\label{lem:quaderror2}
        Let $f(\vx) = f_1(\vx)f_2(\vx)$ where $f_1(\vx)$ and $f_2(\vx)$ are periodic with respect to $V = [-\frac12,\frac12]^d$ and
        \begin{itemize}
                \item $f_1(\vx)$ is smooth everywhere except at $\vx = \vz_1 = \bm{0}$ with order $\gamma\leqslant 0$,
                \item $f_2(\vx)$ is smooth everywhere except at $\vx = \vz_2 \neq \bm{0}$ with order $0$.
        \end{itemize}
        Consider an $m^d$-sized uniform mesh $\mathcal{X}$ in $V$. Assume that $\mathcal{X}$ satisfies that $\vz_1, \vz_2$ are either on the 
        mesh or $\Theta(m^{-1})$ away from any mesh points, and $m$ is sufficiently large that $|\vz_1 - \vz_2| = \Omega(m^{-1})$.
        At $\vx = \vz_1$ and $\vx = \vz_2$, $f(\vx)$ is set to $0$. 
        The trapezoidal rule using $\mathcal{X}$ has quadrature error
        \[
                \left|\mathcal{E}_V(f, \mathcal{X})\right| \leqslant C \mathcal{H}_{V, \vz_1}^{d+1}(f_1)\mathcal{H}_{V, \vz_2}^{d+1}(f_2)m^{-(d + \gamma)}.
        \]
        If $f(\vz_1)$ and $f(\vz_2)$ are set to arbitrary $\Or(1)$ values, it introduces additional $\Or(m^{-d})$ quadrature error. 
\end{lem}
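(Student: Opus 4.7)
The plan is to reduce the two-singularity case to two instances of Lemma~\ref{lem:quaderror1} by means of a smooth partition of unity that isolates each singular point. Since $\vz_1 \neq \vz_2$ are fixed and thus separated by a positive distance $r_0 = |\vz_1 - \vz_2|$ independent of $m$, I can pick cutoff functions $\psi_1(\vx) = \psi((\vx - \vz_1)/L)$ and $\psi_2(\vx) = \psi((\vx - \vz_2)/L)$ with $L < r_0/4$, where $\psi$ is the bump function introduced in the proof of Lemma~\ref{lem:quaderror1}. Then $\psi_1, \psi_2$ have disjoint supports inside $V$, and I split
\[
f = f\psi_1 + f\psi_2 + f(1 - \psi_1 - \psi_2).
\]
The quadrature error splits additively, and I plan to estimate each piece in turn.

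For the third piece $f(1-\psi_1-\psi_2)$, both $f_1$ and $f_2$ are smooth on its support, so this is a smooth, periodic function on $V$; Lemma~\ref{lem:quaderror0} gives super-algebraic decay, which is negligible. For the first piece $f\psi_1 = f_1 \cdot (f_2\psi_1)$, the factor $f_2\psi_1$ is smooth on all of $V$ because $\text{supp}\,\psi_1$ is bounded away from $\vz_2$. Consequently $f\psi_1$ has algebraic singularity of order $\gamma$ only at $\vz_1$, and I apply (a mild generalization of) Lemma~\ref{lem:quaderror1} to obtain the bound $C\,\mathcal{H}^{d+\max(1,\gamma)}_{V,\vz_1}(f\psi_1)\,m^{-(d+\gamma)}$. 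For the second piece $f\psi_2$, by symmetric reasoning $f\psi_2$ has algebraic singularity of order $0$ only at $\vz_2$, so Lemma~\ref{lem:quaderror1} yields $C\,\mathcal{H}^{d+1}_{V,\vz_2}(f\psi_2)\,m^{-d}$. Since $\gamma \leqslant 0$, the dominant error is $m^{-(d+\gamma)}$ from the $f\psi_1$ piece.

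The main obstacle is controlling the two $\mathcal{H}$-prefactors by the product $\mathcal{H}^{d+1}_{V,\vz_1}(f_1)\mathcal{H}^{d+1}_{V,\vz_2}(f_2)$ as stated. I will use the Leibniz rule on $f\psi_1 = f_1 \cdot (f_2\psi_1)$: the derivatives of $f_1$ up to order $d+1$ satisfy $|\partial^{\valpha}f_1(\vx)| \leqslant \mathcal{H}^{d+1}_{V,\vz_1}(f_1)|\vx-\vz_1|^{\gamma-|\valpha|}$, while the derivatives of $f_2\psi_1$ are bounded uniformly on $\text{supp}\,\psi_1$ because $|\vx - \vz_2|\geqslant r_0 - L$ there, so $|\partial^{\vbeta}f_2(\vx)|\leqslant \mathcal{H}^{d+1}_{V,\vz_2}(f_2)(r_0-L)^{-|\vbeta|}$. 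Absorbing the constants depending only on $r_0$ and $\|\psi\|_{C^{d+1}}$ into $C$, this gives $\mathcal{H}^{d+1}_{V,\vz_1}(f\psi_1) \leqslant C\,\mathcal{H}^{d+1}_{V,\vz_1}(f_1)\mathcal{H}^{d+1}_{V,\vz_2}(f_2)$, and analogously for $\mathcal{H}^{d+1}_{V,\vz_2}(f\psi_2)$.

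The remaining technical subtlety is that Lemma~\ref{lem:quaderror1} is stated for a mesh that contains the singular point at the origin, whereas here $\vz_1$ (and $\vz_2$) may instead be $\Theta(m^{-1})$ away from every mesh point. I plan to handle this by a routine extension of Lemma~\ref{lem:quaderror1}'s proof: the Poisson-summation argument and the $|\vx|\leqslant h$ cap estimate both go through after translating the reference point by $O(m^{-1})$, since this translation costs only a uniformly bounded factor in the cutoff $\psi_h$ argument. Finally, assigning arbitrary $\Or(1)$ values at $\vz_1, \vz_2$ (when they do lie on the mesh) changes at most two quadrature weights, each of size $|V|/m^d$, contributing the stated $\Or(m^{-d})$ correction.
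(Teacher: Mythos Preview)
Your partition-of-unity reduction to two single-singularity problems is natural, but it rests on an assumption the lemma does not grant: you take $r_0 = |\vz_1-\vz_2|$ to be a positive constant independent of $m$, and then absorb powers of $r_0$ and $L$ into $C$. The hypothesis, however, only asks that $|\vz_1-\vz_2| = \Omega(m^{-1})$, and in the paper's application (the 4h2p linear term) one has $\vz_2 = \vk_i - \vk_a$ ranging over all nonzero points of the mesh $\mathcal{K}_\vq$, so $|\vz_2|$ can be as small as $\Theta(m^{-1})$ and the constant must be uniform in $\vz_2$. With $r_0 \sim m^{-1}$ your fixed-scale cutoffs collapse: you would need $L \sim m^{-1}$, so $\|\psi_i\|_{C^{d+1}}$ blows up like $m^{d+1}$, and the ``smooth'' remainder $f(1-\psi_1-\psi_2)$ has $C^{d+1}$-norm that also grows with $m$, wrecking the super-algebraic bound from Lemma~\ref{lem:quaderror0}. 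The $f\psi_2$ piece likewise picks up a factor $r_0^{\gamma}$ in its prefactor; one can check this is exactly compensated by the gap between $m^{-d}$ and $m^{-(d+\gamma)}$, but you have not tracked this, and the remainder piece genuinely fails.

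The paper avoids this by working directly at scale $h = m^{-1}$: after a single fixed-scale localization to make $f$ compactly supported, it introduces two $h$-scale cutoffs $\psi_{h,1}, \psi_{h,2}$ of radius $C_1 h$ (with $C_1$ chosen so the two $h$-balls are disjoint, which the $\Omega(m^{-1})$ separation permits), writes
\[
\mathcal{E}_V(f,\mathcal{X}) = \mathcal{I}_V(f\psi_{h,1}) + \mathcal{I}_V(f\psi_{h,2}) + \mathcal{E}_V\bigl(f(1-\psi_{h,1})(1-\psi_{h,2}),\mathcal{X}\bigr),
\]
and bounds the last term via Poisson summation and a H\"older-inequality estimate on $\int |\vx-\vz_1|^{\gamma-|\vbeta_1|}|\vx-\vz_2|^{-|\vbeta_2|}\,d\vx$ over the twice-punctured domain. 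This yields a constant independent of $\vz_2$. Your approach can in principle be repaired by taking $L = r_0/4$ and carefully redoing all three pieces with $r_0$-dependent bounds, but at that point it essentially merges into the paper's argument.
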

\begin{proof}
        First, we can introduce a proper translation $f(\vx) \rightarrow f(\vx - \vx_0)$ and move the nonsmooth points $\vz_1$ and $\vz_2$ 
        to both lie in the smaller cube $[-\frac14, \frac14]^d$ in $V$.
        Assume such a translation has been applied to $f$ and the mesh $\mathcal{X}$, which does not change the integral value due to the function periodicity.
        Using the cut-off function $\psi_\frac12(\vx)$, we split $f(\vx)$ as 
        \begin{equation}
                f(\vx) = f(\vx)\psi_\frac12^2(\vx) + f(\vx)(1- \psi_\frac12^2(\vx)),
        \end{equation}
        where the quadrature error for the second term decays super-algebraically according to the analysis in the proof of \cref{lem:quaderror1}. 
        The problem is reduced to the quadrature error estimate for the localized integrand $f\psi_\frac12^2$ which can be decomposed as 
        $
                f\psi_\frac12^2 = (f_1\psi_\frac12) (f_2\psi_\frac12).
        $
        In the following, we abuse the notation $f_i$ to denote $f_i\psi_\frac12$ and $f$ to denote $f\psi_\frac12^2$, and then assume that $f_1$ and $f_2$ is 
        are both compactly supported in $V$ and smooth everywhere except at $\vz_1$ and $\vz_2$ with order $\gamma$ and $0$, respectively. 
        Denote $h = 1/m$ be the unit length of $\mathcal{X}$.
        The remaining analysis follows the proof of \cref{lem:quaderror1}. 

        First, the quadrature error of the trapezoidal rule can be reformulated as 
        \begin{equation}
                \label{eqn:error_split2}
                \mathcal{E}_V(f, \mathcal{X})
                = 
                \left(
                        \mathcal{I}_V\left(f\psi_{h,1}\right)+
                        \mathcal{I}_V\left(f\psi_{h,2}\right)
                \right)+
                \mathcal{E}_V\left(f(1 -\psi_{h,1})(1 -\psi_{h,2}), \mathcal{X}\right),
        \end{equation}
        where $\psi_{h,1}(\vx)$ and $\psi_{h,2}(\vx)$ are two scaled and shifted cutoff functions centered at $\vz_1$ and $\vz_2$, respectively, 
        with cutoff radius $C_1h$ as 
        \begin{align*}
                \psi_{h,1}(\vx) = \psi\left(\dfrac{\vx- \vz_1}{C_1h}\right),
                \qquad 
                \psi_{h,2}(\vx) = \psi\left(\dfrac{\vx- \vz_2}{C_1h}\right),
        \end{align*}
        where $C_1$ is a constant such that the two balls $B(\vz_1, C_1h)$ and $B(\vz_2, C_1h)$  
        do not overlap with each other or with any mesh points in $\mathcal{X}$ other than 
        $\vz_1$ and $\vz_2$. 
        Here we use $B(\vx, r)$ to denote a ball centered at $\vx$ with radius $r$. Such $C_1$ exists based on the assumptions over $\mathcal{X}$ in the lemma.  

        The first part in \cref{eqn:error_split2} can be estimated directly as 
        \begin{align*}
                & 
                \left| 
                        \mathcal{I}_V\left(f\psi_{h,1}\right)+
                        \mathcal{I}_V\left(f\psi_{h,2}\right)
                \right|
                \\
                \leqslant&  
                \int_{B(\vz_1, C_1h)}\ud\vx |f(\vx)|
                +\int_{B(\vz_2, C_1h)}\ud\vx |f(\vx)|
                \\
                \leqslant& 
                C
                \mathcal{H}_{V, \vz_1}^0(f_1)
                \mathcal{H}_{V, \vz_2}^0(f_2)
                \left(
                        \int_{B(\vz_1, C_1h)}\ud\vx |\vx - \vz_1|^{\gamma}
                        + 
                        \int_{B(\vz_2, C_1h)}\ud\vx h^\gamma|\vx - \vz_2|^{0}
                \right)
                \\
                \leqslant & 
                C
                \mathcal{H}_{V, \vz_1}^0(f_1)
                \mathcal{H}_{V, \vz_2}^0(f_2)
                h^{d+\gamma},
        \end{align*}
        where the second inequality uses the two estimates from \cref{eqn:fractional_univariate_ineq} as
        \begin{align*}
                |f_1(\vx)| & \leqslant \mathcal{H}_{V, \vz_1}^0(f_1)|\vx - \vz_1|^{\gamma},     
                \\
                |f_2(\vx)| & \leqslant \mathcal{H}_{V, \vz_2}^0(f_2)|\vx - \vz_2|^0.
        \end{align*}
        The second part in \cref{eqn:error_split2} can be estimated using Poisson summation formula with any $\valpha \geqslant \bm{0}$ as
        \begin{align}
                & 
                \left|\mathcal{E}_V\left(
                f(1 -\psi_{h,1})(1 -\psi_{h,2}), \mathcal{X}\right)\right|
                \nonumber\\
                \leqslant&  
                \sum_{\vk \in 2\pi\mathbb{Z}^d\setminus\{\bm{0}\}} \left|\hat{f}_{\psi, h}\left(\dfrac{\vk}{h}\right)\right|
                \nonumber\\
                \leqslant&
                        \sum_{\vk \in 2\pi\mathbb{Z}^d\setminus\{\bm{0}\}}  \dfrac{h^{|\valpha|}}{|\vk|^{\valpha}}
                \int_{\mathbb{R}^d}\ud\vx 
                \left|
                        \dfrac{\partial^\valpha}{\partial \vx^\valpha}
                \left(
                        f(\vx)(1 -\psi_{h,1}(\vx))(1 -\psi_{h,2}(\vx))
                \right)
                \right|
                \nonumber \\
                \leqslant 
                & 
                C \sum_{\vk \in 2\pi\mathbb{Z}^d\setminus\{\bm{0}\}}  \dfrac{h^{|\valpha|}}{|\vk|^{\valpha}}
                \int_{\mathbb{R}^d}\ud\vx       
                \sum_{\valpha_1+\valpha_2+\valpha_3 =\valpha} 
                \left|
                        \partial^{\valpha_1}_{\vx}(1 -\psi_{h,1}(\vx))
                        \partial^{\valpha_2}_{\vx}(1 -\psi_{h,2}(\vx))
                        \partial^{\valpha_3}_{\vx}f(\vx)
                \right|.
                \label{eqn:f_fourier2}
        \end{align}
        To estimate the last integral for each set of $(\valpha_1, \valpha_2, \valpha_3)$, we consider three cases: 
        \begin{itemize}
                \item $\valpha_1 > 0$. 
                Since $1 -\psi_{h,1}(\vx)$ is constant inside $B(\vz_1, \frac12 C_1h)$ and outside $B(\vz_1, C_1h)$, its derivatives are 
                nonzero only in the annulus $B(\vz_1, C_1)\setminus B(\vz_1, \frac12 C_1h)$. 
                The derivative term in \cref{eqn:f_fourier2} in this case can be estimated as 
                \[
                \left|
                        \partial^{\valpha_1}_{\vx}(1 -\psi_{h,1})
                        \partial^{\valpha_2}_{\vx}(1 -\psi_{h,2})
                        \partial^{\valpha_3}_{\vx}f
                \right|
                \leqslant C
                \mathcal{H}_{V, \vz_1}^{|\valpha|}(f_1)
                \mathcal{H}_{V, \vz_2}^{|\valpha|}(f_2)
                \begin{cases}
                        0 & |\vx - \vz_1| < \frac12C_1h\\
                        0 & |\vx - \vz_1| > C_1h\\      
                        h^{\gamma - |\valpha|} & \text{otherwise}                
                \end{cases}.
                \]
                The estimate for $\vx$ in the annulus around $\vz_1$ above uses for any $\vbeta \geqslant \bm{0}$
                \begin{align*}
                        \left|
                                \partial^{\vbeta}_\vx f(\vx) 
                        \right|
                        & \leqslant 
                        C
                        \sum_{\vbeta_1+\vbeta_2 = \vbeta} 
                        \left| 
                                \partial^{\vbeta_1}_\vx f_1(\vx)
                                \partial^{\vbeta_2}_\vx f_2(\vx)
                        \right|
                        \\
                        & \leqslant
                        C
                        \mathcal{H}_{V, \vz_1}^{|\vbeta|}(f_1)
                        \mathcal{H}_{V, \vz_2}^{|\vbeta|}(f_2)
                        \sum_{\vbeta_1+\vbeta_2 = \vbeta} 
                                |\vx - \vz_1|^{\gamma - |\vbeta_1|}
                                |\vx - \vz_2|^{-|\vbeta_2|} 
                        \\
                        & \leqslant
                        C
                        \mathcal{H}_{V, \vz_1}^{|\vbeta|}(f_1)
                        \mathcal{H}_{V, \vz_2}^{|\vbeta|}(f_2)
                        \sum_{\vbeta_1+\vbeta_2 = \vbeta} 
                                h^{\gamma - |\vbeta_1|}
                                h^{-|\vbeta_2|} 
                        \\
                        & \leqslant 
                        C
                        \mathcal{H}_{V, \vz_1}^{|\vbeta|}(f_1)
                        \mathcal{H}_{V, \vz_2}^{|\vbeta|}(f_2)
                        h^{\gamma - |\vbeta|},
                \end{align*}
                where the third inequality uses the fact $|\vx - \vz_1| = \Or(h)$ and $|\vx - \vz_2| = \Omega(h)$. 

                \item $\valpha_2 > 0$. 
                Similar to the first case, we could get an estimate of the derivative as 
                \[
                \left|
                        \partial^{\valpha_1}_{\vx}(1 -\psi_{h,1})
                        \partial^{\valpha_2}_{\vx}(1 -\psi_{h,2})
                        \partial^{\valpha_3}_{\vx}f
                \right|
                \leqslant 
                C
                \mathcal{H}_{V, \vz_1}^{|\valpha|}(f_1)
                \mathcal{H}_{V, \vz_2}^{|\valpha|}(f_2)
                \begin{cases}
                        0 & |\vx - \vz_2| < \frac12 C_1 h\\
                        0 & |\vx - \vz_2| > C_1 h\\     
                        h^{\gamma - |\valpha|} & \text{otherwise}                
                \end{cases}.
                \]
                
                \item $\valpha_1= \valpha_2 = \bm{0}, \valpha_3 = \valpha$. We could use the estimate in the first case for $\partial^{\valpha}_\vx f(\vx)$ to get
                \[
                \left|
                (1 -\psi_{h,1})
                (1 -\psi_{h,2})
                \partial^\valpha_\vx f
                \right|
                \leqslant C
                \mathcal{H}_{V, \vz_1}^{|\valpha|}(f_1)
                \mathcal{H}_{V, \vz_2}^{|\valpha|}(f_2)
                \begin{cases}
                        0 & |\vx - \vz_1| < \frac12C_1h\\
                        0 & |\vx - \vz_2| < \frac12C_1h\\       
                        \sum_{\vbeta_1+\vbeta_2 = \valpha}  
                        |\vx - \vz_1|^{\gamma - |\vbeta_1|}
                        |\vx - \vz_2|^{-|\vbeta_2|} 
                         & \text{otherwise with } |\vx| < 1             
                \end{cases}.
                \]
        \end{itemize}
        Based on the analysis of the three cases above, we get an estimate of the integral as 
        \[
                \int_{\mathbb{R}^d}\ud\vx
                \left|
                        \partial^{\valpha_1}_{\vx}(1 -\psi_{h,1})
                        \partial^{\valpha_2}_{\vx}(1 -\psi_{h,2})
                        \partial^{\valpha_3}_{\vx}f
                \right| 
                \leqslant 
                        C
                        \mathcal{H}_{V, \vz_1}^{|\valpha|}(f_1)
                        \mathcal{H}_{V, \vz_2}^{|\valpha|}(f_2)
                \begin{cases}
                        h^{\gamma - |\valpha| + d} & \valpha_1 > 0 \text{ or } \valpha_2 > 0 \\
                        1 + h^{\gamma - |\valpha| + d} & \valpha_1 = \valpha_2 = \bm{0}\\
                \end{cases}.
        \]
        The estimate for the case with $\valpha_1 = \valpha_2 = \bm{0}$ can be obtained directly when $\vbeta_1= \valpha$ and be
        bounded as follows when $\vbeta_1< \valpha$ 
        \begin{align*}
                & \quad  
                \int_{B(\bm{0}, 2)\setminus (B(\vz_1, \frac12C_1h)\cup B(\vz_2, \frac12C_1h))}\ud \vx
                |\vx - \vz_1|^{\gamma - |\vbeta_1|}
                |\vx - \vz_2|^{|\vbeta_1| - |\valpha|} 
                \\
                & \leqslant
                \left(
                \int_{B(\bm{0}, 2)\setminus B(\vz_1, \frac12C_1h)}\ud \vx
                |\vx - \vz_1|^{p\gamma - p|\vbeta_1|}
                \right)^{\frac{1}{p}}
                \left(
                \int_{B(\bm{0}, 2)\setminus B(\vz_2, \frac12C_1h)}\ud \vx
                |\vx - \vz_2|^{q|\vbeta_1| - q|\valpha|}
                \right)^{\frac{1}{q}}
                \\
                & \leqslant
                C
                \left(1 + h^{p\gamma - p|\vbeta_1| + d}\right)^\frac{1}{p}
                \left(1 + h^{q|\vbeta_1| - q|\valpha| + d}\right)^\frac{1}{q}
                \\
                & \leqslant
                C\left(
                1 + h^{\gamma - |\vbeta_1| + \frac{1}{p}d}
                +
                h^{|\vbeta_1| - |\valpha| + \frac{1}{q}d}
                + h^{\gamma - |\valpha| + d}
                \right)
                \leqslant
                C\left(1 + h^{\gamma - |\valpha| + d}\right),
        \end{align*}
        where the last estimate assumes $|\valpha| \geqslant \gamma + d$ 
        and is obtained by setting the H{\"o}lder inequality exponents as
        \[
                \begin{cases}
                        p = 1, q = \infty & \text{when }\ d + \gamma - |\vbeta_1| \leqslant 0 \\
                        p = \frac{d}{|\vbeta_1| - \gamma}, q = \frac{d}{d + \gamma - |\vbeta_1|} & \text{when }\ d + \gamma - |\vbeta_1| >  0 
                \end{cases}.
        \]

        Plugging this estimate into \cref{eqn:f_fourier2} and choosing an arbitrary $\valpha$ with $|\valpha| = d+1$, we obtain 
        \[
                \left|
                        \mathcal{E}_V\left(f(1 -\psi_{h,1})(1 -\psi_{h,2}), \mathcal{X}\right)
                \right|
                \leqslant 
                C 
                \mathcal{H}_{V, \vz_1}^{d+1}(f_1)
                \mathcal{H}_{V, \vz_2}^{d+1}(f_2)
                h^{d + \gamma},
        \]
        which together with \cref{eqn:error_split2} proves the lemma. 
\end{proof}

\begin{rem}\label{rem:quaderror2}
        If we replace $f_i(\vx)$ with $i=1,2$ by $f_i(\vx, \vy)$ defined in $V\times V_Y$ which is smooth everywhere in $V\times V_Y$ except at $\vx = \vz_i$ with order $\gamma$ and $0$, respectively,
        \cref{lem:quaderror2} can be generalized to 
        \[
                \left|\mathcal{E}_V(f_1(\cdot, \vy)f_2(\cdot, \vy), \mathcal{X})\right| \leqslant C \mathcal{H}_{V\times V_Y, (\bm{0},\cdot)}^{d+1}(f_1) \mathcal{H}_{V\times V_Y, (\bm{0},\cdot)}^{d+1}(f_2)m^{-(d + \gamma)}, \quad \forall \vy \in V_Y,
        \] 
        where the prefactor applies uniformly across all $\vy \in V_Y$. 
        This generalization can be obtained using the prefactor characterization in \cref{lem:quaderror1} and a similar discussion as in \cref{rem:quaderror1}.
\end{rem}

\begin{lem}\label{lem:quaderror3}
        Let $f(\vx_1, \vx_2) = f_1(\vx_1, \vx_2)f_2(\vx_1, \vx_2)$ where  $f_1(\vx_1, \vx_2)$ and $f_2(\vx_1, \vx_2)$ are periodic with respect to $\vx_1, \vx_2 \in V = [-\frac12,\frac12]^d$ and
        \begin{itemize}
                \item $f_1(\vx_1, \vx_2)$ is smooth everywhere except at $\vx_1 = \bm{0}$ with order $\gamma_1$, 
                \item $f_2(\vx_1, \vx_2)$ is smooth everywhere except at $\vx_2 = \bm{0}$ with order $\gamma_2$. 
        \end{itemize}
        Consider an $m^d$-sized uniform mesh $\mathcal{X}$ in $V$ that contains $\vx = \bm{0}$.
        At $\vx_1 = \bm{0}$ or $\vx_2 = \bm{0}$, $f(\vx_1, \vx_2)$ is set to zero. 
        The trapezoidal rule using $\mathcal{X}\times\mathcal{X}$ for $f(\vx_1, \vx_2)$ has quadrature error
        \[
                \left|
                        \mathcal{E}_{V\times V}(f, \mathcal{X}\times \mathcal{X})
                \right|
                \leqslant 
                C
                \mathcal{H}_{V\times V,(\bm{0},\cdot)}^{d+\max(1, \gamma_1,\gamma_2)}(f_1)
                \mathcal{H}_{V\times V,(\cdot,\bm{0})}^{d+\max(1, \gamma_1,\gamma_2)}(f_2)
                m^{-(d+\min_i\gamma_i)},                
        \]
        If $f(\vx_1, \bm{0})$ and $f(\bm{0}, \vx_2)$ are set to arbitrary $\Or(1)$ values, it introduces additional  
        $\Or(m^{-d})$ quadrature error.
\end{lem}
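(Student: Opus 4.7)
The plan is to reduce the two–dimensional quadrature error to two one–dimensional problems by a Fubini–style splitting that exploits the fact that the singularity of $f_1$ is confined to the $\vx_1$-direction and that of $f_2$ to the $\vx_2$-direction. Introducing the partial integral $g_1(\vx_1) = \int_V f(\vx_1,\vx_2)\,d\vx_2$, I will use the exact identity
\[
\mathcal{E}_{V\times V}(f,\mathcal{X}\times\mathcal{X}) \;=\; \mathcal{E}_V(g_1,\mathcal{X}) \;+\; \frac{|V|}{m^d}\sum_{\vx_1\in\mathcal{X}} \mathcal{E}_V(f(\vx_1,\cdot),\mathcal{X}),
\]
and bound the two terms separately via \cref{lem:quaderror1} (together with its multivariate form in \cref{rem:quaderror1}). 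The convention that $f$ vanishes on $\{\vx_1=\bm{0}\}\cup\{\vx_2=\bm{0}\}$ makes the $\vx_1=\bm{0}$ slice contribute trivially to the sum; the additional $O(m^{-d})$ error claimed for arbitrary values at the singular points follows afterward by a direct estimate on the $O(m^d)$ affected mesh points, each carrying weight $|V|^2 m^{-2d}$.

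For the first term, I will show that $g_1$ inherits algebraic singularity of order $\gamma_1$ at $\vx_1=\bm{0}$. For $\vx_1\neq\bm{0}$, differentiation under the integral is justified by the Leibniz expansion together with the factorwise bounds $|\partial^{\valpha_1}_{\vx_1} f_1(\vx_1,\vx_2)| \leq \mathcal{H}^{|\valpha|}_{V\times V,(\bm{0},\cdot)}(f_1)\,|\vx_1|^{\gamma_1-|\valpha_1|}$ and $|\partial^{\valpha_2}_{\vx_1} f_2(\vx_1,\vx_2)| \leq \mathcal{H}^{|\valpha|}_{V\times V,(\cdot,\bm{0})}(f_2)\,|\vx_2|^{\gamma_2}$, which after integration in $\vx_2\in V$ (finite since $\gamma_2>-d$) yield $|\partial^{\valpha}_{\vx_1} g_1(\vx_1)| \leq C|\vx_1|^{\gamma_1-|\valpha|}$. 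Applying \cref{lem:quaderror1} then gives $|\mathcal{E}_V(g_1,\mathcal{X})| \leq C\,\mathcal{H}^{d+\max(1,\gamma_1)}_{V\times V,(\bm{0},\cdot)}(f_1)\,\mathcal{H}^{d+\max(1,\gamma_1)}_{V\times V,(\cdot,\bm{0})}(f_2)\,m^{-(d+\gamma_1)}$.

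For the second term, I fix $\vx_1\in\mathcal{X}\setminus\{\bm{0}\}$ and view $\vx_2\mapsto f(\vx_1,\vx_2)$ as a univariate function that is smooth away from $\vx_2=\bm{0}$ with algebraic singularity of order $\gamma_2$ there. The same Leibniz product bound used above gives the prefactor estimate $\mathcal{H}^l_{V,\bm{0}}(f(\vx_1,\cdot)) \leq C\,\mathcal{H}^l_{V\times V,(\bm{0},\cdot)}(f_1)\,\mathcal{H}^l_{V\times V,(\cdot,\bm{0})}(f_2)\,|\vx_1|^{\gamma_1}$, so \cref{lem:quaderror1} yields $|\mathcal{E}_V(f(\vx_1,\cdot),\mathcal{X})| \leq C\,|\vx_1|^{\gamma_1}\,m^{-(d+\gamma_2)}$. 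Summing over $\vx_1\in\mathcal{X}\setminus\{\bm{0}\}$ and recognizing the resulting expression as a Riemann sum for $\int_V |\vx_1|^{\gamma_1}\,d\vx_1$ (itself bounded via another application of \cref{lem:quaderror1} to the weight $|\vx_1|^{\gamma_1}$), I conclude that $m^{-d}\sum_{\vx_1\in\mathcal{X}\setminus\{\bm{0}\}}|\vx_1|^{\gamma_1} = O(1)$ whenever $\gamma_1>-d$. This bounds the second term by $O(m^{-(d+\gamma_2)})$, and combining with the first gives the claimed rate $O(m^{-(d+\min_i\gamma_i)})$, irrespective of which of the two $\gamma_i$ attains the minimum.

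The main obstacle I anticipate is the careful accounting of prefactors through the Leibniz expansion: each derivative of the product $f_1 f_2$ distributes into many cross terms, and one must verify that all of them are dominated by the same shape $|\vx_1|^{\gamma_1-|\valpha|}|\vx_2|^{\gamma_2}$ using that $f_2$ is smooth in $\vx_1$ uniformly in $\vx_2$ while inheriting only $|\vx_2|^{\gamma_2}$ regularity in $\vx_2$. Justifying the interchange of derivative and $\vx_2$-integral for $g_1$ as $\vx_1\to\bm{0}$ requires uniformity of these estimates across $\vx_2\in V$, which is exactly what the multivariate $\mathcal{H}$ constants encode, so once the bookkeeping is in place the dominated convergence argument is routine.
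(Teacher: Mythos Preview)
Your proposal is correct and follows essentially the same approach as the paper: the same Fubini-style splitting of $\mathcal{E}_{V\times V}$ into $\mathcal{E}_V(g_1,\mathcal{X})$ plus the averaged slice errors, the same Leibniz-rule verification that $g_1$ has algebraic singularity of order $\gamma_1$, the same prefactor bound $\mathcal{H}^l_{V,\bm{0}}(f(\vx_1,\cdot))\leq C|\vx_1|^{\gamma_1}$ for the slices, and the same final summation. The only cosmetic difference is that you explicitly justify $m^{-d}\sum_{\vx_1}|\vx_1|^{\gamma_1}=O(1)$ via \cref{lem:quaderror1}, whereas the paper asserts this bound directly.
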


\begin{proof}
The quadrature error for $f$ can be split into two parts
\begin{equation} \label{eqn:error_split3}
        \mathcal{E}_{V\times V}(f, \mathcal{X}\times \mathcal{X}) 
         = 
        \mathcal{E}_{V}\left(\int_V \ud\vx_2 f(\cdot, \vx_2), \mathcal{X}\right) 
        +
        \dfrac{|V|}{m^d}\sum_{\vx_1 \in \mathcal{X}} 
        \mathcal{E}_{V}\left(f(\vx_1, \cdot), \mathcal{X}\right). 
\end{equation}  

For the first quadrature error, $\int_V \ud\vx_2 f(\vx_1, \vx_2)$ as a function of $\vx_1$ is periodic with respect to $V$. 
Using the Leibniz integral rule, we can show that the integrand is smooth everywhere in $V$ except at $\vx_1 = \bm{0}$  and its derivatives can be estimated 
as 
\begin{align*}
        \left|
                \dfrac{\partial^\valpha}{\partial \vx_1^\valpha}
                \int_V \ud\vx_2 f(\vx_1, \vx_2)
        \right|
        & = 
        \left|
                \int_V \ud\vx_2
                \dfrac{\partial^\valpha}{\partial \vx_1^\valpha}
                f(\vx_1, \vx_2)
        \right|
        \\
        & \leqslant 
        C
        \int_V \ud\vx_2
        \sum_{\valpha_1 + \valpha_2 = \valpha}
        \left|
                \partial_{\vx_1}^{\valpha_1}
                f_1(\vx_1, \vx_2)
        \right|
        \left|
                \partial_{\vx_1}^{\valpha_2}
                f_2(\vx_1, \vx_2)
        \right|
        \\
        & 
        \leqslant
        C
        \int_V \ud\vx_2
        \sum_{\valpha_1 + \valpha_2 = \valpha}
                \left(
                        \mathcal{H}_{V\times V,(\bm{0},\cdot)}^{|\valpha|}(f_1)
                        |\vx_1|^{\gamma_1 - |\valpha_1|}
                \right)
                \left(
                        \mathcal{H}_{V\times V,(\cdot, \bm{0})}^{|\valpha|}(f_2)
                        |\vx_2|^{\gamma_2}
                \right)
        \\
        & 
        \leqslant
        C
                        \mathcal{H}_{V\times V,(\bm{0},\cdot)}^{|\valpha|}(f_1)
                        \mathcal{H}_{V\times V,(\cdot, \bm{0})}^{|\valpha|}(f_2)
        |\vx_1|^{\gamma_1 - |\valpha|},
\end{align*}
where the third inequality uses the estimate in \cref{eqn:fractional_multivariate} for multivariate functions with algebraic singularity. 
This derivative estimate suggests that $\int_V \ud\vx_2 f(\vx_1, \vx_2)$ is smooth everywhere in $V$ except at $\vx_1 = \bm{0}$ with order $\gamma_1$, 
and its algebraic singularity characterization satisfies, 
\[
        \mathcal{H}_{V,\bm{0}}^l
        \left(\int_V\ud\vx_2 f(\cdot, \vx_2)\right)
        \leqslant C
        \mathcal{H}_{V\times V,(\bm{0},\cdot)}^{l}(f_1)
        \mathcal{H}_{V\times V,(\cdot, \bm{0})}^{l}(f_2),\quad \forall l \geqslant 0.
\]
\cref{lem:quaderror1} then gives 
\begin{equation}\label{eqn:quaderror3_01}
        \left|
                \mathcal{E}_{V}\left(\int_V \ud\vx_2 f(\cdot, \vx_2), \mathcal{X}\right)
        \right| 
        \leqslant C 
                        \mathcal{H}_{V\times V,(\bm{0},\cdot)}^{d + \max(1,\gamma_1)}(f_1)
                        \mathcal{H}_{V\times V,(\cdot, \bm{0})}^{d + \max(1,\gamma_1)}(f_2)
        m^{-(d + \gamma_1)}.
\end{equation}

For the second quadrature error with each $\vx_1 \in \mathcal{X}$, $f(\vx_1, \vx_2)$ as a function of $\vx_2$ is periodic and smooth everywhere except at $\vx_2 = \bm{0}$
with order $\gamma_2$. 
Fixing $\vx_1$, \cref{lem:quaderror1} gives 
\[
        \left|
                \mathcal{E}_{V}\left(f(\vx_1, \cdot), \mathcal{X}\right)
        \right| 
        \leqslant C \mathcal{H}_{V, \bm{0}}^{d+\max(1,\gamma_2)}(f(\vx_1, \cdot)) 
        m^{-(d + \gamma_2)}, 
\]
where the characterization prefactor with $l = d + \max(1,\gamma_2)$ and any $\vx_1\in V$ can be further estimated by its definition in \cref{eqn:fractional_univariate} as
\begin{align}
        \mathcal{H}_{V, \bm{0}}^{l}(f(\vx_1, \cdot))
        & =  
        \max_{
                |\valpha|\leqslant l 
        }
        \left\|
                \dfrac{\partial^{\valpha}_{\vx_2}f(\vx_1, \vx_2)}{
                |\vx_2|^{\gamma_2 - |\valpha|}
        }
        \right\|_{L^\infty(V)}
        \nonumber\\
        & \leqslant 
        C
        \max_{
                |\valpha|\leqslant l 
        }
        \sum_{\valpha_1 + \valpha_2= \valpha}
        \left\|
                \partial^{\valpha_1}_{\vx_2}f_1(\vx_1, \vx_2)
                \dfrac{\partial^{\valpha_2}_{\vx_2}f_2(\vx_1, \vx_2)}{
                |\vx_2|^{\gamma_2 - |\valpha_2|}
                }
                |\vx_2|^{|\valpha_1|}
        \right\|_{L^\infty(V)}
        \nonumber\\
        & \leqslant 
        C 
        \max_{
                |\valpha|\leqslant l 
        }
        \sum_{\valpha_1 + \valpha_2 = \valpha}
        \left(
                \mathcal{H}_{V\times V,(\bm{0},\cdot)}^{l}(f_1)|\vx_1|^{\gamma_1}
        \right)
        \left(
                \mathcal{H}_{V\times V,(\cdot, \bm{0})}^{l}(f_2)
        \right)
        \left\|
                |\vx_2|^{|\valpha_1|}
        \right\|_{L^\infty(V)}
        \nonumber\\
        & 
        \leqslant
        C
                \mathcal{H}_{V\times V,(\bm{0},\cdot)}^{l}(f_1)
                \mathcal{H}_{V\times V,(\cdot,\bm{0})}^{l}(f_2)
        |\vx_1|^{\gamma_1},
        \label{eqn:quaderror3_02}
\end{align}
where the third inequality uses the estimate in \cref{eqn:fractional_multivariate}. 

Plugging the two estimates \cref{eqn:quaderror3_01} and \cref{eqn:quaderror3_02} above into \cref{eqn:error_split3}, we have 
\begin{align*}
        \left|
                \mathcal{E}_{V\times V}(f, \mathcal{X}\times \mathcal{X})         
        \right|
        & \leqslant 
        C
        \mathcal{H}_{V\times V,(\bm{0},\cdot)}^{l_*}(f_1)
        \mathcal{H}_{V\times V,(\cdot,\bm{0})}^{l_*}(f_2)
        \left(
        m^{-(d+\gamma_1)} + 
        \dfrac{|V|}{m^d}\sum_{\vx_1 \in\mathcal{X}} |\vx_1|^{\gamma_1} m^{-(d+\gamma_2)}
        \right)
        \\
        & \leqslant
        C
        \mathcal{H}_{V\times V,(\bm{0},\cdot)}^{l_*}(f_1)
        \mathcal{H}_{V\times V,(\cdot,\bm{0})}^{l_*}(f_2)
        m^{-(d+\min_i\gamma_i)},
\end{align*}
where $l_* = d + \max(1, \gamma_1, \gamma_2)$. 
This finishes the proof. 
\end{proof}

\begin{rem}\label{rem:quaderror3}
        If we replace $f_i(\vx_1, \vx_2)$ by $f_i(\vx_1, \vx_2, \vy)$ that are smooth and periodic with respect to $\vy\in V_Y$, \cref{lem:quaderror3} can be generalized as 
        \[
                \left|
                        \mathcal{E}_{V\times V}(f(\cdot, \cdot, \vy), \mathcal{X}\times \mathcal{X})
                \right|
                \leqslant 
                C
                \mathcal{H}_{V\times V\times V_Y,(\bm{0},\cdot, \cdot)}^{d+\max(1, \gamma_1,\gamma_2)}(f_1)
                \mathcal{H}_{V\times V\times V_Y,(\cdot,\bm{0}),\cdot}^{d+\max(1, \gamma_1,\gamma_2)}(f_2)
                m^{-(d+\min_i\gamma_i)}, \ \forall \vy \in V_Y,               
        \]
        where the prefactor applies uniformly across all $\vy \in V_Y$. 
\end{rem}

\begin{lem}\label{lem:quaderror4}
        Let $f(\vx_1, \vx_2) = f_1(\vx_1, \vx_2)f_2(\vx_1, \vx_2)f_3(\vx_1, \vx_2-\vx_1)$   satisfy that 
        \begin{itemize}
                \item $f_1(\vx_1, \vx_2)$ and $f_2(\vx_1, \vx_2)$ are the same as in \cref{lem:quaderror3},
                \item $f_3(\vx_1, \vz)$ is periodic with respect to $\vx_1, \vz \in V$ and smooth everywhere except at $\vz = \bm{0}$ with order $0$.
        \end{itemize} 
        Consider a uniform mesh $\mathcal{X}$ in $V$ that is of size $m^d$ and contains $\vx = \bm{0}$.
        At $\vx_1 = \bm{0}$ or $\vx_2 = \bm{0}$, $f(\vx_1, \vx_2)$ is set to $0$. 
        The trapezoidal rule using $\mathcal{X}\times\mathcal{X}$ for $f(\vx_1, \vx_2)$ has quadrature error
        \[
                \left|
                        \mathcal{E}_{V\times V}(f, \mathcal{X}\times \mathcal{X})
                \right|
                \leqslant 
                C 
                \mathcal{H}_{V\times V, (\bm{0}, \cdot)}^{d + \max(1, \gamma_1, \gamma_2)}(f_1)
                \mathcal{H}_{V\times V, (\cdot, \bm{0})}^{d + \max(1, \gamma_1, \gamma_2)}(f_2)
                \mathcal{H}_{V\times V, (\cdot, \bm{0})}^{d + \max(1, \gamma_1, \gamma_2)}(f_3)
                m^{-(d+\min_i\gamma_i)}.
        \]
        If $f(\vx_1, \bm{0})$ and $f(\bm{0}, \vx_2)$ are set to arbitrary $\Or(1)$ values, it introduces additional  
        $\Or(m^{-d})$ quadrature error.
\end{lem}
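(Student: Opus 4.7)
The plan is to mirror the two-step reduction used in the proof of \cref{lem:quaderror3}, splitting the double quadrature error into a ``partial integral'' piece and a ``parameterized single-variable'' piece:
\[
\mathcal{E}_{V\times V}(f, \mathcal{X}\times\mathcal{X}) = \mathcal{E}_{V}\Bigl(\int_V f(\cdot,\vx_2)\,d\vx_2,\,\mathcal{X}\Bigr) + \frac{|V|}{m^d}\sum_{\vx_1\in\mathcal{X}}\mathcal{E}_V\bigl(f(\vx_1,\cdot),\,\mathcal{X}\bigr),
\]
and estimate each piece separately. The new feature compared with \cref{lem:quaderror3} is the factor $f_3(\vx_1,\vx_2-\vx_1)$, whose singular set is the diagonal $\vx_2=\vx_1$; this diagonal singularity couples the two integration variables and is the principal source of extra work.

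For the first piece, I would analyze $F(\vx_1)=\int_V f(\vx_1,\vx_2)\,d\vx_2$ by performing the change of variable $\vx_2\to\vx_1+\vz$ (legitimate by periodicity of all three factors on $V$), giving
\[
F(\vx_1)=\int_V f_1(\vx_1,\vx_1+\vz)\,f_2(\vx_1,\vx_1+\vz)\,f_3(\vx_1,\vz)\,d\vz.
\]
For each fixed $\vx_1\neq\bm 0$, the $\vz$-integrand is smooth away from the two singular points $\vz=\bm 0$ (order $0$ from $f_3$) and $\vz=-\vx_1$ (order $\gamma_2$ from $f_2$), while $f_1(\vx_1,\vx_1+\vz)$ is smooth in $\vz$ and contributes an overall $|\vx_1|^{\gamma_1}$ prefactor to all $\vz$-derivative bounds.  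Applying \cref{lem:nonsmooth_integral} (with the $f_1$-factor absorbed into the multivariate constant on the right-hand side of its hypothesis) shows that $F$ is smooth on $V$ except at $\vx_1=\bm 0$, with algebraic singularity of order at most $\gamma_1+\max(0,\gamma_2)\geqslant \min_i\gamma_i$ and with $\mathcal{H}$-constant controlled by $\mathcal{H}(f_1)\mathcal{H}(f_2)\mathcal{H}(f_3)$.  \cref{lem:quaderror1} then delivers the partial-integral contribution at the bound $C\,m^{-(d+\min_i\gamma_i)}$.

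For the second piece, I would split the sum by $\vx_1=\bm 0$ versus $\vx_1\neq\bm 0$.  The contribution from $\vx_1=\bm 0$ is at most $O(m^{-d})$ by the convention $f(\bm 0,\cdot)\equiv 0$ and the $1/m^d$ prefactor.  For $\vx_1\in\mathcal{X}\setminus\{\bm 0\}$, the function $f(\vx_1,\cdot)$ is the product of a factor singular only at $\vx_2=\bm 0$ with order $\gamma_2$ (namely $f_1f_2$, with $|\vx_1|^{\gamma_1}$ prefactor absorbed as in estimate~\eqref{eqn:quaderror3_02}) and a factor singular only at $\vx_2=\vx_1$ with order $0$ (namely $f_3(\vx_1,\vx_2-\vx_1)$); both singular points lie on $\mathcal{X}$ and are separated by $|\vx_1|\geqslant 1/m=\Omega(m^{-1})$, so \cref{lem:quaderror2} applies and yields $|\mathcal{E}_V(f(\vx_1,\cdot),\mathcal{X})|\leqslant C\,|\vx_1|^{\gamma_1}\,m^{-(d+\gamma_2)}$.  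Summing this bound, the Riemann-sum estimate $\frac{|V|}{m^d}\sum_{\vx_1\in\mathcal{X}}|\vx_1|^{\gamma_1}\leqslant C$ (valid because $\gamma_1>-d$) gives a contribution of $m^{-(d+\gamma_2)}\leqslant m^{-(d+\min_i\gamma_i)}$.

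The hard part will be the first piece: rigorously tracking the algebraic-singularity order of $F(\vx_1)$ through the coalescence of the two $\vz$-singular points as $\vx_1\to\bm 0$, \emph{while} simultaneously incorporating the $f_1$-factor whose own singularity sits at the same coalescence point. A direct application of \cref{lem:nonsmooth_integral} handles the two coalescing $\vz$-singularities from $f_2$ and $f_3$, but the interaction with $f_1(\vx_1,\vx_1+\vz)$—which depends on $\vz$ and has derivatives in $\vx_1$ of order $|\vx_1|^{\gamma_1-|\valpha_1|}$ by virtue of $f_1\in\mathbb{T}$-type behavior—must be controlled by a Leibniz-rule expansion of $\partial_{\vx_1}^{\valpha}F$ followed by careful integration of products $|\vx_1|^{\gamma_1-|\valpha_1|}|\vx_1+\vz|^{\gamma_2-|\valpha_2|}|\vz|^{-|\valpha_3|}$ over $\vz\in V$, using H{\"o}lder or case-splitting on the relative sizes of $|\vz|$, $|\vx_1+\vz|$ and $|\vx_1|$ exactly as in the proof of \cref{lem:quaderror2}.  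The bookkeeping to ensure that the final prefactor is the product of $\mathcal{H}$-constants of $f_1,f_2,f_3$ (as stated in the lemma) is the main technical burden.
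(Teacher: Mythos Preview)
Your overall strategy and your treatment of the second piece via \cref{lem:quaderror2} are correct and match the paper exactly. The gap is in the first piece.

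After your change of variable you propose to compute $\partial_{\vx_1}^{\valpha}F$ by a Leibniz expansion under the integral, then bound the resulting products $|\vx_1|^{\gamma_1-|\valpha_1|}|\vx_1+\vz|^{\gamma_2-|\valpha_2|}|\vz|^{-|\valpha_3|}$. But differentiation under the integral is \emph{not} justified here: the singularity of $f_2(\vx_1,\vx_1+\vz)$ sits at $\vz=-\vx_1$, which moves with $\vx_1$, and once $|\valpha_2|\geqslant d+\gamma_2$ the factor $|\vx_1+\vz|^{\gamma_2-|\valpha_2|}$ ceases to be integrable in $\vz$. Since \cref{lem:quaderror1} requires derivative bounds up to order $d+\max(1,\gamma_1)$, the argument breaks precisely at the orders you need. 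This moving-singularity obstruction is exactly what \cref{lem:nonsmooth_integral} is built to handle --- but you cannot invoke that lemma directly either, because its hypothesis demands smooth dependence on the parameter, and in your setup the parameter is $\vx_1$ with $f_1(\vx_1,\cdot)$ singular at $\vx_1=\bm 0$. The H{\"o}lder and case-splitting estimates from the proof of \cref{lem:quaderror2} bound integrals of already-formed products; they do not address the interchange of derivative and integral.

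The paper's fix is a clean separation-of-roles trick: define the auxiliary function
\[
F(\vx_1,\vy)=\int_V f_1(\vy,\vx_2)\,f_2(\vy,\vx_2)\,f_3(\vy,\vx_2-\vx_1)\,d\vx_2,
\]
so that the desired partial integral is the diagonal value $F(\vx_1,\vx_1)$. The two roles of $\vx_1$ are now decoupled. With $\vy\neq\bm 0$ fixed, $f_1(\vy,\cdot)$ is smooth in $\vx_2$, leaving exactly two $\vx_2$-singularities (at $\vx_2=\bm 0$ and $\vx_2=\vx_1$); \cref{lem:nonsmooth_integral} then applies cleanly and gives order-$0$ algebraic singularity in $\vx_1$. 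With $\vx_1$ fixed, the $\vx_2$-singularities do not move with $\vy$, so Leibniz in $\vy$ is legitimate and a direct estimate gives $|\partial_\vy^{\valpha}F(\vx_1,\vy)|\leqslant C\,C_{f_1,f_2,f_3}\,|\vy|^{\gamma_1-|\valpha|}$. Restricting to the diagonal $\vy=\vx_1$ shows $F(\vx_1,\vx_1)$ has algebraic singularity of order $\gamma_1$ at $\vx_1=\bm 0$, and \cref{lem:quaderror1} closes the first piece with the bound $m^{-(d+\gamma_1)}$.
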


\begin{proof}
        For this new function, we still split the quadrature error into the two parts as 
        \begin{equation} \label{eqn:error_split4}
                \mathcal{E}_{V\times V}(f, \mathcal{X}\times \mathcal{X}) 
                 = 
                \mathcal{E}_{V}\left(\int_V \ud\vx_2 f(\cdot, \vx_2), \mathcal{X}\right) 
                +
                \dfrac{|V|}{m^d}\sum_{\vx_1 \in \mathcal{X}} 
                \mathcal{E}_{V}\left(f(\vx_1, \cdot), \mathcal{X}\right). 
        \end{equation}  

        In the first part of the quadrature error, $\int_V \ud\vx_2 f(\vx_1, \vx_2)$ as a function of $\vx_1$ is periodic with respect to $V$
        and below we first show it to be smooth everywhere except at $\vx_1 = \bm{0}$ with order $\gamma_1$.

        In order to exploit the existing nonsmoothness analysis in \cref{lem:nonsmooth_integral}, we define an auxiliary function
        \[
                F(\vx_1, \vy) = \int_{V}\ud\vx_2 f_1(\vy, \vx_2) f_2(\vy, \vx_2) f_3(\vy, \vx_2 - \vx_1),
        \]
        which satisfies that $\int_V \ud \vx_2 f(\vx_1, \vx_2) = F(\vx_1, \vx_1)$.
        First fixing $\vy$, the integrand for $F(\vx_1, \vy)$ as a function of $\vx_1, \vx_2$ meets the condition in \cref{lem:nonsmooth_integral} and thus 
        $F(\vx_1, \vy)$ is smooth everywhere with respect to $\vx_1 \in V$ except at $\vx_1 = \bm{0}$ with order $0$. 
        Next, fix $\vx_1$ and consider a point $\vy_0\neq\bm{0}\in V$. 
        It can be verified that there is an open domain containing $\vy_0$ where with any $\vy$ in this domain the integrand is smooth with respect to $\vx_2$ except at $\vx_2 = \bm{0}$ and $\vx_2 = \vx_1$, 
        and the absolute integrand is bounded by $C|\vx_2|^{\gamma_2}$ (from the boundedness of $f_1,f_3$ and the algebraic singularity of $f_2$) which is integrable in $V$. 
        This meets the condition for the Leibniz integral rule around $\vy_0$ and thus $F(\vx_1, \vy)$ is smooth with respect to $\vy\in V$ except at $\vy = \bm{0}$. 
        These two discussions thus show that $F(\vx_1, \vy)$ is smooth at any points with $\vx_1 \neq \bm{0}, \vy\neq \bm{0}$. 

        Furthermore, any partial derivative of $F(\vx_1, \vy)$ over $\vy$ in $V\setminus\{\bm{0}\}$ can be estimated as 
        \begin{align*}
                \left| 
                        \partial^{\valpha}_\vy F(\vx_1, \vy) 
                \right|
                &  
                \leqslant 
                C
                \int_{V}\ud\vx_2 
                \sum_{\valpha_1+\valpha_2+\valpha_3 = \valpha}
                \left|
                        \partial^{\valpha_1}_\vy f_1(\vy, \vx_2) 
                        \partial^{\valpha_2}_\vy f_2(\vy, \vx_2) 
                        \partial^{\valpha_3}_\vy f_3(\vy, \vx_2 - \vx_1)
                \right|
                \\
                & \leqslant
                C
                \int_{V}\ud\vx_2 
                \sum_{\valpha_1+\valpha_2+\valpha_3 = \valpha}
                \mathcal{H}_{V\times V, (\bm{0}, \cdot)}^{|\valpha|}(f_1)|\vy|^{\gamma_1 - |\valpha_1|}
                \mathcal{H}_{V\times V, (\cdot, \bm{0})}^{|\valpha|}(f_2)|\vx_2|^{\gamma_2}
                \mathcal{H}_{V\times V, (\cdot, \bm{0})}^{|\valpha|}(f_3)
                \\
                &\leqslant 
                C
                \mathcal{H}_{V\times V, (\bm{0}, \cdot)}^{|\valpha|}(f_1)
                \mathcal{H}_{V\times V, (\cdot, \bm{0})}^{|\valpha|}(f_2)
                \mathcal{H}_{V\times V, (\cdot, \bm{0})}^{|\valpha|}(f_3)
                |\vy|^{\gamma_1 - |\vbeta|}
                \\
                & = C C_{f_1,f_2,f_3, |\valpha|}
                |\vy|^{\gamma_1 - |\vbeta|},
        \end{align*}
        where $C_{f_1, f_2, f_3, |\valpha|}$ denotes the product of the three algebraic singularity prefactors for brevity.  

        Substituting $\vy$ by $\vx_1$ in the above estimate then shows that $F(\vx_1, \vx_1) = \int_V\ud\vx_2f(\vx_1,\vx_2)$ is smooth everywhere except at $\vx_1 = \bm{0}$ with order $\gamma_1$. 
        \cref{lem:quaderror1} then gives  
        \begin{equation}\label{eqn:quaderror4_01}
                \left|
                        \mathcal{E}_{V}\left(\int_V \ud\vx_2 f(\cdot, \vx_2), \mathcal{X}\right) 
                \right| 
                \leqslant C C_{f_1, f_2, f_3, l_1}m^{-(d + \gamma_1)},
                \quad
                \text{with } l_1 = d+\max(1, \gamma_1).
        \end{equation}

        For the second part of the quadrature error with each $\vx_1 \in \mathcal{X}$, $f(\vx_1, \vx_2)$ as a function of $\vx_2$ is periodic and smooth 
        everywhere except at $\vx_2 = \bm{0}$ and $\vx_2 = \vx_1$ with orders $\gamma_2$ and $0$, respectively.
        Applying \cref{lem:quaderror2} and \cref{rem:quaderror2} with the decomposition $f = (f_1f_2)(f_3)$ then shows that 
        \begin{align}
                \left|
                        \mathcal{E}_{V}\left(f(\vx_1, \cdot), \mathcal{X}\right)
                \right|
                & \leqslant 
                C 
                \mathcal{H}_{V, \bm{0}}^{l_2}((f_1f_2)(\vx_1, \cdot))
                \mathcal{H}_{V\times V, (\cdot, \bm{0})}^{l_2}(f_3)
                m^{-(d + \gamma_2)}
                \nonumber \\
                & \leqslant 
                C 
                \mathcal{H}_{V\times V, (\bm{0}, \cdot)}^{l_2}(f_1)
                \mathcal{H}_{V\times V, (\cdot, \bm{0})}^{l_2}(f_2)
                |\vx_1|^{\gamma_1}
                \mathcal{H}_{V\times V, (\cdot, \bm{0})}^{l_2}(f_3)
                m^{-(d + \gamma_2)}
                \nonumber
                \\
                & \leqslant 
                C
                C_{f_1,f_2,f_3,l_2} 
                |\vx_1|^{\gamma_1}
                m^{-(d + \gamma_2)},
                \label{eqn:quaderror4_02}
        \end{align}
        where $l_2 = d + \max(1, \gamma_2)$ and the second inequality is derived as 
        \begin{align*}
                \mathcal{H}_{V, \bm{0}}^{l_2}((f_1f_2)(\vx_1, \cdot))
                & = 
                \max_{
                        |\valpha|\leqslant l_2 
                }
                \left\|
                        \dfrac{\partial^{\valpha}_{\vx_2} \left(f_1(\vx_1, \vx_2) f_2(\vx_1, \vx_2)\right)}{|\vx_2|^{\gamma_2 - |\valpha|}}
                \right\|_{L^\infty(V)}
                \\
                & \leqslant 
                C
                \max_{
                        |\valpha|\leqslant l_2 
                }
                \sum_{\valpha_1 +\valpha_2 = \valpha}
                \left\|
                        \partial^{\valpha_1}_{\vx_2} f_1(\vx_1, \vx_2)
                        \dfrac{ \partial^{\valpha_2}_{\vx_2}f_2(\vx_1, \vx_2)}{|\vx_2|^{\gamma_2 - |\valpha_2|}} |\vx_2|^{|\valpha_1|}
                \right\|_{L^\infty(V)}
                \\
                & \leqslant 
                C 
                \left(
                \mathcal{H}_{V\times V, (\bm{0}, \cdot)}^{l_2}(f_1)
                |\vx_1|^{\gamma_1}
                \right)
                \mathcal{H}_{V\times V, (\cdot, \bm{0})}^{l_2}(f_2).
        \end{align*}
        
        Plugging the two estimates \cref{eqn:quaderror4_01} and \cref{eqn:quaderror4_02} into \cref{eqn:error_split3}, we have 
        \begin{align*}
                \left|
                        \mathcal{E}_{V\times V}(f, \mathcal{X}\times \mathcal{X})         
                \right|
                & \leqslant 
                C
                C_{f_1,f_2,f_3,l_*}
                \left(
                        m^{-(d+\gamma_1)} + \dfrac{|V|}{m^d}\sum_{\vx_1 \in\mathcal{X}} |\vx_1|^{\gamma_1} m^{-(d+\gamma_2)}
                \right)
                \\
                & \leqslant 
                C
                C_{f_1,f_2,f_3,l_*}
                m^{-(d+\min_i\gamma_i)}
        \end{align*}
        with $l_* = d+ \max(1, \gamma_1, \gamma_2)$. 
        This finishes the proof. 
\end{proof}

\begin{rem}\label{rem:quaderror4}
        If we replace $f_i(\vx_1, \vx_2)$ by $f_i(\vx_1, \vx_2, \vy)$ that are smooth and periodic with respect to $\vy\in V_Y$, \cref{lem:quaderror4} can be generalized as 
        \[
                \left|
                        \mathcal{E}_{V\times V}(f(\cdot, \cdot, \vy), \mathcal{X}\times \mathcal{X})
                \right|
                \leqslant 
                C
                C_{f_1,f_2,f_3}
                m^{-(d+\min_i\gamma_i)}, \ \forall \vy \in V_Y,               
        \]
        where prefactor $C_{f_1,f_2,f_3} = 
                \mathcal{H}_{V\times V\times V_Y,(\bm{0},\cdot, \cdot)}^{d+\max(1, \gamma_1,\gamma_2)}(f_1)
                \mathcal{H}_{V\times V\times V_Y,(\cdot,\bm{0},\cdot)}^{d+\max(1, \gamma_1,\gamma_2)}(f_2)
                \mathcal{H}_{V\times V\times V_Y,(\cdot,\bm{0},\cdot)}^{d+\max(1, \gamma_1,\gamma_2)}(f_3)
                $
                applies uniformly across all $\vy \in V_Y$. 
\end{rem}

\section{Proof of Lemma \ref{lem:nonsmooth_integral}: Singularity structure of functions in integral form}\label{app:nonsmooth_integral}
        In this proof, we assume $\gamma_2 = \min_i \gamma_i \leqslant \gamma_1$. 
        Otherwise if $\gamma_2 > \gamma_1$, the target function can be reformulated as
        \begin{equation*}
                F(\vy, \vz) = \int_{V - \vy}\ud\vx f(\vx + \vy, \vx, \vz)
                       = \int_{V}\ud\vx f(\vx + \vy, \vx, \vz),
        \end{equation*}
        using change of variable $\vx\rightarrow \vx + \vy$ and integrand periodicity
        with respect to $\vx \in V$. 
        With this reformulation, the following proof can be applied to $\wt{f}(\vx_1, \vx_2, \vz) = f(\vx_2, \vx_1,\vz)$ with $F(\vy,\vz) = \int_V \ud \vx \wt{f}(\vx, \vx + \vy, \vz)$,
        where the order for the second variable of $\wt{f}$ equals $\gamma_1$ and is the minimum order.

        First we study the smoothness property of $F(\vy, \vz)$ with respect to $\vy\in V$ while fixing $\vz$. 
        For notation brevity, we denote $F(\vy, \vz)$ and $f(\vx_1, \vx_2, \vz)$ by $F_\vz(\vy)$ and $f_\vz(\vx_1, \vx_2)$, respectively, 
        when $\vz$ is assumed to be a fixed point in $V_Z$.
        Consider an arbitrary open ball domain $B_\sigma$ of radius $\sigma$ in $V$ that does not contain $\bm{0}$.
        We can split $F_\vz(\vy)$ for any $\vy \in B_\sigma$ into two parts, 
        \begin{equation}\label{eqn:split_Fy}
                F_\vz(\vy) = \int_{V \setminus B_\sigma}\ud\vx f_\vz(\vx, \vx-\vy) + \int_{B_\sigma}\ud\vx f_\vz(\vx, \vx-\vy), \quad \forall \vy \in B_\sigma. 
        \end{equation}
        
        For the first term, its integrand $f_\vz(\vx, \vx-\vy)$ is smooth at $(\vx, \vy) \in \left(V \setminus (B_\sigma\cup \{\bm{0}\})\right) \times B_\sigma$ and its partial derivatives 
        over $\vy\in B_\sigma$ are integrable over $\vx \in V \setminus B_\sigma$ based on the nonsmoothness characterization in \cref{eqn:singular_behavior}. 
        We thus can use the Leibniz integral rule to prove that the first term is smooth at $\vy \in B_\sigma$ and 
        \[
        \dfrac{\partial^{\valpha}}{\partial \vy^{\valpha}} \int_{V \setminus B_\sigma}\ud\vx f_\vz(\vx, \vx-\vy)
        = \int_{V \setminus B_\sigma}\ud\vx \dfrac{\partial^{\valpha}}{\partial \vy^{\valpha}} f_\vz(\vx, \vx-\vy), 
        \quad \forall \vy \in B_\sigma.
        \]
        
        For the second term with any $\vy \in B_\sigma$, we introduce a small perturbation $\delta\vy$ to $\vy$ such that $\vy + \delta \vy \in B_\sigma$ and 
        $B_\sigma + \delta\vy$ does not contain $\bm{0}$, 
        and consider the difference between the second term evaluated at $\vy + \delta\vy$ and $\vy$ as
        \begin{align*}
                & \int_{B_\sigma}\ud\vx f_\vz(\vx, \vx- \vy - \delta\vy) -  \int_{B_\sigma}\ud\vx f_\vz(\vx, \vx- \vy)
                \\
                = & 
                \int_{B_\sigma}\ud\vx 
                \Big(
                f_\vz(\vx - \delta\vy, \vx- \vy - \delta\vy) -   f_\vz(\vx, \vx- \vy)
                +
                f_\vz(\vx, \vx- \vy - \delta\vy)  - f_\vz(\vx - \delta\vy, \vx- \vy - \delta\vy)
                \Big)
                \\
                = & 
                \left(
                \int_{B_\sigma -\delta\vy}\ud\vx- \int_{B_\sigma}\ud\vx
                \right)f_\vz(\vx, \vx- \vy)
                +  \int_{B_\sigma - \delta\vy}\ud\vx \left(f_\vz(\vx+\delta\vy, \vx- \vy)  - f_\vz(\vx, \vx- \vy)\right)
                \\
                = & \int_{\partial B_\sigma}\ud S f_\vz(\vx, \vx-\vy) (-\delta\vy) \cdot \vn(\vx) + \Or(|\delta\vy|^2)
                + \int_{B_\sigma - \delta\vy}\ud\vx \delta\vy \cdot \nabla_1f_\vz(\vx, \vx-\vy) + \Or(|\delta\vy|^2)
                \\
                = & \delta\vy \cdot \left(
                - 
                \int_{\partial B_\sigma}\ud S f_\vz(\vx, \vx-\vy) \vn(\vx) 
                +
                \int_{B_\sigma}\ud\vx \nabla_1f_\vz(\vx, \vx-\vy) 
                \right)
                + \Or(|\delta\vy|^2),
        \end{align*}
        where $\nabla_1 f_\vz(\cdot, \cdot)$ denotes the gradient of $f$ over its first variable and similarly for notation $\partial_1^\valpha f(\cdot,\cdot)$ in later use.
        This calculation shows that the second term in \cref{eqn:split_Fy} is continuous at any $\vy \in B_\sigma$ up to first order derivatives and its gradient equals 
        to the term in the parenthesis above. 

        Putting the above analysis for the two terms in \cref{eqn:split_Fy} together, we have
        \begin{equation}\label{eqn:gradient_Fy}
                \nabla F_\vz(\vy)
                = 
                \int_{V \setminus B_\sigma}\ud\vx \nabla_{\vy} f_\vz(\vx, \vx-\vy)
                - 
                \int_{\partial B_\sigma}\ud S f_\vz(\vx, \vx-\vy)  \vn(\vx) 
                +
                \int_{B_\sigma}\ud\vx \nabla_1f_\vz(\vx, \vx-\vy), \quad \vy \in B_\sigma.
        \end{equation}
        It is worth noting that the above smoothness analysis and the gradient calculation in \cref{eqn:gradient_Fy} work for any open domain $B_\sigma \subset V$ that does not contain $\bm{0}$. 
        Thus, the analysis above shows the $F_\vz(\vy)$ is continuous up to first order derivative at any $\vy \in V\setminus\{\bm{0}\}$.

        In \cref{eqn:gradient_Fy}, the integrands of the first two terms are smooth at any $\vy\in B_\sigma$ as $\vx\not\in B_\sigma$. 
        These two integrals are thus smooth with respect to $\vy\in B_\sigma$ according to the Leibniz integral rule. 
        For the third term, each entry in $\nabla_1f_\vz(\vx_1, \vx_2)$ shares similar nonsmooth behavior as $f_\vz(\vx_1, \vx_2)$ described 
        in \cref{eqn:singular_behavior} only with $\gamma_1$ changed to $\gamma_1-1$.  
        Due to this similarity, we can use the same analysis for $\int_{V}\ud\vx f_\vz(\vx, \vx - \vy)$ above to prove the continuity up to first order derivatives for the third term 
        at any $\vy \in V \setminus\{\bm{0}\}$. 
        Recursively applying this analysis, we then prove that $F_\vz(\vy)$ is smooth in $B_\sigma$ and thus in $V\setminus\{\bm{0}\}$.

        Now taking $\vz$ back into account, the above analysis shows that $F(\vy, \vz)$ is smooth with respect to $\vy \in V \setminus\{\bm{0}\}$ for any 
        fixed $\vz \in V_Z$. 
        On the other hand, with any fixed $\vy$, $f(\vx, \vx-\vy, \vz)$ is smooth with respect to $\vz\in V_Z$, has two nonsmooth points $\vx = \bm{0}$ and $\vx = \vy$ with 
        respect to $\vx$, and its partial derivatives over $\vz$ are integrable over $\vx \in V$ according to the assumption in \cref{eqn:singular_behavior}. 
        This meets the condition for the Leibniz integral rule and thus $F(\vy, \vz)$ is smooth with respect to $\vz\in V_Z$ for any fixed $\vy$. 
        Based on these two partial smoothness properties, we have $F(\vy, \vz)$ to be smooth everywhere in $V\times V_Z$ except at $\vy = \bm{0}$.
        
        Lastly, we characterize the algebraic singularity of $F(\vy, \vz)$ at $\vy = \bm{0}$ by proving that there exists constants $\{C_{\valpha, \vbeta}\}$ such 
        that 
        \begin{equation}\label{eqn:lem_nonsmooth_02}
                \left|
                        \partial_\vy^\valpha \partial_\vz^\vbeta F(\vy, \vz)
                \right|
                \leqslant 
                C_{\valpha, \vbeta} |\vy|^{\gamma_2 - |\valpha|}, 
                \qquad \forall \vy \in V\setminus\{\bm{0}\}, \vz \in V_Z, \forall \valpha, \vbeta\geqslant \bm{0}.
        \end{equation}
        Consider any $\vy\in V\setminus\{\bm{0}\}$ and choose the ball domain $B_\sigma$ centered at $\vy$ with radius $\sigma = |\vy|/2$.
        Using the Leibniz integral rule over variable $\vz$, \cref{eqn:gradient_Fy} can be generalized to provide the first-order partial derivatives of $\partial_\vz^\vbeta F(\vy, \vz)$ 
        over $\vy$ as
        \begin{align}
                \partial_\vy^\valpha \partial_\vz^\vbeta F(\vy, \vz)
                & = 
                \int_{V \setminus B_\sigma}\ud\vx \partial_\vy^\valpha \partial_\vz^\vbeta f(\vx, \vx-\vy, \vz)
                - 
                \int_{\partial B_\sigma}\ud S \partial_\vz^\vbeta f(\vx, \vx-\vy, \vz)(\valpha\cdot\vn(\vx)) 
                \nonumber\\
                & \qquad 
                +
                \int_{B_\sigma}\ud\vx \partial_1^\valpha \partial_\vz^\vbeta f(\vx, \vx-\vy, \vz), 
                \qquad \forall \vy \in B_\sigma, \vz \in V_Z, \forall |\valpha| = 1,\vbeta\geqslant \bm{0}.
                \label{eqn:gradient_Fy2}
        \end{align}
        These three terms can be estimated separately based on the assumption in \cref{eqn:singular_behavior} as
        \begin{align*}
                \left|
                \int_{V \setminus B_\sigma}\ud\vx \partial_\vy^\valpha \partial_\vz^\vbeta f(\vx, \vx-\vy, \vz)
                \right|
                & 
                \leqslant C
                \int_{V \setminus B_\sigma}\ud\vx |\vx|^{\gamma_1} |\vx - \vy|^{\gamma_2 - 1}
                \\
                & 
                \leqslant C
                \int_{V \setminus B_\sigma}\ud\vx |\vx|^{\gamma_1} |\sigma|^{\gamma_2 - 1}    
                \leqslant 
                C |\vy|^{\gamma_2 - 1}, 
                \\
                \left|
                \int_{\partial B_\sigma}\ud S \partial_\vz^\vbeta f(\vx, \vx-\vy, \vz)(\valpha\cdot\vn(\vx)) 
                \right|
                & 
                \leqslant 
                C
                \int_{\partial B_\sigma}\ud S |\vx|^{\gamma_1} |\vx - \vy|^{\gamma_2}
                \\
                & 
                \leqslant 
                C |\vy|^{\gamma_1} |\partial B_\sigma| \sigma^{\gamma_2} 
                \leqslant 
                C |\vy|^{\gamma_1 + \gamma_2 + d-1},
                \\
                \left|
                        \int_{B_\sigma}\ud\vx \partial_1^\valpha \partial_\vz^\vbeta f(\vx, \vx-\vy, \vz)
                \right|
                & 
                \leqslant 
                C
                \int_{B_\sigma}\ud\vx |\vx|^{\gamma_1 - 1}|\vx - \vy|^{\gamma_2}
                \\
                & 
                \leqslant 
                C
                |\sigma|^{\gamma_1 - 1}\int_{B_\sigma}|\vx - \vy|^{\gamma_2}
                \leqslant 
                C |\vy|^{\gamma_1 + \gamma_2 + d - 1},
        \end{align*}
        where all constants $C$'s depend on $\valpha, \vbeta$, and the corresponding prefactors in \cref{eqn:singular_behavior}.
        Using the assumption $\gamma_1 \geqslant -d + 1$, these estimates together show that the algebraic singularity
        characterization in \cref{eqn:lem_nonsmooth_02} for $F(\vy, \vz)$ holds true for all $|\valpha| = 1$. 

        Next we prove \cref{eqn:lem_nonsmooth_02} for all $|\valpha| = 2$. 
        For any $|\valpha| = 2$, decompose $\valpha = \valpha_1 + \valpha_2$ with 
        $|\valpha_1| = |\valpha_2| = 1$ and accordingly $\partial_\vy^{\valpha}\partial^\vbeta_\vz F(\vy, \vz)$ into $\partial_\vy^{\valpha_1}\left(\partial_\vy^{\valpha_2}\partial^\vbeta_\vz F(\vy, \vz)\right)$.
        Noting that $\partial_\vy^{\valpha_2}\partial^\vbeta_\vz F(\vy, \vz)$ can be expanded into three terms in \cref{eqn:gradient_Fy2} at any $\vy \neq \bm{0}$, 
        we consider the outer partial derivative $\partial_\vy^{\valpha_1}$ applied to each of the three terms. 
        The partial derivatives $\partial_\vy^{\valpha_1}$ over the first two terms in \cref{eqn:gradient_Fy2} can be estimated directly as 
        \begin{align*}
                \left|
                        \partial_\vy^{\valpha_1}
                        \int_{V \setminus B_\sigma}\ud\vx \partial_\vy^{\valpha_2} \partial_\vz^\vbeta f(\vx, \vx-\vy, \vz)
                \right|
                & 
                = 
                \left|
                        \int_{V \setminus B_\sigma}\ud\vx \partial_\vy^{\valpha_1 + \valpha_2} \partial_\vz^\vbeta f(\vx, \vx-\vy, \vz)
                \right|
                \\
                & \leqslant 
                C
                \int_{V \setminus B_\sigma}\ud\vx |\vx|^{\gamma_1} |\vx - \vy|^{\gamma_2 - 2}
                \leqslant 
                C |\vy|^{\gamma_2 - 2}, 
                \\
                \left|
                        \partial_\vy^{\valpha_1}
                        \int_{\partial B_\sigma}\ud S \partial_\vz^\vbeta f(\vx, \vx-\vy, \vz)(\valpha_2\cdot\vn(\vx)) 
                \right|
                & 
                =
                \left|
                        \int_{\partial B_\sigma}\ud S \partial_\vy^{\valpha_1}\partial_\vz^\vbeta f(\vx, \vx-\vy, \vz)(\valpha_2\cdot\vn(\vx)) 
                \right|
                \\
                &
                \leqslant 
                C
                \int_{\partial B_\sigma}\ud S |\vx|^{\gamma_1} |\vx - \vy|^{\gamma_2 - |\valpha_1|}
                \leqslant 
                C |\vy|^{\gamma_1 + \gamma_2 + d-2}.
        \end{align*}

        For the partial derivative $\partial_\vy^{\valpha_1}$ over the third term in \cref{eqn:gradient_Fy2}, note that $\partial_1^{\valpha_2} f(\vx_1, \vx_2, \vz)$ has similar 
        nonsmooth behavior as $f(\vx_1, \vx_2, \vz)$ described in \cref{eqn:singular_behavior} only with $\gamma_1$ changed to $\gamma_1-1$.  
        We thus can apply the above derivative estimate in \cref{eqn:lem_nonsmooth_02} with $|\valpha| = 1$ for $\partial_\vy^{\valpha} \partial_\vz^{\vbeta}\int_{V}\ud\vx f(\vx, \vx - \vy, \vz)$ 
        to estimate $\partial_\vy^{\valpha_1} \partial_\vz^{\vbeta}\int_{B_\sigma}\ud\vx \partial_1^{\valpha_2}f(\vx, \vx - \vy, \vz)$ and obtain that 
        \[
        \left|
                \partial_\vy^{\valpha_1} \partial_\vz^{\vbeta}\int_{B_\sigma}\ud\vx \partial_1^{\valpha_2}f(\vx, \vx - \vy, \vz)
        \right|
        \leqslant C |\vy|^{\gamma_1  + \gamma_2 + d - 2}.
        \]

        Combining the above estimates of the partial derivatives $\partial_\vy^{\valpha_1}$ over the three terms in \cref{eqn:gradient_Fy2} together then proves the 
        algebraic singularity characterization in \cref{eqn:lem_nonsmooth_02} with $|\valpha| = 2$. 
        Recursively applying the above estimates, we can validate \cref{eqn:lem_nonsmooth_02} for all derivative orders $\valpha \geqslant \bm{0}$
        and thus prove that $F(\vy, \vz)$ is smooth everywhere in $V\times V_Z$ except at $\vy = \bm{0}$ with order $\gamma_2$. 
        This finishes the proof.
\end{appendices}

\end{document}